\date{}
\title[Fractional Ornstein-Uhlenbeck operators]{Smoothing properties of fractional Ornstein-Uhlenbeck semigroups and null-controllability}
\author{Paul Alphonse}
\address{Paul Alphonse, Univ Rennes, CNRS, IRMAR - UMR 6625, F-35000 Rennes}
\email{paul.alphonse@ens-rennes.fr}
\author{Joackim Bernier}
\address{Joackim Bernier, Univ Rennes, CNRS, IRMAR - UMR 6625, F-35000 Rennes}
\email{joackim.bernier@ens-rennes.fr}
\keywords{Fractional Ornstein-Uhlenbeck operators, Kalman rank condition, Gevrey regularity, Null-controllability, Subelliptic estimates}
\subjclass[2010]{93B05, 35B65, 35H20}
\numberwithin{equation}{section}
\newtheorem{thm}{Theorem}[section]
\newtheorem{prop}[thm]{Proposition}
\newtheorem{lem}[thm]{Lemma}
\newtheorem{cor}[thm]{Corollary}
\theoremstyle{definition}
\newtheorem{dfn}[thm]{Definition}
\newtheorem{ex}[thm]{Example}
\newtheorem{rk}[thm]{Remark}
\DeclareMathOperator{\Supp}{Supp}
\DeclareMathOperator{\Reelle}{Re}
\DeclareMathOperator{\Tr}{Tr}
\DeclareMathOperator{\Ker}{Ker}
\DeclareMathOperator{\Ran}{Ran}
\DeclareMathOperator{\Rank}{Rank}
\DeclareMathOperator{\Lie}{Lie}
\begin{document}

\sloppy

\selectlanguage{english}

\begin{abstract} We study fractional hypoelliptic Ornstein-Uhlenbeck operators acting on $L^2(\mathbb{R}^n)$ satisfying the Kalman rank condition. We prove that the semigroups generated by these operators enjoy Gevrey regularizing effects. Two byproducts are derived from this smoothing property. On the one hand, we prove the null-controllability in any positive time from thick control subsets of the associated parabolic equations posed on the whole space. On the other hand, by using interpolation theory, we get global $L^2$ subelliptic estimates for the these operators.
\end{abstract}

\maketitle

\section{Introduction}
\label{lab_intro}

\subsection{Motivation} Given $s>0$ a positive real number, $B = (B_{i,j})_{1\le i,j\le n}$ and $Q=(Q_{i,j})_{1\le i,j\le n}$ real $n\times n$ matrices, with $Q$ symmetric positive semidefinite, we aim in this work at studying the fractional Ornstein-Uhlenbeck operator
\begin{gather}\label{06022018E3}
	\mathcal{P} = \frac{1}{2}\Tr^s(-Q\nabla^2_x) + \langle Bx,\nabla_x\rangle,\quad x\in\mathbb{R}^n,
\end{gather}
equipped with the domain
\begin{equation}\label{19062018E1}
	D(\mathcal{P}) = \big\{u\in L^2(\mathbb{R}^n) : \mathcal{P}u\in L^2(\mathbb{R}^n)\big\}.
\end{equation}
This operator is composed of $\Tr^s(-Q\nabla^2_x)$ the Fourier multiplier whose symbol is $\langle Q\xi,\xi\rangle^s$, where $\langle\cdot,\cdot\rangle$ stands for the canonical Euclidean scalar product on $\mathbb{R}^n$, and $\langle Bx,\nabla_x\rangle$ the differential operator defined by
$$\langle Bx,\nabla_x\rangle = \sum_{i=1}^n\sum_{j=1}^nB_{i,j}x_j\partial_{x_i}.$$
Under an algebraic condition on $B$ and $Q^{\frac{1}{2}}$ (the symmetric positive semidefinite matrix given by the square root of $Q$), we investigate the regularizing effects of the semigroup $(e^{-t\mathcal{P}})_{t\geq0}$ generated by $\mathcal{P}$ on $L^2(\mathbb{R}^n)$, the null-controllability of the parabolic equation associated to $\mathcal{P}$ and the global $L^2$ subelliptic properties enjoyed by $\mathcal{P}$. This algebraic condition is the so-called Kalman rank condition
\begin{align}\label{10052018E4}
	\Rank[B\ \vert\ Q^{\frac{1}{2}}] = n,
\end{align}
where 
$$[B\ \vert\ Q^{\frac{1}{2}}] = [Q^{\frac{1}{2}},BQ^{\frac{1}{2}},\ldots,B^{n-1}Q^{\frac{1}{2}}],$$
is the $n\times n^2$ matrix obtained by writing consecutively the columns of the matrices $B^jQ^{\frac{1}{2}}$. Equivalently, $B$ and $Q^{\frac{1}{2}}$ satisfy the Kalman rank condition when there exists a non-negative integer $0\le r\le n-1$ satisfying
\begin{equation}\label{05052018E4}
	\Ker(Q^{\frac{1}{2}})\cap\Ker(Q^{\frac{1}{2}}B^T)\cap\ldots\cap\Ker(Q^{\frac{1}{2}}(B^T)^r) = \{0\}.
\end{equation}
This equivalence is proved in Lemma \ref{29082018E1} in Appendix.

A particular case of fractional Ornstein-Uhlenbeck operator is the fractional Kolmogorov operator
$$\mathcal{P} = v\cdot\nabla_x + (-\Delta_v)^s,\quad (x,v)\in\mathbb{R}^{2n},$$
where $(-\Delta_v)^s$ is the Fourier multiplier of symbol $|\eta|^{2s}$, with $\eta\in\mathbb R^n$ the dual variable of $v\in\mathbb R^n$, obtained for
\begin{equation}\label{08062018E3}
	B = \begin{pmatrix}
	0_n & I_n \\
	0_n & 0_n
\end{pmatrix}\quad \text{and}\quad Q = 2^{\frac{1}{s}}\begin{pmatrix}
	0_n & 0_n \\
	0_n & I_n
\end{pmatrix}.
\end{equation}
Note that, here, implicitly, to be consistent with the definition of the Ornstein-Uhlenbeck operators in \eqref{06022018E3}, we have realized the change of notations $x\leftarrow (x,v)$ and $n\leftarrow 2n$.

This operator plays a substantial role in kinetic theory since the fractional Kolmogorov equation
$$\left\{\begin{array}{l}
	\partial_t u(t,x,v) + v\cdot\nabla_xu(t,x,v) + (-\Delta_v)^su(t,x,v) = 0 ,\quad t>0,\ (x,v)\in\mathbb{R}^{2n}, \\[5pt]
	u(0) = u_0\in L^2(\mathbb{R}^n),
\end{array}\right.$$
where $0<s<1$, turns out to be a simplified model of the linearized spatially inhomogeneous non-cutoff Boltzmann equation. We refer the reader e.g. to \cite{MR2679369, MR2784329, MR3348825, MR1942465} for extensive discussions about this topic.

The fractional Ornstein-Uhlenbeck operators also appear in stochastic theory. Considering the stochastic differential equation in $\mathbb{R}^n$,
$$\left\{\begin{array}{l}
	dX_t = BX_t\ dt + Q^{\frac{1}{2}}dN_t, \\[5pt]
	X_0 = x\in\mathbb{R}^n,
\end{array}\right.$$
where $N_t$ stands for a $2s$-stable L\'evy process, the fractional Ornstein-Uhlenbeck semigroup is the transition semigroup of the process $(X_t)_{t\geq0}$, see e.g. Examples 1.3.14 and 3.3.8 in \cite{MR2512800}.

In the rest of the introduction, we denote by
\begin{equation}\label{05052018E1}
	\mathcal{L} = -\frac{1}{2}\Tr(Q\nabla^2_x) + \langle Bx,\nabla_x\rangle
\end{equation}
the usual Ornstein-Uhlenbeck operator, corresponding to the case when $s = 1$. These operators acting on Lebesgue spaces have been very much studied in the last two decades. The structure of these operators was analyzed in \cite{MR1289901}, while their spectral properties were investigated in \cite{MR1941990,  MR3342487}. The smoothing properties of the associated semigroups were studied in \cite{MR2505366, MR2257846, MR3710672,MR1389786,MR1941990, MR3342487} and some global hypoelliptic estimates were derived in \cite{MR2729292, MR2257846, MR3710672, MR3342487}. We also refer the reader to \cite{MR1343161, MR1475774} where the operator $\mathcal L$ is studied while acting on spaces of continuous functions. We recall from these works that the hypoellipticity of the Ornstein-Uhlenbeck operator $\mathcal L$ is characterized by the following equivalent assertions : 
\begin{enumerate}[label=\textbf{\arabic*.},leftmargin=* ,parsep=2pt,itemsep=0pt,topsep=2pt]
\item The Ornstein-Uhlenbeck operator $\mathcal L$ is hypoelliptic.
\item The symmetric positive semidefinite matrices 
\begin{equation}\label{27082018E2}
	Q_t = \int_0^te^{-sB}Qe^{-sB^T}\ ds,
\end{equation}
are nonsingular for some (equivalently, for all) $t>0$, i.e. $\det Q_t>0$.
\item The Kalman rank condition \eqref{10052018E4} holds.
\item The H\"{o}rmander condition holds : $$\forall x\in\mathbb{R}^n,\quad \dim\Lie(X_1,X_2,\ldots,X_n,Y_0)(x) = n,$$
with $$Y_0 = \langle Bx,\nabla_x\rangle,\quad X_i = \sum_{j=1}^nq_{i,j}\partial_{x_j},\quad i=1,\ldots,n,$$
where $\Lie(X_1,X_2,\ldots,X_n,Y_0)(x)$ denotes the Lie algebra generated by the vector fields 
$$X_1,X_2,\ldots,X_n\quad \text{and}\quad Y_0,$$
at point $x\in\mathbb{R}^n$. 
\end{enumerate}

\subsection{Regularizing effects of semigroups} First, we derive an explicit formula for the semigroup generated by $\mathcal{P}$ on $L^2(\mathbb{R}^n)$. The case of the hypoelliptic Ornstein-Uhlenbeck operator $\mathcal{L}$ is treated by Kolmogorov in \cite{MR1503147}, where he proves that the semigroup $(e^{-t\mathcal{L}})_{t\geq0}$ generated by $\mathcal{L}$ has the following explicit representation :
$$e^{-t\mathcal{L}}u(x) = \frac{1}{(2\pi)^{\frac{n}{2}}\sqrt{\det Q_t}}\int_{\mathbb{R}^n}e^{-\frac{1}{2}\langle (Q_t)^{-\frac{1}{2}}y,y\rangle}u(e^{-tB}x-y)dy,$$
when $t>0$, where the symmetric positive semidefinite matrices $Q_t$ are defined in \eqref{27082018E2}. Since $e^{-t\mathcal{L}}u$ is given by a convolution, it follows from the properties of the Fourier transform that the above formula writes as
\begin{equation}\label{04052018E3}
	\widehat{e^{-t\mathcal{L}}u}(\xi)= e^{\Tr(B)t}\exp\left[-\frac{1}{2}\int_0^t\vert Q^{\frac{1}{2}}e^{\tau B^T}\xi\vert^2\ d\tau\right]\widehat{u}(e^{tB^T}\xi).
\end{equation}
In this work, without any assumption on $B$ and $Q^{\frac{1}{2}}$, we prove (after studying its basic properties) that the operator $\mathcal P$ generates a strongly continuous semigroup on $L^2(\mathbb{R}^n)$ and we derive an explicit formula for its Fourier transform, extending \eqref{04052018E3} :

\begin{thm}\label{06022018T1} The fractional Ornstein-Uhlenbeck operator $\mathcal{P}$ defined in \eqref{06022018E3} and equipped with the domain \eqref{19062018E1} generates a strongly continuous semigroup $(e^{-t\mathcal{P}})_{t\geq0}$ on $L^2(\mathbb{R}^n)$ which satisfies that for all $t\geq0$ and $u\in L^2(\mathbb{R}^n)$,
$$\Vert e^{-t\mathcal{P}}u\Vert_{L^2(\mathbb{R}^n)}\le e^{\frac{1}{2}\Tr(B)t} \Vert u\Vert_{L^2(\mathbb{R}^n)},$$
and
$$\widehat{e^{-t\mathcal{P}}u} = e^{\Tr(B)t}\exp\left[-\frac{1}{2}\int_0^t\vert Q^{\frac{1}{2}}e^{\tau B^T}\cdot\vert^{2s}\ d\tau\right]\widehat{u}(e^{tB^T}\cdot).$$
\end{thm}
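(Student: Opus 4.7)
The plan is to work on the Fourier side, where $\mathcal{P}$ takes a tractable form. For $u\in\mathscr{S}(\mathbb{R}^n)$, the multiplier $\tfrac12\Tr^s(-Q\nabla_x^2)$ corresponds to multiplication by $\tfrac12|Q^{1/2}\xi|^{2s}$, while $\langle Bx,\nabla_x\rangle$ transforms, after integration by parts, into $-\langle B^T\xi,\nabla_\xi\cdot\rangle-\Tr(B)$. Hence the Cauchy problem $(\partial_t+\mathcal{P})v=0$ with $v(0)=u$ becomes, setting $w(t,\xi)=\widehat{v(t)}(\xi)$,
\begin{equation*}
\partial_t w-\langle B^T\xi,\nabla_\xi w\rangle+\Big(\tfrac12|Q^{1/2}\xi|^{2s}-\Tr(B)\Big)w=0,
\end{equation*}
a first-order linear transport equation with a multiplicative source. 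Integrating along the characteristic flow $\dot\xi=-B^T\xi$, and re-parametrising time, yields precisely the formula announced in the theorem.

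Guided by this heuristic, I would define $S(t):L^2(\mathbb{R}^n)\to L^2(\mathbb{R}^n)$, for $t\ge 0$, through the asserted Fourier-side formula, and then verify in order that it is well defined and bounded, that $(S(t))_{t\ge 0}$ is a semigroup, that it is strongly continuous, and finally that its generator equals $-\mathcal{P}$ with the domain \eqref{19062018E1}. The norm bound is immediate from Plancherel: the modulus of the exponential factor is at most $e^{\Tr(B)t}$, whereas the substitution $\xi\mapsto e^{tB^T}\xi$ has Jacobian determinant $e^{t\Tr(B)}$, so $\|\widehat{u}(e^{tB^T}\cdot)\|_{L^2}=e^{-\Tr(B)t/2}\|u\|_{L^2}$; combining both factors gives the stated estimate $\|S(t)u\|_{L^2}\le e^{\Tr(B)t/2}\|u\|_{L^2}$. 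The semigroup law $S(t)S(s)=S(t+s)$ reduces, after expansion, to the translation identity
\begin{equation*}
\int_0^t|Q^{1/2}e^{\tau B^T}e^{sB^T}\xi|^{2s}\,d\tau+\int_0^s|Q^{1/2}e^{\tau B^T}\xi|^{2s}\,d\tau=\int_0^{t+s}|Q^{1/2}e^{\tau B^T}\xi|^{2s}\,d\tau,
\end{equation*}
while strong continuity at $t=0$ follows from pointwise convergence of the Fourier-side formula on Schwartz data together with dominated convergence, and is extended to all of $L^2(\mathbb{R}^n)$ by density and the uniform bound on $\|S(t)\|$ over compact time intervals.

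The delicate step, which I expect to be the main obstacle, is the identification of the generator of $(S(t))_{t\ge 0}$ with $-\mathcal{P}$ on the domain $D(\mathcal{P})$ prescribed by \eqref{19062018E1}. Differentiating the Fourier formula at $t=0$ shows that, for every $u\in\mathscr{S}(\mathbb{R}^n)$, the limit $\lim_{t\to 0^+}t^{-1}(S(t)u-u)=-\mathcal{P}u$ holds in $L^2(\mathbb{R}^n)$, so $\mathscr{S}(\mathbb{R}^n)$ is contained in the domain $D(A)$ of the generator $-A$, on which $A$ acts as $\mathcal{P}$. The inclusion $D(A)\subset D(\mathcal{P})$ then follows by passing to the distributional sense: if $u\in D(A)$, the identity $Au=\mathcal{P}u$ valid on Schwartz extends to $u$ by duality, so $\mathcal{P}u=Au\in L^2(\mathbb{R}^n)$ and thus $u\in D(\mathcal{P})$. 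The reverse inclusion $D(\mathcal{P})\subset D(A)$ is a \emph{maximal equals minimal} assertion: every $u\in D(\mathcal{P})$ must be approximable in the graph norm by Schwartz functions. I would establish this by combining a convolution in physical space with a frequency truncation, and then estimating the commutators with the Fourier multiplier $\tfrac12|Q^{1/2}\xi|^{2s}$ and with the transport vector field $\langle Bx,\nabla_x\rangle$. The second commutator is the delicate one because the coefficients $Bx$ are unbounded, which forces a careful matching between the spatial truncation scale and the mollification parameter. Once this approximation is secured, closedness of $A$ yields $u\in D(A)$ and completes the identification.
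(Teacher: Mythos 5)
Your proposal is correct and its skeleton coincides with the paper's: define the semigroup directly by the Fourier-side formula, get the bound $e^{\frac12\Tr(B)t}$ from Plancherel and the Jacobian $e^{t\Tr(B)}$ of $\xi\mapsto e^{tB^T}\xi$, check the semigroup law by the translation identity for the integral in the exponent, prove strong continuity on Schwartz data and extend by density, then identify the generator with $-\mathcal{P}$ on the maximal domain, the hard point being graph-density of $\mathscr{S}(\mathbb{R}^n)$ in $D(\mathcal{P})$ (spatial cutoff plus frequency truncation and commutator estimates), followed by closedness of the generator. The one place where you genuinely diverge is the inclusion $D(A)\subset D(\mathcal{P})$: the paper first proves, via the graph-density result, the adjoint formula and accretivity of $\mathcal{P}+\frac12\Tr(B)$ and of its adjoint, invokes Lumer--Phillips to know that $-\mathcal{P}$ itself generates a semigroup, and then gets $D(A)\subset D(\mathcal{P})$ from bijectivity of $A-\mu$ and $-\mathcal{P}-\mu$ for $\mu>\frac12\Tr(B)$; you instead obtain it by a duality argument, extending the identity $A=-\mathcal{P}$ from Schwartz functions to $u\in D(A)$ in the distributional sense. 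Your route is leaner for this theorem (no Lumer--Phillips, no accretivity), but to make the duality step airtight you must still verify that $t^{-1}(S(t)^*\varphi-\varphi)\to-\mathcal{P}^*\varphi$ in $L^2$ for Schwartz $\varphi$ (the same dominated-convergence computation applied to the explicit adjoint semigroup), which is essentially the integration-by-parts input the paper packages into its adjoint computation; note also that the paper's accretivity and adjoint statements are not wasted effort, since they are reused later (for the HUM reduction and for maximal accretivity in the subelliptic section). One small correction of emphasis in your sketch of the graph-density step: the scale-matching constraint does not come from the transport commutator (which is handled by homogeneity and dominated convergence with no relation between the two scales), but from the commutator with the fractional multiplier when $2s>1$, where the paper needs the frequency-truncation exponent $\alpha$ to satisfy $(2s-1)\alpha<1$.
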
 

A natural question is then to investigate the regularizing properties of this semigroup. In this direction, Y. Morimoto and C.J. Xu proved in \cite{MR2523694} that any solution $u$ of the fractional Kolmogorov equation 
$$\left\{\begin{array}{l}
	\partial_tu(t,x,v) + v\cdot(\nabla_xu)(t,x,v) + (-\Delta_v)^su(t,x,v) = 0,\quad t>0,\ (x,v)\in\mathbb{R}^{2n}, \\[5pt]
	u(0) = u_0\in L^2(\mathbb{R}^{2n}),
\end{array}\right.$$
belongs to the Gevrey type space $G^{\frac{1}{2s}}(\mathbb{R}^{2n})$ for any time $t>0$. In all this paper, we denote by $G^{\frac{1}{2s}}(\mathbb{R}^n)$ the space of regular functions $u\in C^{\infty}(\mathbb{R}^n)$ satisfying
\begin{equation}\label{31072018E1}
	\exists c>1,\forall\alpha\in\mathbb{N}^n,\quad \Vert \partial^{\alpha}_xu(x)\Vert_{L^2(\mathbb{R}^n)}\le c^{1+\vert\alpha\vert}\ (\alpha!)^{\frac{1}{2s}}.
\end{equation}
Note that, quite often, Gevrey regularity is defined locally in space, and not uniformly as in this work. We refer the reader to \cite{MR1249275} for the basics about Gevrey regularity. Here, we generalize this result by proving that the semigroup generated by $\mathcal{P}$ enjoys similar smoothing properties, and we derive a sharp control of the associated seminorms. 

In our context, this regularizing effect is anisotropic and the characteristic directions are given by $(V_k)_{k\geq 0}$ the sequence of nested vector spaces
\begin{equation}\label{01062018E2}
	V_k = \Ran(Q^{\frac12})+\Ran(BQ^{\frac12})+\ldots+\Ran(B^kQ^{\frac12})\subset\mathbb{R}^n,\quad k\geq0,
\end{equation}
where the notation $\Ran$ denotes the range. Assuming that the Kalman rank condition \eqref{10052018E4} holds, we consider $0\le r\le n-1$ the smallest integer satisfying \eqref{05052018E4}. We observe from \eqref{01062018E2} that the following strict inclusions hold :
\begin{equation}\label{30082018E6}
	V_0\subsetneq V_1\subsetneq\ldots\subsetneq V_r=\mathbb{R}^n.
\end{equation}
Moreover, we define $\mathbb P_k$ the orthogonal projection onto the vector subspace $V_k$ for all $0\le k\le r$. All over the work, the orthogonality is taken with respect to the canonical Euclidean structure. We notice from \eqref{30082018E6} that $\mathbb P_r$ is the identity matrix. The following theorem is the main result of this paper and shows that the structure \eqref{30082018E6} of the space $\mathbb R^n$ induced by the family $(V_k)_{0\le k\le r}$ allows one to sharply describe the short-time asymptotics of the regularizing effects induced by the semigroup $(e^{-t\mathcal P})_{t\geq0}$ in the Gevrey type space $G^{\frac1{2s}}(\mathbb R^n)$ : 

\begin{thm}\label{30082018T1} Let $\mathcal{P}$ be the fractional Ornstein-Uhlenbeck operator defined in \eqref{06022018E3} and equipped with the domain \eqref{19062018E1}. When the Kalman rank condition \eqref{10052018E4} holds, there exist some positive constants $C>1$ and $0<t_0<1$ such that for all $k\in\{0,\ldots,r\}$, $q>0$, $0<t<t_0$ and $u\in L^2(\mathbb{R}^n)$,
$$\Vert\langle \mathbb P_kD_x\rangle^q e^{-t\mathcal{P}}u\Vert_{L^2(\mathbb{R}^n)}
\le \frac{C^{1+q}}{t^{q(\frac{1}{2s}+k)}}\ e^{\frac{1}{2}\Tr(B)t}\ q^{\frac{q}{2s}}\ \Vert u\Vert_{L^2(\mathbb{R}^n)},$$
where $\mathbb P_k$ is the orthogonal projection onto the vector subspace $V_k$ defined in \eqref{01062018E2}, $0\le r\le n-1$ is the smallest integer satisfying \eqref{05052018E4} and $\langle \cdot \rangle = \sqrt{1+|\cdot|^2}$ denotes the usual Japanese bracket. In particular, we have that for all $q>0$, $0<t<t_0$ and $u\in L^2(\mathbb{R}^n)$,
$$\Vert\langle D_x\rangle^qe^{-t\mathcal{P}}u\Vert_{L^2(\mathbb{R}^n)}
\le \frac{C^{1+q}}{t^{q(\frac{1}{2s}+r)}}\ e^{\frac{1}{2}\Tr(B)t}\ q^{\frac{q}{2s}}\ \Vert u\Vert_{L^2(\mathbb{R}^n)},$$
since $\mathbb P_r$ is the identity matrix.
\end{thm}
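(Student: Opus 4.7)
The plan is to exploit the explicit Fourier representation of $e^{-t\mathcal{P}}$ provided by Theorem~\ref{06022018T1} and reduce the Gevrey estimate to a pointwise multiplier bound on the frequency side. By Plancherel's identity,
\begin{equation*}
\|\langle \mathbb{P}_k D_x\rangle^q e^{-t\mathcal{P}}u\|_{L^2(\mathbb{R}^n)}^2 = e^{2\Tr(B)t}\int_{\mathbb{R}^n} \langle \mathbb{P}_k\xi\rangle^{2q} e^{-\phi_t(\xi)} |\widehat{u}(e^{tB^T}\xi)|^2\, d\xi,
\end{equation*}
where $\phi_t(\xi) := \int_0^t |Q^{1/2}e^{\tau B^T}\xi|^{2s}\, d\tau$. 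After the linear change of variable $\eta = e^{tB^T}\xi$ (which generates the Jacobian $e^{-\Tr(B)t}$), the estimate reduces to proving the pointwise multiplier bound
\begin{equation*}
\sup_{\xi\in\mathbb{R}^n}\langle \mathbb{P}_k\xi\rangle^q \exp\!\left(-\tfrac{1}{2}\phi_t(\xi)\right) \leq \frac{C^{1+q}}{t^{q(1/(2s)+k)}}\, q^{q/(2s)},\qquad 0 < t < t_0.
\end{equation*}
The second statement of the theorem is then immediate by taking $k=r$, since $\mathbb{P}_r = I$.

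The heart of the argument is the anisotropic coercivity estimate
\begin{equation*}
\phi_t(\xi) \geq c\, t^{1+2sk}\,|\mathbb{P}_k\xi|^{2s},\qquad 0 < t < t_0,\ \xi \in \mathbb{R}^n,\ 0 \leq k \leq r.
\end{equation*}
Granted this, the multiplier bound follows by setting $R = |\mathbb{P}_k\xi|$, using $\langle \mathbb{P}_k\xi\rangle^q \leq 2^{q/2}(1 + R^q)$, and optimizing the elementary function $R \mapsto R^q \exp(-\tfrac{c}{2}t^{1+2sk}R^{2s})$: its critical point $R_\ast^{2s} = q/(sct^{1+2sk})$ yields a maximum of order $(q/(ct^{1+2sk}))^{q/(2s)}$, which is exactly the claimed bound since $(1+2sk)/(2s) = 1/(2s)+k$.

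To prove the coercivity estimate I would first extract from the Kalman rank condition, via the identity $V_k^\perp = \bigcap_{j=0}^k \ker\bigl(Q^{1/2}(B^T)^j\bigr)$ (\eqref{10052018E4} vs.\ \eqref{05052018E4}), the norm equivalence on $\mathbb{R}^n$
\begin{equation*}
|\mathbb{P}_k\xi|^2 \asymp \sum_{j=0}^k \bigl|Q^{1/2}(B^T)^j\xi\bigr|^2.
\end{equation*}
Then, writing $\xi = \mathbb{P}_k\xi + (I-\mathbb{P}_k)\xi$, the component $(I-\mathbb{P}_k)\xi \in V_k^\perp$ contributes to $Q^{1/2}e^{\tau B^T}\xi$ only through the Taylor terms of order $\tau^{k+1}$ and higher, so the leading behaviour as $\tau \to 0$ is driven by $Q^{1/2}e^{\tau B^T}\mathbb{P}_k\xi$, whose Taylor coefficients after rescaling $\tau = t\sigma$ are $\tfrac{t^j}{j!}Q^{1/2}(B^T)^j\mathbb{P}_k\xi$, $j \geq 0$. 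A compactness/norm-equivalence argument on the finite-dimensional sphere $\{\eta \in V_k : |\eta|=1\}$ then delivers, for $t$ small enough, the lower bound $\int_0^t |Q^{1/2}e^{\tau B^T}\mathbb{P}_k\xi|^{2s}\, d\tau \geq c\, t^{1+2sk}|\mathbb{P}_k\xi|^{2s}$, and the orthogonal contribution of $(I - \mathbb{P}_k)\xi$ is absorbed by choosing $t_0$ small.

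The main obstacle is this last anisotropic integral lower bound. For $s \geq 1/2$, Jensen's inequality and the classical $s=1$ computation (where $\phi_t(\xi)$ reduces up to constants to the quadratic form $\langle Q_t\xi,\xi\rangle$ and an anisotropic lower bound on the controllability Gramian $Q_t$ is well-known from Kalman-rank theory) provide a short route. For $0 < s < 1/2$, however, $r \mapsto r^{2s}$ is concave and Jensen goes the wrong way, so one has to rely on the compactness argument above, carefully tracking uniformity in $\xi$ and handling the fact that the set $\{\xi : |\mathbb{P}_k\xi| = 1\}$ is unbounded in $\mathbb{R}^n$ by splitting $\xi$ along $V_k \oplus V_k^\perp$ and using the $O(\tau^{k+1})$ Taylor control of the orthogonal part. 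Once the coercivity estimate is secured, everything else is one-variable calculus.
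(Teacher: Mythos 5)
Your overall architecture (Plancherel plus the explicit Fourier formula of Theorem \ref{06022018T1}, reduction to a pointwise multiplier bound, then one-variable optimization of $R\mapsto R^{q}e^{-cTR^{2s}}$) is sound and parallels the paper's proof, and the anisotropic coercivity estimate $\int_0^t\vert Q^{\frac12}e^{\tau B^T}\xi\vert^{2s}d\tau\geq c\,t^{1+2sk}\vert\mathbb P_k\xi\vert^{2s}$ that you make the heart of the argument is indeed true and would give the theorem. The genuine gap is in your proof of that estimate, at the sentence ``the orthogonal contribution of $(I-\mathbb P_k)\xi$ is absorbed by choosing $t_0$ small''. After normalizing $\vert\mathbb P_k\xi\vert=1$, the orthogonal component $\lambda=\vert(I-\mathbb P_k)\xi\vert$ ranges over all of $[0,+\infty)$, and although it enters $Q^{\frac12}e^{\tau B^T}\xi$ only at order $\tau^{k+1}$, it does so with the unbounded factor $\lambda$: on $[0,t]$ its contribution is of size $\lambda t^{k+1}$, which is not small compared with the main term (of size about $t^{k}$) as soon as $\lambda\gtrsim t^{-1}$, no matter how small $t_0$ is; and for $s<1$ you cannot discard the cross terms by convexity. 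A correct implementation needs an extra dichotomy in $\lambda$: when $\lambda$ is below a suitable negative power of $t$, compare with the degree-$r$ Taylor polynomial, whose coefficients of degree $\leq k$ do not involve $\lambda$ at all and whose Hardy-type norm is $\gtrsim t^{k}$ by the Kalman condition restricted to $V_k$, the remainder $O(t^{r+1}(1+\lambda))$ then being absorbable; when $\lambda$ is above that threshold, the crude order-$r$ bound $\int_0^1\vert Q^{\frac12}e^{t\alpha B^T}\xi\vert^{2s}d\alpha\gtrsim(t^{r}\vert\xi\vert)^{2s}\gtrsim t^{2s(k-1)}\geq t^{2sk}$ already suffices. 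Without such a case analysis the absorption step fails. A secondary slip: Jensen does not give you the case $\tfrac12\leq s<1$ as claimed, since comparing the $L^{2s}$ average with the $L^{2}$ average (the quantity controlled by the Gramian $Q_t$ of \eqref{27082018E2}) requires $s\geq1$, as in Lemma \ref{21052018L1}; with exponent $2s\geq1$ Jensen only reaches the $L^{1}$ average, for which no classical Gramian bound is available, so the compactness argument must carry all of $0<s<1$.

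It is worth noting how the paper sidesteps the very difficulty on which your sketch founders: it never proves the anisotropic lower bound for the $2s$-integral directly, but decouples the problem into (i) the purely quadratic, $s$-independent anisotropic bound $\int_0^t\vert Q^{\frac12}e^{\tau B^T}\xi\vert^{2}d\tau=\vert Q_t^{\frac12}e^{tB^T}\xi\vert^{2}\gtrsim t^{1+2k}\vert\Pi_k\xi\vert^{2}$ furnished by Lunardi's Lemma 3.1, and (ii) the scalar comparison $M^s_t\leq ct^{\frac12-\frac1{2s}}$ of Proposition \ref{jenvoiedupateenzero}, proved by polynomial approximation and Hardy norms on the unit sphere $\vert\xi\vert=1$, where the Taylor remainder is uniformly $O(t^{r+1})$ and no unbounded component ever appears. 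Your direct route can be repaired along the lines indicated above, but as written the key absorption step is unjustified.
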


In this statement and all over this work, we denote $D_x = -i\partial_x$. By using the factorial estimate $N^N\le e^NN!$, which holds for any positive integer $N\geq1$, see (0.3.12) in \cite{MR2668420}, we notice that the result of Theorem \ref{30082018T1} implies in particular that there exist some positive constants $C>1$ and $0<t_0<1$ such that for all $k\in\{0,\ldots,r\}$, $N\in\mathbb N$ (the set of all non-negative integers), $0<t<t_0$ and $u\in L^2(\mathbb{R}^n)$,
\begin{equation}\label{06082018E1}
	\Vert\langle\mathbb P_kD_x\rangle^Ne^{-t\mathcal{P}}u\Vert_{L^2(\mathbb{R}^n)}
	\le \frac{C^{1+N}}{t^{N(\frac{1}{2s}+k)}}\ e^{\frac{1}{2}\Tr(B)t}\ (N!)^{\frac1{2s}}\ \Vert u\Vert_{L^2(\mathbb{R}^n)}.
\end{equation}
Therefore, the semigroup $(e^{-t\mathcal P})_{t\geq0}$ is smoothing in the Gevrey space $G^{\frac1{2s}}(\mathbb{R}^n)$, since $\mathbb P_r$ is the identity matrix, with a global control of the seminorms in $\mathcal O(t^{-N(\frac{1}{2s}+r)})$ for small times $t\rightarrow0^+$. This control is sharpened in $\mathcal O(t^{-N(\frac{1}{2s}+k)})$ in the degenerate directions given by the ranges of the matrices $\mathbb P_k$, with $0\le k\le r-1$.

Theorem \ref{30082018T1} can be stated with the matrices $Q^{\frac12}(B^T)^k$ instead of the projections $\mathbb P_k$ :

\begin{cor}\label{28092018C1} Let $\mathcal{P}$ be the fractional Ornstein-Uhlenbeck operator defined in \eqref{06022018E3} and equipped with the domain \eqref{19062018E1}. When the Kalman rank condition \eqref{10052018E4} holds, there exist some positive constants $C>1$ and $0<t_0<1$ such that for all $k\in\{0,\ldots,r-1\}$, $q>0$, $0<t<t_0$ and $u\in L^2(\mathbb{R}^n)$,
$$\Vert\langle Q^{\frac{1}{2}}(B^T)^kD_x\rangle^q e^{-t\mathcal{P}}u\Vert_{L^2(\mathbb{R}^n)}
\le \frac{C^{1+q}}{t^{q(\frac{1}{2s}+k)}}\ e^{\frac{1}{2}\Tr(B)t}\ q^{\frac{q}{2s}}\ \Vert u\Vert_{L^2(\mathbb{R}^n)},$$
and 
$$\Vert\langle D_x\rangle^qe^{-t\mathcal{P}}u\Vert_{L^2(\mathbb{R}^n)}
\le \frac{C^{1+q}}{t^{q(\frac{1}{2s}+r)}}\ e^{\frac{1}{2}\Tr(B)t}\ q^{\frac{q}{2s}}\ \Vert u\Vert_{L^2(\mathbb{R}^n)},$$
where $0\le r\le n-1$ is the smallest integer satisfying \eqref{05052018E4}.
\end{cor}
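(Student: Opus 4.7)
The plan is to deduce Corollary \ref{28092018C1} directly from Theorem \ref{30082018T1} by establishing a pointwise comparison, on the Fourier side, between the symbols $\langle Q^{\frac12}(B^T)^k\xi\rangle$ and $\langle\mathbb{P}_k\xi\rangle$. The second estimate in the corollary is exactly the second estimate in Theorem \ref{30082018T1} (since $\mathbb{P}_r = I$), so only the first estimate requires work.

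The key algebraic observation is the identification of the orthogonal complement of the vector space $V_k$ introduced in \eqref{01062018E2}. Since $V_k = \sum_{j=0}^{k}\Ran(B^jQ^{\frac12})$, a vector $\eta\in\mathbb{R}^n$ lies in $V_k^\perp$ if and only if $\langle \eta, B^jQ^{\frac12}y\rangle = 0$ for every $y\in\mathbb R^n$ and every $0\le j\le k$, i.e. if and only if $Q^{\frac12}(B^T)^j\eta = 0$ for all $0\le j\le k$. Applying this to $\eta = (I-\mathbb{P}_k)\xi$ yields the identity
$$Q^{\frac12}(B^T)^j\xi = Q^{\frac12}(B^T)^j\mathbb{P}_k\xi, \qquad \xi\in\mathbb{R}^n,\ 0\le j\le k.$$

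From this identity, the operator norm estimate $|Q^{\frac12}(B^T)^k\xi| \le \|Q^{\frac12}(B^T)^k\| \, |\mathbb{P}_k\xi|$ holds for every $\xi\in\mathbb{R}^n$. Setting $M_k = \max(1,\|Q^{\frac12}(B^T)^k\|)$, this upgrades to the pointwise bound
$$\langle Q^{\frac12}(B^T)^k\xi\rangle \le M_k\, \langle\mathbb{P}_k\xi\rangle,$$
and therefore, raising to the power $q>0$, $\langle Q^{\frac12}(B^T)^k\xi\rangle^q \le M_k^q\, \langle\mathbb{P}_k\xi\rangle^q$. Since both sides define non-negative Fourier multipliers, Plancherel's identity yields
$$\bigl\|\langle Q^{\frac12}(B^T)^kD_x\rangle^q e^{-t\mathcal{P}}u\bigr\|_{L^2(\mathbb{R}^n)} \le M_k^q \bigl\|\langle\mathbb{P}_kD_x\rangle^q e^{-t\mathcal{P}}u\bigr\|_{L^2(\mathbb{R}^n)}.$$
The announced inequality then follows from Theorem \ref{30082018T1}, after replacing the constant $C$ by $C\cdot\max_{0\le k\le r-1} M_k$.

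There is no genuine obstacle here: the whole argument is the algebraic lemma identifying $V_k^\perp$ with $\bigcap_{j=0}^k\Ker(Q^{\frac12}(B^T)^j)$, followed by a trivial comparison of Fourier multipliers. The only point requiring a small amount of care is to absorb the matrix-dependent constants $M_k$ (uniformly in $k\in\{0,\ldots,r-1\}$) into the universal constant $C$ inherited from Theorem \ref{30082018T1}, which is possible since $r\le n-1$ is fixed by the operator $\mathcal{P}$.
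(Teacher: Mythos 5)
Your proof is correct and follows essentially the same route as the paper: identifying $V_k^{\perp}=\Ker(Q^{\frac12})\cap\ldots\cap\Ker(Q^{\frac12}(B^T)^k)$, deducing the pointwise symbol bound $\langle Q^{\frac12}(B^T)^k\xi\rangle^q\lesssim^{q}\langle\mathbb P_k\xi\rangle^q$, and concluding via Plancherel and Theorem \ref{30082018T1}. The only difference is cosmetic — the paper passes through a sum over $j\in\{0,\ldots,k\}$ before absorbing constants, whereas you compare directly with $\langle\mathbb P_k\xi\rangle^q$, which is a slight streamlining of the same argument.
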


\begin{rk}\label{03092018R1} Let $\mathcal L$ be the hypoelliptic Ornstein-Uhlenbeck operator defined in \eqref{05052018E1}. Similar properties of regularizing effects for the semigroup $(e^{-t\mathcal L})_{t\geq0}$ on a weighted Lebesgue space $L^2_{\mu}(\mathbb R^n)$, were obtained on the one hand by A. Lunardi in \cite{MR1389786} when the matrix $Q$ is assumed to be positive definite and on the other hand by B. Farkas and A. Lunardi in \cite{MR2257846} and by M. Hitrik, K. Pravda-Starov and J. Viola in \cite{MR3710672} in the degenerate case when the matrix $Q$ is only symmetric positive semidefinite. More precisely, when the semigroup $(e^{-t\mathcal L})_{t\geq0}$ admits an invariant measure $\mu$, which is known to be equivalent \cite{MR1207136} (Section 11.2.3) to the fact all the eigenvalues of the matrix $B$ have a negative real part, the result of \cite{MR3710672} (Corollary 3.3) and a straightforward induction state that there exists a positive constant $C>0$ such that for all $(\alpha,\beta)\in\mathbb N^n$, $0<t<1$ and $u\in L^2_{\mu}(\mathbb R^n)$,
\begin{equation}\label{05092018E2}
	\Vert x^{\alpha}\partial^{\beta}_x(e^{-t\mathcal L}u)\Vert_{L^2_{\mu}(\mathbb R^n)}
	\le\frac{C^{1+\vert\alpha\vert+\vert\beta\vert}}{t^{\vert\alpha+\beta\vert(\frac12+r)}}\ (\alpha!)^{\frac12+r}\ (\beta!)^{\frac12+r}\ \Vert u\Vert_{L^2_{\mu}(\mathbb R^n)},
\end{equation}
where we denote by $L^2_{\mu}(\mathbb R^n)$ the Lebesgue space with weight $\mu$ and $0\le r\le n-1$ the smallest integer satisfying \eqref{05052018E4}. Notice that the index $r$ has the same role in the control of the regularizing effects of the semigroup $(e^{-t\mathcal L})_{t\geq0}$ acting on the weighted space $L^2_{\mu}(\mathbb R^n)$ in \eqref{05092018E2} as in the control of the Gevrey regularizing effects of the semigroup $(e^{-t\mathcal L})_{t\geq0}$ acting on $L^2(\mathbb R^n)$ in \eqref{06082018E1} when $s=1$ and $k=r$.
\end{rk}

\subsection{Null-controllability} In a second step, we study the null-controllability of fractional Ornstein-Uhlenbeck equations posed on the whole space :
\begin{equation}\label{24052018E1}
\left\{\begin{array}{l}
	\partial_tf(t,x) + \mathcal{P} f(t,x) = u(t,x)\mathbbm{1}_{\omega}(x),\quad t>0,\ x\in\mathbb{R}^n, \\[5pt]
	f(0) = f_0\in L^2(\mathbb{R}^n),
\end{array}\right.
\end{equation}
where $\omega\subset\mathbb{R}^n$ is a Borel set with positive Lebesgue measure and $\mathbbm{1}_{\omega}$ is its characteristic function :

\begin{dfn}[Null-controllability] Let $T>0$ and $\omega$ be a Borel subset of $\mathbb{R}^n$ with positive Lebesgue measure. Equation \eqref{24052018E1} is said to be null-controllable from the set $\omega$ in time $T$ if, for any initial datum $f_0\in L^2(\mathbb{R}^n)$, there exists $u\in L^2((0,T)\times\mathbb{R}^n)$, supported in $(0,T)\times\omega$, such that the mild (semigroup) solution of \eqref{24052018E1} satisfies $f(T,\cdot) = 0$.
\end{dfn}

By the Hilbert Uniqueness Method, see \cite{MR2302744} (Theorem 2.44), the null-controllability of the equation \eqref{24052018E1} is equivalent to the observability of the adjoint system
\begin{equation}\label{24052018E2}
\left\{\begin{array}{l}
	\partial_tg(t,x) + \mathcal{P}^*g(t,x) = 0,\quad t>0,\ x\in\mathbb{R}^n, \\[5pt]
	g(0) = g_0\in L^2(\mathbb{R}^n),
\end{array}\right.
\end{equation}
with $\mathcal P^*$ the formal adjoint of the operator $\mathcal P$ in $L^2(\mathbb R^n)$. We recall the definition of observability :

\begin{dfn}[Observability]\label{3} Let $T>0$ and $\omega$ be a Borel subset of $\mathbb{R}^n$ with positive Lebesgue measure. Equation \eqref{24052018E2} is said to be observable from the set $\omega$ in time $T$ if there exists a constant $C(T,\omega)>0$ such that, for any initial datum $g_0\in L^2(\mathbb{R}^n)$, the mild (semigroup) solution of \eqref{24052018E2} satisfies
\begin{gather}\label{24052018E3}
	\Vert g(T,\cdot)\Vert^2_{L^2(\mathbb{R}^n)}\le C(T,\omega)\int_0^T\Vert g(t,\cdot)\Vert^2_{L^2(\omega)}\ dt.
\end{gather}
\end{dfn}

The null-controllability of hypoelliptic Ornstein-Uhlenbeck equations
\begin{equation}\label{25052018E3}
\left\{\begin{array}{l}
	\partial_tf(t,x) - \frac{1}{2}\Tr(Q\nabla^2_x)f(t,x) + \langle Bx,\nabla_x\rangle f(t,x) = u(t,x)\mathbbm{1}_{\omega}(x),\quad t>0,\ x\in\mathbb{R}^n, \\[5pt]
	f(0) = f_0\in L^2(\mathbb{R}^n),
\end{array}\right.
\end{equation}
corresponding to the case when $s=1$, is studied by K. Beauchard and K. Pravda-Starov in \cite{MR3732691} (Theorem 1.3). More precisely, the two authors prove that the equation \eqref{25052018E3} is null-controllable in any positive time, once the control set $\omega\subset\mathbb{R}^n$ satisfies the following geometric condition
\begin{equation}\label{08062018}
	\exists \delta,r>0,\forall y\in\mathbb{R}^n,\exists y'\in\omega,\quad B(y',r)\subset\omega\ \text{and}\ \vert y-y'\vert<\delta.
\end{equation}
Their proof is based on a Lebeau-Robbiano strategy, that is, on the combinaison of a so-called spectral inequality and a dissipation estimate for the high-frequencies of the evolution operator, that we shall also be using in this work.

The case where $B = 0_n$ and $Q = 2^{\frac{1}{s}}I_n$, corresponding to the fractional heat equation
\begin{equation}\label{24052018E4}
\left\{\begin{aligned}
	& \partial_tf(t,x) +  (- \Delta_x)^s f(t,x) = u(t,x)\mathbbm{1}_{\omega}(x),\quad t>0,\ x\in\mathbb{R}^n, \\
	& f(0) = f_0\in L^2(\mathbb{R}^n),
\end{aligned}\right.
\end{equation}
is widely studied. When $0<s<1/2$, A. Koenig proved in \cite{K} (Theorem 3) that this equation is not null-controllable in any positive time, once $\omega\subset\mathbb R^n$ is open with $\omega\ne\mathbb R^n$. Furthermore, no positive null-controllability result is known with non trivial measurable control supports for such $s$. This is not the case when $s>1/2$, since then, L. Miller derived in \cite{MR2272076} the null-controllability in any positive time of  \eqref{24052018E4} for control subsets $\omega\subset\mathbb{R}^n$ which are exteriors of compacts sets, see Subsection 3.2, and more specifically Theorem 3.1, in \cite{MR2272076}. A. Koenig also studied this equation for $s=1/2$ in \cite{MR3730500}, but only on the one-dimensional torus $\mathbb T$ and proves that when $\omega = \mathbb T\setminus[a,b]$, with $[a,b]$ a non-trivial segment of $\mathbb T$, the equation \eqref{24052018E4} is not null-controllable in time $T$ for all $T>0$. The case when $s=1$ corresponding to the heat equation is now fully understood with the recent results by M. Egidi and I. Veselic in \cite{MR3816981} and G. Wang, M. Wang, C. Zhang and Y. Zhang in \cite{WZ} establishing that the heat equation posed on the whole Euclidean space is null-controllable in any positive time if and only if the control subset $\omega\subset\mathbb{R}^n$ is thick. The thickness of a subset of $\mathbb{R}^n$ is defined as follows :

\begin{dfn} Let $\gamma\in(0,1]$ and $a = (a_1,\ldots,a_n)\in(\mathbb{R}^*_+)^n$. Let $C = [0,a_1]\times\ldots\times[0,a_n]\subset\mathbb{R}^n$. A set $\omega\subset\mathbb{R}^n$ is called $(\gamma,a)$-thick if it is measurable and 
$$\forall x\in\mathbb{R}^n,\quad \vert\omega\cap(x+C)\vert\geq\gamma\prod_{j=1}^na_j,$$
where $\vert\omega\cap(x+C)\vert$ stands for the Lebesgue measure of $\omega\cap(x+C)$. A set $\omega\subset\mathbb{R}^n$ is called thick if there exist $\gamma\in(0,1]$ and $a\in(\mathbb{R}^*_+)^n$ such that $\omega$ is $(\gamma,a)$-thick.
\end{dfn}

\noindent Note that the thickness is weaker than the condition \eqref{08062018} considered in \cite{MR3732691}.

By taking advantage of the smoothing effect of the semigroup $(e^{-t\mathcal{P}})_{t\geq0}$, we aim in this work at proving that the fractional Ornstein-Uhlenbeck equation \eqref{24052018E1} is null-controllable in any positive time from thick control subsets of $\mathbb{R}^n$, once $s>1/2$ : 

\begin{thm}\label{05022018T3} Let $\mathcal{P}$ be the fractional Ornstein-Uhlenbeck operator defined in \eqref{06022018E3} and equipped with the domain \eqref{19062018E1}. We assume that $s>1/2$, and that the Kalman rank condition \eqref{10052018E4} holds. If $\omega\subset\mathbb{R}^n$ is a thick set, then the parabolic equation
\begin{equation}\label{25052018E1}
\left\{\begin{array}{l}
	\partial_tf(t,x) + \mathcal{P} f(t,x) = u(t,x)\mathbbm{1}_{\omega}(x),\quad t>0,\ x\in\mathbb{R}^n, \\[5pt]
	f(0) = f_0\in L^2(\mathbb{R}^n),
\end{array}\right.
\end{equation}
is null-controllable from the set $\omega$ in any positive time $T>0$.
\end{thm}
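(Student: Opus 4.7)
The approach is the Lebeau--Robbiano method, whose two ingredients will be a uniform spectral inequality on thick sets and a high-frequency dissipation estimate derived from the Gevrey smoothing of Theorem~\ref{30082018T1}. By the Hilbert Uniqueness Method, null-controllability of \eqref{25052018E1} from $\omega$ in time $T$ is equivalent to the observability inequality \eqref{24052018E3} for the adjoint semigroup $(e^{-t\mathcal{P}^*})_{t\geq 0}$, so it is enough to establish the latter. The adjoint $\mathcal{P}^*$ is itself a fractional Ornstein--Uhlenbeck operator (with $B$ replaced by $-B$, up to a zeroth order term), satisfies the same Kalman rank condition, and therefore inherits the explicit Fourier formula of Theorem~\ref{06022018T1} and the smoothing of Theorem~\ref{30082018T1}.

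For the first ingredient, I would invoke the Logvinenko--Sereda theorem for thick sets, in the Kovrijkine form used by \cite{MR3816981} and \cite{WZ}: if $\omega$ is $(\gamma,a)$-thick, then for every $N>0$ and every $f\in L^2(\mathbb{R}^n)$ whose Fourier transform is supported in the ball $\{|\xi|\le N\}$,
$$\Vert f\Vert_{L^2(\mathbb{R}^n)}\le K_1\Big(\frac{K_2}{\gamma}\Big)^{K_3(1+\vert a\vert N)}\Vert f\Vert_{L^2(\omega)}.$$
For the second ingredient, the explicit multiplier representation gives, after a linear change of variable,
$$\Vert \mathbbm{1}_{|D_x|\ge N}\,e^{-t\mathcal{P}^*}u\Vert_{L^2(\mathbb{R}^n)}^2\le e^{\Tr(B)t}\sup_{|\eta|\ge N}\exp\Big[-\int_0^t|Q^{\frac{1}{2}}e^{\tau B^T}\eta|^{2s}\,d\tau\Big]\,\Vert u\Vert_{L^2(\mathbb{R}^n)}^2.$$
Using the Kalman rank condition and the Taylor expansion of $e^{\tau B^T}$, which are the same ingredients driving Theorem~\ref{30082018T1}, I would prove a lower bound of the form
$$\int_0^t|Q^{\frac{1}{2}}e^{\tau B^T}\eta|^{2s}\,d\tau\ge c_0\,t^{1+2sr}|\eta|^{2s},\qquad 0<t\le t_0,\ \eta\in\mathbb{R}^n,$$
translating into the dissipation estimate
$$\Vert \mathbbm{1}_{|D_x|\ge N}\,e^{-t\mathcal{P}^*}u\Vert_{L^2(\mathbb{R}^n)}\le e^{\frac{1}{2}\Tr(B)t}\exp\big[-\tfrac{c_0}{2}t^{1+2sr}N^{2s}\big]\Vert u\Vert_{L^2(\mathbb{R}^n)}.$$

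With both inequalities at hand, I would conclude via the abstract Lebeau--Robbiano telescoping construction as in \cite{MR3732691} and \cite{MR2272076}: on a geometric sequence of time intervals of lengths $\sim 2^{-j}T$, one cuts off at frequencies $N_j=2^j$, uses the spectral inequality to control the low-frequency part, and invokes the dissipation estimate to absorb the high-frequency remainder during the next interval. The resulting series of control costs converges precisely when the decay in $N^{2s}$ beats the exponential growth in $N$ produced by the spectral inequality, which forces $2s>1$; this is exactly the hypothesis $s>1/2$ and matches the threshold found by Miller \cite{MR2272076} for the fractional heat equation. The main technical obstacle is the sharp lower bound on $\int_0^t|Q^{1/2}e^{\tau B^T}\eta|^{2s}\,d\tau$ with the correct exponent $t^{1+2sr}$, uniformly in $\eta$: one must decompose $\eta$ along the filtration $V_0\subsetneq V_1\subsetneq\cdots\subsetneq V_r=\mathbb{R}^n$ and combine the successive Taylor coefficients via the Kalman non-degeneracy, in the same spirit as in the proof of Theorem~\ref{30082018T1}.
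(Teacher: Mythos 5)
Your proposal is correct and follows the same architecture as the paper: reduction via the Hilbert Uniqueness Method to an observability estimate for the adjoint (which, after the conjugation by $e^{\frac12\Tr(B)t}$, is again a fractional Ornstein--Uhlenbeck operator with $-B$ satisfying the Kalman condition), then the Lebeau--Robbiano strategy combining Kovrijkine's spectral inequality on thick sets (Theorem \ref{11052018T1}) with a high-frequency dissipation estimate of the form $e^{-c\,t^{1+2rs}N^{2s}}$, the condition $2s>1$ entering exactly where you say it does; this is Theorem \ref{23052018T1} proved through the abstract result of Theorem \ref{08122017T3} from \cite{MR3732691}, \cite{MR2679651}. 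The one point where you diverge is the derivation of the dissipation estimate: the paper obtains \eqref{08122017E20} by feeding the Gevrey seminorm bounds of Theorem \ref{30082018T1} into the exponential series $e^{Ct^{2s\Gamma}\vert D_x\vert^{2s}}$, whereas you work directly on the Fourier representation of Theorem \ref{06022018T1} and reduce everything to the uniform lower bound $\int_0^t\vert Q^{\frac12}e^{\tau B^T}\eta\vert^{2s}d\tau\geq c_0\,t^{1+2rs}\vert\eta\vert^{2s}$ for $0<t\le t_0$. That lower bound is true and is in substance already contained in the paper: combining \eqref{ineq_Kalman} with \eqref{23092018E1} in the proof of Proposition \ref{jenvoiedupateenzero} gives $\int_0^1\vert Q^{\frac12}e^{t\alpha B^T}\xi\vert^{2s}d\alpha\gtrsim t^{2rs}$ uniformly on $\mathbb{S}^{n-1}$, hence your estimate after rescaling, so your route is a slightly more direct (multiplier-level) version of the same Taylor-plus-Kalman compactness argument rather than a new idea; its small advantage is that it bypasses Lunardi's projection bounds and Theorem \ref{30082018T1} altogether for the controllability part, while the paper's route reuses the smoothing theorem it needs anyway for the subelliptic estimates. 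One bookkeeping caution: after the change of variables $\eta=e^{tB^T}\xi$ the supremum in your intermediate bound runs over $\{\vert e^{-tB^T}\eta\vert\geq N\}$ and the exponent involves $\int_0^t\vert Q^{\frac12}e^{-\sigma B^T}\eta\vert^{2s}d\sigma$; for $0<t\le t_0$ this only modifies $N$ and the constants by factors bounded in terms of $e^{t_0\Vert B\Vert}$, so the argument goes through unchanged.
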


\begin{rk} Theorem \ref{05022018T3} extends Theorem 1.3 in \cite{MR3732691} (concerning the null-controllability of hypoelliptic Ornstein-Uhlenbeck equations) to thick control subsets, and the result by L. Miller mentioned above since it implies that the fractional heat equation \eqref{24052018E4} is null-controllable in any positive time, once $s>1/2$ and the control subset $\omega$ is thick.
\end{rk}

\begin{rk} It is an open and interesting issue to know if the evolution equations \eqref{25052018E1} associated with fractional Ornstein-Uhlenbeck operators are null-controllable or not in the case when $0<s\le1/2$. As mentioned above, the only existing result in this case is A. Koenig's one \cite{K} (Theorem 3) concerning the non null-controllability of the fractional heat equation \eqref{24052018E4} from strict open subsets $\omega$ of $\mathbb R^n$ when $0<s<1/2$.
\end{rk}

\begin{ex} If $\omega\subset\mathbb{R}^n$ is thick, the fractional Kolmogorov equation posed on the whole space
$$\left\{\begin{array}{l}
	\partial_tf(t,x,v) + v\cdot(\nabla_xf)(t,x,v) + (-\Delta_v)^sf(t,x,v) = u(t,x,v)\mathbbm{1}_{\omega}(x,v),\quad t>0,\ (x,v)\in\mathbb{R}^{2n}, \\[5pt]
	f(0) = f_0\in L^2(\mathbb{R}^{2n}),
\end{array}\right.$$
where $s>1/2$, is null-controllable in any positive time $T>0$, since the matrices defined in \eqref{08062018E3} satisfy the Kalman rank condition \eqref{10052018E4}.
\end{ex}

By using the change of unknows $g = e^{-\frac{1}{2}\Tr(B)t}f$ and $v = e^{-\frac{1}{2}\Tr(B)t}u$, where $f$ is a solution of \eqref{25052018E1} with control $u$, we notice that the result of Theorem \ref{05022018T3} is equivalent to the null-controllability of the equation
\begin{equation}\label{12022018E5}
\left\{\begin{array}{l}
	\partial_tg(t,x) + \mathcal{P}_{co}g(t,x) = v(t,x)\mathbbm{1}_{\omega}(x), \\[5pt]
	g(0) = f_0\in L^2(\mathbb{R}^n),
\end{array}\right.
\end{equation}
where 
\begin{equation}\label{04052018E1}
	\mathcal{P}_{co} = \frac{1}{2}\Tr^s(-Q\nabla^2_x) + \langle Bx,\nabla_x\rangle + \frac{1}{2}\Tr(B).
\end{equation}
We prove in Corollary \ref{16032018C2} that the adjoint of the operator $\mathcal{P}_{co}$ equipped with the domain 
\begin{equation}\label{04052018E2}
	D(\mathcal{P}_{co}) = \{u\in L^2(\mathbb{R}^n),\quad \mathcal{P}_{co}u\in L^2(\mathbb{R}^n)\} = D(\mathcal{P})
\end{equation}
is given by
$$(\mathcal{P}_{co})^* = \frac{1}{2}\Tr^s(-Q\nabla^2_x) + \langle -Bx,\nabla_x\rangle + \frac{1}{2}\Tr(-B),$$
with domain $D(\mathcal P)$. Moreover, $-B$ and $Q^{\frac{1}{2}}$ also satisfy the Kalman rank condition \eqref{10052018E4}. Therefore, by the Hilbert Uniqueness Method, the result of null-controllability given by Theorem \ref{05022018T3} is equivalent to the following observability estimate :

\begin{thm}\label{23052018T1} Let $\mathcal{P}_{co}$ be the operator defined in \eqref{04052018E1} and equipped with the domain \eqref{04052018E2}. We assume that $s>1/2$, and that the Kalman rank condition \eqref{10052018E4} holds. If $\omega\subset\mathbb{R}^n$ is a thick set, there exists a positive constant $C>0$ such that for all $T>0$ and $u\in L^2(\mathbb{R}^n)$,
\begin{equation}\label{06022018E2}
	\Vert e^{-T\mathcal{P}_{co}}u\Vert^2_{L^2(\mathbb{R}^n)}\le C\exp\left(\frac{C}{T^{\frac{1+2rs}{2s-1}}}\right)\int_0^T\Vert e^{-t\mathcal{P}_{co}}u\Vert^2_{L^2(\omega)}\ dt,
\end{equation}
where $0\le r\le n-1$ is the smallest integer satisfying \eqref{05052018E4}.
\end{thm}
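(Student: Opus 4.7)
The plan is to apply the Lebeau-Robbiano strategy, which combines a quantitative spectral (Logvinenko-Sereda) inequality for thick sets with a high-frequency dissipation estimate for the semigroup $(e^{-t\mathcal P_{co}})_{t\geq0}$. Since $e^{-t\mathcal P_{co}} = e^{-\frac12\Tr(B)t}e^{-t\mathcal P}$, the Gevrey smoothing estimate of Theorem \ref{30082018T1} transfers without loss and yields, for all $0<t<t_0$, $q>0$ and $u\in L^2(\mathbb R^n)$,
$$\|\langle D_x\rangle^q e^{-t\mathcal P_{co}}u\|_{L^2(\mathbb R^n)}\le \frac{C^{1+q}}{t^{q(\frac{1}{2s}+r)}}\,q^{\frac{q}{2s}}\,\|u\|_{L^2(\mathbb R^n)},$$
the trace factor cancelling exactly.

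The first step is to convert this smoothing estimate into a dissipation bound. Let $\pi_\Lambda$ denote the Fourier truncation onto $\{|\xi|\le \Lambda\}$. Since $|\xi|^q\ge \Lambda^q$ on the support of $1-\pi_\Lambda$, the Gevrey inequality gives
$$\|(\I-\pi_\Lambda)e^{-t\mathcal P_{co}}u\|_{L^2}\le \Lambda^{-q}\frac{C^{1+q}}{t^{q(\frac{1}{2s}+r)}}\,q^{\frac{q}{2s}}\,\|u\|_{L^2} = C\bigl(C\Lambda^{-1}t^{-(\frac{1}{2s}+r)}\bigr)^q q^{\frac{q}{2s}}\|u\|_{L^2}.$$
Optimizing over $q>0$ (choosing $q$ of the order $(\Lambda\,t^{\frac{1}{2s}+r})^{2s}$) produces a dissipation estimate of the form
$$\|(\I-\pi_\Lambda)e^{-t\mathcal P_{co}}u\|_{L^2(\mathbb R^n)}\le C_1\exp\bigl(-c_1\Lambda^{2s}t^{1+2rs}\bigr)\,\|u\|_{L^2(\mathbb R^n)},$$
valid whenever $0<t<t_0$ and $\Lambda$ is larger than some $t$-independent threshold.

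The second ingredient is the spectral inequality of Kovrijkine (the sharp $L^2$ Logvinenko-Sereda theorem on thick sets): there exists $C_2>0$ depending only on the thickness parameters $(\gamma,a)$ of $\omega$ such that, for every $\Lambda>0$ and every $f\in L^2(\mathbb R^n)$ with $\Supp\widehat f\subset\{|\xi|\le\Lambda\}$,
$$\|f\|_{L^2(\mathbb R^n)}\le C_2 e^{C_2\Lambda}\|f\|_{L^2(\omega)}.$$
Applied to $\pi_\Lambda e^{-t\mathcal P_{co}}u$, this controls the low-frequency part on $\omega$ with a cost exponential in $\Lambda$, while the previous step controls the high-frequency remainder.

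The last step is to combine these two ingredients by an adapted Lebeau-Robbiano telescoping argument, in the spirit of Beauchard and Pravda-Starov \cite{MR3732691}. Fixing $T>0$, I would introduce a geometric sequence of frequencies $\Lambda_k=K\cdot 2^k$ and dyadic time slabs $I_k=[T(1-2^{-k}),T(1-2^{-k-1})]$, split $e^{-t\mathcal P_{co}}u$ into its $\pi_{\Lambda_k}$ and $\I-\pi_{\Lambda_k}$ parts on each $I_k$, and iterate. On each slab, the spectral inequality contributes a factor $\exp(C_2\Lambda_k)$ while the semigroup dissipation contributes $\exp(-c_1\Lambda_k^{2s}|I_k|^{1+2rs})$. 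The main obstacle, and the heart of the argument, is to carry out this telescoping so as to extract the correct $T$-dependence: the decisive balance is $\Lambda^{2s}T^{1+2rs}\sim\Lambda$, which picks the critical frequency $\Lambda^*\sim T^{-(1+2rs)/(2s-1)}$ and produces the announced observability constant $C\exp(CT^{-(1+2rs)/(2s-1)})$. This balance closes exactly because $2s>1$, i.e. $s>1/2$: the exponent $2s$ of $\Lambda$ in the dissipation strictly exceeds the exponent $1$ of $\Lambda$ in the spectral cost, so the gain beats the cost and the telescoping sum converges. This is precisely why the statement requires $s>1/2$ and why the exponent in the observability constant takes the explicit form $(1+2rs)/(2s-1)$.
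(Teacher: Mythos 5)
Your strategy is essentially the one the paper follows: the observability estimate is obtained by combining Kovrijkine's Logvinenko--Sereda inequality on thick sets (Theorem \ref{11052018T1}), which gives a spectral inequality with cost $e^{c_1k}$ for frequencies in $[-k,k]^n$, with a high-frequency dissipation estimate deduced from the Gevrey smoothing of Theorem \ref{30082018T1}, of the form $\Vert(1-\pi_k)e^{-t\mathcal{P}_{co}}u\Vert_{L^2(\mathbb{R}^n)}\le c_2^{-1}e^{-c_2t^{1+2rs}k^{2s}}\Vert u\Vert_{L^2(\mathbb{R}^n)}$; your derivation of the dissipation by optimizing in $q$ is a harmless variant of the paper's summation of the series defining $e^{Ct^{2s\Gamma}\vert D_x\vert^{2s}}$, and the restriction to $\Lambda$ above a threshold disappears since $(e^{-t\mathcal{P}_{co}})_{t\geq0}$ is a contraction semigroup. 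The only real difference lies in the last step: the paper does not redo the Lebeau--Robbiano iteration by hand but invokes the abstract observability result of Beauchard and Pravda-Starov (Theorem \ref{08122017T3}, i.e. Theorem 2.1 in \cite{MR3732691}, in the spirit of Miller \cite{MR2679651}) with $a=1$, $b=2s$, $m=1+2rs$, which is exactly where the requirement $a<b$, i.e. $s>1/2$, and the exponent $\frac{am}{b-a}=\frac{1+2rs}{2s-1}$ come from. If you insist on carrying out the telescoping yourself, beware that your choice $\Lambda_k=K\cdot2^k$ with dyadic slabs $\vert I_k\vert\sim T2^{-k}$ does not close in general: the gain $\Lambda_k^{2s}\vert I_k\vert^{1+2rs}$ must dominate the cost $\Lambda_k$, which forces $\Lambda_k\gtrsim(2^k/T)^{\frac{1+2rs}{2s-1}}$, that is a geometric ratio $2^{\frac{1+2rs}{2s-1}}$ rather than $2$ (with ratio $2$ the comparison fails for large $k$ as soon as $2+2rs-2s>0$, e.g. whenever $r\geq1$). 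With that scaling corrected, or more simply by citing the abstract theorem, your argument coincides with the paper's proof.
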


\begin{rk} Let $s>1/2$ and $\omega\subset\mathbb R^n$ be a $(\gamma,a)$-thick set. Theorem \ref{23052018T1} applied in the case of the fractional heat equation states that there exists a positive constant $C>1$ such that for all $T>0$ and $u\in L^2(\mathbb R^n)$,
\begin{equation}\label{14062020E1}
	\Vert e^{-T(-\Delta)^s}u\Vert^2_{L^2(\mathbb{R}^n)}\le C\exp\bigg(\frac C{T^{\frac1{2s-1}}}\bigg)\int_0^T\Vert e^{-t(-\Delta)^s}u\Vert^2_{L^2(\omega)}\ dt.
\end{equation}
Observability estimates for this semigroup in any positive times have already been established in the work \cite{NTTV} (Theorem 3.8), where the authors proved that there exist some positive constants $C_1,C_2,C_3,C_4>0$ such that for all $T>0$ and $u\in L^2(\mathbb R^n)$,
\begin{equation}\label{14062020E2}
	\Vert e^{-T(-\Delta)^s}u\Vert^2_{L^2(\mathbb{R}^n)}\le C^2_{obs}\int_0^T\Vert e^{-t(-\Delta)^s}u\Vert^2_{L^2(\omega)}\ dt,
\end{equation}
the positive constant $C_{obs}^2>0$ being given by
$$C^2_{obs} = \frac{C_1}{\gamma^{C_2n}T}\exp\bigg(\frac{C_3(\vert a\vert_1\ln(C_4^n/\gamma))^{\frac{2s}{2s-1}}}{T^{\frac1{2s-1}}}\bigg),$$
with $\vert a\vert_1 = a_1+\ldots+a_n$. Notice that the constant $C>1$ appearing in the estimate \eqref{14062020E1} could also be expressed in terms of $s>1/2$, $0<\gamma\le 1$ and $a\in(\mathbb R^*_+)^n$. Up to the value of this constant $C>1$, these two observability estimates are equivalent.
\end{rk}

The result of Theorem \ref{05022018T3} can be refined when the operator $\mathcal P$ stands for the fractional Laplacian. Indeed, for all $s>0$, the thickness of the control set $\omega\subset\mathbb R^n$ turns out to be also a necessary condition for the null-controllability of the fractional heat equation \eqref{24052018E4} :

\begin{thm}\label{20082019T1} Let $s>0$, $T>0$ be some positive real numbers and $\omega\subset\mathbb{R}^n$ be a measurable set. If the fractional heat equation \eqref{24052018E4} is null-controllable from the set $\omega$ in time $T>0$, then $\omega$ is thick.
\end{thm}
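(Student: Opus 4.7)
The plan is to argue by contraposition through the Logvinenko--Sereda theorem, in its sharp form due to Kovrijkine: assuming that the fractional heat equation \eqref{24052018E4} is null-controllable from $\omega$ in time $T$, I would derive a Logvinenko--Sereda type inequality for bandlimited functions, which forces $\omega$ to be thick.

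The first step is to rephrase the hypothesis. Since $(-\Delta)^s$ is self-adjoint on $L^2(\mathbb{R}^n)$, the Hilbert Uniqueness Method provides a constant $C_{\mathrm{obs}} > 0$ such that
\begin{equation*}
\|e^{-T(-\Delta)^s}u\|_{L^2(\mathbb{R}^n)}^2 \le C_{\mathrm{obs}} \int_0^T \|e^{-t(-\Delta)^s}u\|_{L^2(\omega)}^2 \, dt, \quad u \in L^2(\mathbb{R}^n).
\end{equation*}
Next, I would specialize this inequality to test functions $u$ whose Fourier transform is supported in a ball $B(0, R)$, for some small $R > 0$ to be fixed later. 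Since $\widehat{e^{-t(-\Delta)^s}u}(\xi) = e^{-t|\xi|^{2s}} \widehat{u}(\xi)$, Plancherel's theorem furnishes on one hand the lower bound $\|e^{-T(-\Delta)^s}u\|_{L^2}^2 \ge e^{-2TR^{2s}} \|u\|_{L^2}^2$, and on the other hand, using the elementary estimate $|e^{-\tau} - 1| \le \tau$ for $\tau \ge 0$, the perturbation bound $\|(e^{-t(-\Delta)^s} - \mathrm{Id})u\|_{L^2} \le t R^{2s} \|u\|_{L^2}$. Combining these via a triangle inequality in $L^2(\omega)$, squaring, integrating over $[0,T]$, and substituting into the observability estimate leads to
\begin{equation*}
e^{-2TR^{2s}} \|u\|_{L^2}^2 \le 2 C_{\mathrm{obs}} T \|u\|_{L^2(\omega)}^2 + \tfrac{2}{3} C_{\mathrm{obs}} T^3 R^{4s} \|u\|_{L^2}^2.
\end{equation*}

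The crucial step is a judicious choice of $R$. Since the coefficient of $\|u\|_{L^2}^2$ on the right-hand side vanishes as $R \to 0^+$ while the one on the left tends to $1$, I would fix $R = R_0 > 0$ small enough to absorb the perturbative term into the left-hand side. This would yield a genuine Logvinenko--Sereda inequality $\|u\|_{L^2(\mathbb{R}^n)}^2 \le K \|u\|_{L^2(\omega)}^2$ valid for every $u \in L^2(\mathbb{R}^n)$ with $\widehat{u}$ supported in $B(0, R_0)$. A direct appeal to Kovrijkine's theorem, which states that such an $L^2$-to-$L^2(\omega)$ inequality on a nonempty space of bandlimited functions is equivalent to the thickness of $\omega$, would then conclude the argument.

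The main obstacle, and simultaneously the reason this proof works for every $s > 0$ (in contrast with Theorem~\ref{05022018T3}, which requires $s > 1/2$), lies in balancing the deterioration of the semigroup lower bound $e^{-2TR^{2s}}$ for large $R$ against the growth of the perturbative estimate in $R$. Taking $R$ small is precisely what reconciles both effects; crucially, even a small ball $B(0, R_0)$ supports a nontrivial space of bandlimited functions, which is all Kovrijkine's theorem requires to recover the thickness of $\omega$.
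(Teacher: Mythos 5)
Your proof is correct, but it takes a genuinely different route from the paper's. The paper argues by contraposition with explicit counterexamples: assuming $\omega$ is not thick, it selects balls $\mathcal B(x_k,k)$ with $\vert\omega\cap\mathcal B(x_k,k)\vert<1/k$ and tests the observability inequality \eqref{29082018E2} on the translates $g_{0,k}=f(\cdot-x_k)$ of the kernel $f=\mathscr F^{-1}(e^{-\vert\xi\vert^{2s}})$; the self-similar evolution $g_k(t,x)=(1+t)^{-\frac n{2s}}f\big((x-x_k)(1+t)^{-\frac1{2s}}\big)$ makes $\Vert g_k(T,\cdot)\Vert_{L^2(\mathbb R^n)}$ independent of $k$ while $\int_0^T\Vert g_k(t,\cdot)\Vert^2_{L^2(\omega)}\,dt\to0$, so observability fails. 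You instead keep the observability estimate and run a low-frequency perturbation argument: the lower bound $e^{-2TR^{2s}}\Vert u\Vert^2_{L^2}$, the bound $\Vert(e^{-t(-\Delta)^s}-\mathrm{Id})u\Vert_{L^2}\le tR^{2s}\Vert u\Vert_{L^2}$ for $\Supp\widehat u\subset B(0,R)$, and the absorption for small $R$ are all correct and legitimately yield a domination inequality $\Vert u\Vert^2_{L^2(\mathbb R^n)}\le K\Vert u\Vert^2_{L^2(\omega)}$ on the Paley--Wiener space with spectrum in $B(0,R_0)$. The one point to watch is your final citation: the result recalled in the paper as Theorem \ref{11052018T1} (Kovrijkine) is only the implication ``thick $\Rightarrow$ spectral inequality'', whereas you need the converse, namely the necessity half of the Logvinenko--Sereda theorem (a dominating set for a Paley--Wiener space whose spectrum has nonempty interior is thick). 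That converse is a genuine classical theorem, so your argument is complete if you cite it as such; but be aware that its standard proof is exactly the translation argument the paper performs by hand (moving a fixed rapidly decaying band-limited function to the centers of the sparse balls), so the two proofs share the same core mechanism. What your route buys is modularity and an intermediate statement of independent interest --- null-controllability in time $T$ forces $\omega$ to be a dominating set for low frequencies, with explicit constants --- and it works for every $s>0$ just as the paper's does; what the paper's route buys is a self-contained, elementary proof that never invokes the converse Logvinenko--Sereda theorem.
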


\begin{rk} Let $s>1/2$ and $\omega\subset\mathbb R^n$. The results of Theorem \ref{05022018T3} and Theorem \ref{20082019T1} imply that for all positive time $T>0$, the fractional heat equation \eqref{24052018E4} is null-controllable from the set $\omega$ in time $T$ if and only if $\omega$ is thick. Therefore, these two theorems extend the result \cite{MR3816981} (Theorem 3) by M. Egidi and I. Veselic.
\end{rk}

\subsection{Global subelliptic estimates} Finally, we derive global $L^2$ subelliptic estimates for the fractional Ornstein-Uhlenbeck operator $\mathcal{P}$. Let $\mathcal{L}$ be the hypoelliptic Ornstein-Uhlenbeck operator defined in \eqref{05052018E1}. In the work \cite{MR2729292}, M. Bramanti, G. Cupini, E. Lanconelli and E. Priola proved global $L^p$ estimates for $\mathcal{L}$, with $1<p<\infty$. More specifically, they showed that for every $1<p<\infty$, there exists a positive constant $C_p>0$ such that for all $u\in C^{\infty}_0(\mathbb{R}^n)$,
$$\Vert\vert\mathbb P_0D_x\vert^2 u\Vert_{L^p(\mathbb{R}^n)}\le C_p\left[\Vert \mathcal{L}u\Vert_{L^p(\mathbb{R}^n)} +\Vert u\Vert_{L^p(\mathbb{R}^n)}\right],$$
where the operator $\vert\mathbb P_0D_x\vert^2$ denotes the Fourier multiplier associated to the symbol $\vert\mathbb P_0\xi\vert^2$, with $\mathbb P_0$ the orthogonal projection onto $V_0$ the vector space defined in \eqref{01062018E2}. This result provides global $L^p$ estimates of the elliptic frequency directions, since the standard symbol of $\mathcal{L}$ is given by
\begin{equation}\label{07052018E2}
	\vert Q^{\frac{1}{2}}\xi\vert^2 + i\langle Bx,\xi\rangle,\quad (x,\xi)\in\mathbb{R}^{2n}.
\end{equation}
However, the result of Bramanti and al. does not provide any control for the degenerate frequency directions $\mathbb{R}^{2n}\setminus V_0$. Despite the operator $\mathcal{L}$ may fail to be elliptic, its hypoelliptic properties induced by the Kalman rank condition allow to expect that some controls of the non-elliptic frequency directions still hold.

In this work, we consider specifically the $L^2$ case and we aim at establishing global $L^2$ subelliptic estimates for $\mathcal{P}$ in all frequency directions :

\begin{thm}\label{28092018T1} Let $\mathcal{P}$ be the fractional Ornstein-Uhlenbeck operator defined in \eqref{06022018E3} and equipped with the domain \eqref{19062018E1}. When the Kalman rank condition \eqref{10052018E4} holds, there exists a positive constant $c>0$ such that for all $u\in D(\mathcal{P})$,
$$\sum_{k=0}^r\Vert\langle\mathbb P_kD_x\rangle^{\frac{2s}{1+2ks}}u\Vert_{L^2(\mathbb{R}^n)}\le c\left[\Vert \mathcal{P}u\Vert_{L^2(\mathbb{R}^n)} + \Vert u\Vert_{L^2(\mathbb{R}^n)}\right],$$
where $\mathbb P_k$ is the orthogonal projection onto the vector subspace $V_k$ defined in \eqref{01062018E2} and $0\le r\le n-1$ is the smallest integer satisfying \eqref{05052018E4}.
\end{thm}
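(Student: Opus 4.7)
The plan is to derive the subelliptic estimate from the Gevrey smoothing bounds of Theorem~\ref{30082018T1} by an interpolation argument. Abbreviate $\sigma_k = \frac{1+2ks}{2s}$, so that the target exponent is $\rho_k = 1/\sigma_k = \frac{2s}{1+2ks}$. From Theorem~\ref{30082018T1}, the semigroup satisfies, for every $q > 0$ and $0 < t < t_0$,
\[
\|\langle\mathbb{P}_k D_x\rangle^q e^{-t\mathcal{P}}\|_{L^2(\mathbb R^n) \to L^2(\mathbb R^n)} \le C^{1+q}\, t^{-q\sigma_k}\, q^{q/(2s)}\, e^{\frac{1}{2}\Tr(B)t},
\]
which, combined with the $L^2$-semigroup bound of Theorem~\ref{06022018T1}, extends to all $t > 0$ up to an exponential factor.

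For every $q$ strictly less than $\rho_k$, a sub-borderline estimate is obtained by writing the Duhamel identity $u = e^{-t\mathcal{P}}u + \int_0^t e^{-\tau\mathcal{P}}\mathcal{P}u\, d\tau$ for $u \in D(\mathcal{P})$, applying $\langle\mathbb{P}_k D_x\rangle^q$ to both sides, using the Gevrey smoothing bound (so that $\int_0^t \tau^{-q\sigma_k}d\tau < \infty$), and optimizing in $t$. This yields
\[
\|\langle\mathbb{P}_k D_x\rangle^q u\|_{L^2(\mathbb R^n)} \le c_q\bigl(\|\mathcal{P}u\|_{L^2(\mathbb R^n)} + \|u\|_{L^2(\mathbb R^n)}\bigr),
\]
with the constant $c_q$ blowing up as $q \to \rho_k^-$ because of the logarithmic divergence of $\int_0^t\tau^{-1}d\tau$.

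To cross the borderline $q = \rho_k$, I would invoke interpolation theory. After shifting $\mathcal{P}$ into an m-accretive operator on $L^2(\mathbb R^n)$ (justified by Theorem~\ref{06022018T1}), its complex powers $(I+\mathcal{P})^{-z}$ form an analytic family of bounded operators for $\Re z \ge 0$. Using Balakrishnan's representation
\[
(I+\mathcal{P})^{-1-\epsilon} = \frac{1}{\Gamma(1+\epsilon)}\int_0^{\infty} t^{\epsilon} e^{-t}e^{-t\mathcal{P}}\, dt,
\]
together with the Gevrey bound at $q = \rho_k$ (so that $\|\langle\mathbb{P}_k D_x\rangle^{\rho_k}e^{-t\mathcal{P}}\| \lesssim t^{-1}$), one checks that $\|\langle\mathbb{P}_k D_x\rangle^{\rho_k}(I+\mathcal{P})^{-1-\epsilon}\|_{L^2 \to L^2} < \infty$ for every $\epsilon > 0$. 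Stein's complex interpolation theorem is then applied to the analytic family $z \mapsto \langle\mathbb{P}_k D_x\rangle^{\rho_k z}(I+\mathcal{P})^{-z-\epsilon}$ on a vertical strip, with the required Phragm\'en-Lindel\"of control on the two imaginary lines provided on one side by the uniform $L^2$-bounds on imaginary powers and on the other side by the sub-borderline estimates together with the $q$-uniform Gevrey bound. The resulting boundedness of $\langle\mathbb{P}_k D_x\rangle^{\rho_k}(I+\mathcal{P})^{-1}$ on $L^2$ is equivalent to the announced estimate at $q = \rho_k$. Summing over $k \in \{0, \ldots, r\}$ finally delivers the statement.

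The key difficulty is precisely the borderline passage $q = \rho_k$: the naive Duhamel/resolvent computation diverges logarithmically there, and closing the gap requires the interpolation step above, which makes essential use of the $q$-uniform nature of the Gevrey smoothing estimate of Theorem~\ref{30082018T1} and of the analytic functional calculus available for the m-accretive operator $\mathcal{P}$.
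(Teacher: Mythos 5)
Your sub-borderline step is sound, and your reduction of the endpoint estimate to the boundedness of $\langle\mathbb P_kD_x\rangle^{\rho_k}(I+\mathcal P)^{-1}$ (after an accretive shift) is the right reformulation, but the interpolation step that is supposed to cross the borderline does not deliver it. In the family $T_z=\langle\mathbb P_kD_x\rangle^{\rho_k z}(I+\mathcal P)^{-z-\epsilon}$ on the strip $0\le\Reelle z\le1$, Stein's theorem only produces bounds at \emph{interior} points $z=\theta\in(0,1)$, i.e. boundedness of $\langle\mathbb P_kD_x\rangle^{\rho_k\theta}(I+\mathcal P)^{-\theta-\epsilon}$, which is again strictly sub-borderline and already follows from your Duhamel/Balakrishnan computation; the operator you need, $\langle\mathbb P_kD_x\rangle^{\rho_k}(I+\mathcal P)^{-1}$, corresponds to the boundary point $\Reelle z=1$ together with $\epsilon=0$ and is never reached. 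The obstruction is structural: exponents interpolate affinely, and every boundary-line bound you can actually prove from Theorem \ref{30082018T1} via the Gamma-function representation of negative powers has the form $\langle\mathbb P_kD_x\rangle^{q}(I+\mathcal P)^{-\gamma}$ with $\gamma>q\sigma_k$ \emph{strictly} whenever $q>0$ (the case $q\sigma_k=\gamma$ is exactly the logarithmically divergent one), so no convex combination of such exponent pairs can land on the critical relation $\gamma=q\sigma_k$ with $q>0$. Dropping the $\epsilon$ and taking $\langle\mathbb P_kD_x\rangle^{\rho_k z}(I+\mathcal P)^{-z}$ does not help either: the required bound on the line $\Reelle z=1$ is then precisely the estimate to be proved, so the argument is circular. (In addition, the ``uniform $L^2$-bounds on imaginary powers'' of the non-selfadjoint operator $I+\mathcal P$ are themselves a nontrivial ingredient --- bounded imaginary powers of m-accretive operators on a Hilbert space, with growth $e^{\pi\vert y\vert/2}$ --- which would need a citation; but even granting them the scheme does not close.)

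The paper gains the endpoint by a different mechanism, which you would need to substitute for your Stein step. It applies Theorem \ref{30082018T1} with the fixed integer exponent $\lfloor2s\rfloor+1$, so that $\Vert e^{-t\tilde{\mathcal P}}\Vert_{\mathcal L(L^2(\mathbb R^n),\mathscr H_k)}\le Ct^{-1/\theta}$ with $\mathscr H_k=D(\langle\mathbb P_kD_x\rangle^{\lfloor2s\rfloor+1})$ and $1/\theta=(\lfloor2s\rfloor+1)(\frac1{2s}+k)>1$, where $\tilde{\mathcal P}$ is a maximal accretive shift of $\mathcal P$; it then invokes the abstract real-interpolation result recalled in Proposition \ref{14062020P1} (Proposition 2.7 in \cite{MR3710672}, proved in \cite{MR2523200}, Corollary 5.13) to conclude $D(\mathcal P)\subset(L^2(\mathbb R^n),\mathscr H_k)_{\theta,2}$ --- this is exactly where the borderline is crossed, using only the smoothing rate and m-accretivity, with no functional calculus of $\mathcal P$ --- and finally identifies $(L^2(\mathbb R^n),\mathscr H_k)_{\theta,2}=[L^2(\mathbb R^n),\mathscr H_k]_{\theta}=D(\langle\mathbb P_kD_x\rangle^{\frac{2s}{1+2ks}})$ through the selfadjointness of the Fourier multiplier $\langle\mathbb P_kD_x\rangle$ (Theorem \ref{15062020T1}). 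In short: your outline is correct up to and including the sub-borderline range, but the key endpoint step is a genuine gap, and repairing it requires an endpoint interpolation theorem of the above type rather than complex interpolation of the operator family you propose.
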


This result can be reformulated with the matrices $Q^{\frac12}(B^T)^k$ :

\begin{cor}\label{03102018C1} Let $\mathcal{P}$ be the fractional Ornstein-Uhlenbeck operator defined in \eqref{06022018E3} and equipped with the domain \eqref{19062018E1}. When the Kalman rank condition \eqref{10052018E4} holds, there exists a positive constant $c>0$ such that for all $u\in D(\mathcal{P})$,
$$\sum_{k=0}^{r-1}\Vert\langle Q^{\frac{1}{2}}(B^T)^kD_x\rangle^{\frac{2s}{1+2ks}}u\Vert_{L^2(\mathbb{R}^n)} + \Vert\langle D_x\rangle^{\frac{2s}{1+2rs}}u\Vert_{L^2(\mathbb{R}^n)}
\le c\left[\Vert \mathcal{P}u\Vert_{L^2(\mathbb{R}^n)} + \Vert u\Vert_{L^2(\mathbb{R}^n)}\right],$$
where $0\le r\le n-1$ is the smallest integer satisfying \eqref{05052018E4}.
\end{cor}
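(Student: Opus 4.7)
The strategy is to deduce Corollary \ref{03102018C1} directly from Theorem \ref{28092018T1} by comparing, at the level of Fourier symbols, the quantities $|Q^{\frac12}(B^T)^k\xi|$ and $|\mathbb P_k\xi|$ for each $0\le k\le r-1$. The $k=r$ term in the corollary already coincides with its counterpart in Theorem \ref{28092018T1}, since $\mathbb P_r$ is the identity matrix and therefore $\langle\mathbb P_rD_x\rangle^{\frac{2s}{1+2rs}} = \langle D_x\rangle^{\frac{2s}{1+2rs}}$.

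The key linear-algebraic input is the elementary identity $\Ran(A)^\perp = \Ker(A^T)$, applied to each summand of the definition \eqref{01062018E2} of $V_k$. This yields
$$V_k^\perp = \bigcap_{j=0}^k\Ker\bigl(Q^{\frac12}(B^T)^j\bigr)\subset\Ker\bigl(Q^{\frac12}(B^T)^k\bigr).$$
Consequently $Q^{\frac12}(B^T)^k$ vanishes on $V_k^\perp$ and factors through the orthogonal projection $\mathbb P_k$, so there exists a constant $c_k>0$, depending only on $k$, $B$ and $Q$, such that $|Q^{\frac12}(B^T)^k\xi|\le c_k|\mathbb P_k\xi|$ for every $\xi\in\mathbb R^n$. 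A short monotonicity argument then upgrades this to $\langle Q^{\frac12}(B^T)^k\xi\rangle^\alpha\le (c_k+1)^\alpha\langle\mathbb P_k\xi\rangle^\alpha$ for any $\alpha>0$.

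To conclude, Plancherel's identity turns the pointwise symbol bound (taken with $\alpha = \frac{2s}{1+2ks}$) into the $L^2$-operator estimate
$$\bigl\Vert\langle Q^{\frac12}(B^T)^kD_x\rangle^{\frac{2s}{1+2ks}}u\bigr\Vert_{L^2(\mathbb R^n)}\le (c_k+1)^{\frac{2s}{1+2ks}}\bigl\Vert\langle\mathbb P_kD_x\rangle^{\frac{2s}{1+2ks}}u\bigr\Vert_{L^2(\mathbb R^n)}$$
for each $0\le k\le r-1$. Summing these inequalities, adding the identical $k=r$ term, and applying Theorem \ref{28092018T1} to the resulting right-hand side produces the desired subelliptic estimate. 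There is essentially no analytic obstacle here: the entire argument rests on the orthogonality identity $V_k^\perp = \bigcap_{j\le k}\Ker(Q^{\frac12}(B^T)^j)$ and a routine passage from symbols to Fourier multipliers, so the only point that requires verification is this identity.
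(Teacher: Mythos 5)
Your proof is correct, but it follows a genuinely different route from the paper's. The paper proves Corollary \ref{03102018C1} by re-running the whole interpolation-theory argument of Theorem \ref{28092018T1} a second time: it takes the Gevrey smoothing estimates of Corollary \ref{28092018C1} (the ones stated with the matrices $Q^{\frac12}(B^T)^k$), introduces the Hilbert spaces $\mathscr{H}_k = \{u\in L^2(\mathbb{R}^n) : \langle Q^{\frac12}(B^T)^kD_x\rangle^{\lfloor 2s\rfloor+1}u\in L^2(\mathbb{R}^n)\}$, and invokes Proposition \ref{14062020P1} together with the identification of real and complex interpolation spaces and Theorem \ref{15062020T1} to localize $D(\mathcal{P})$ in $D(\langle Q^{\frac12}(B^T)^kD_x\rangle^{\frac{2s}{1+2ks}})$. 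You instead short-circuit this by deducing the corollary directly from the already-proved Theorem \ref{28092018T1}, via the pointwise symbol bound $\langle Q^{\frac12}(B^T)^k\xi\rangle^{\alpha}\lesssim\langle\mathbb{P}_k\xi\rangle^{\alpha}$, which rests on the identity $V_k^{\perp}=\Ker(Q^{\frac12})\cap\ldots\cap\Ker(Q^{\frac12}(B^T)^k)$ (so that $Q^{\frac12}(B^T)^k\xi = Q^{\frac12}(B^T)^k\mathbb{P}_k\xi$) together with Plancherel; this is exactly the linear-algebraic observation the paper itself uses in \eqref{03092018E2} when proving Corollary \ref{28092018C1}, but you apply it at the level of the final subelliptic estimates rather than at the level of the semigroup smoothing estimates. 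Both arguments are valid; yours is shorter and avoids invoking the interpolation machinery twice, since Theorem \ref{28092018T1} already controls the stronger symbols $\langle\mathbb{P}_k\xi\rangle$, while the paper's route would only be necessary if one wanted the estimate with $Q^{\frac12}(B^T)^k$ independently of the projection version.
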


When the parameter $2s$ is a positive integer, this result proves that $\mathcal{P}$ enjoys a global $L^2$ subelliptic estimate 
$$\Vert\langle D_x\rangle^{2s(1 - \delta)}u\Vert_{L^2(\mathbb{R}^n)}\lesssim\Vert \mathcal{P}u\Vert_{L^2(\mathbb{R}^n)} + \Vert u\Vert_{L^2(\mathbb{R}^n)},$$
with loss of
$$\delta = \frac{2rs}{1+2rs}>0,$$
derivatives compared to the elliptic case. 

\begin{rk} Notice that Corollary \ref{03102018C1} extends the result of Theorem 1.4 in \cite{MR3710672} to a class of accretive quadratic operators associated to quadratic forms with non-trivial singular spaces (see e.g. (1.7) in \cite{MR3710672} for the definition of this notion). Indeed, let us consider $q^w(x,D_x)$ the quadratic operator defined as the Weyl quantization of the quadratic form $q:\mathbb{R}^n_x\times\mathbb{R}^n_{\xi}\rightarrow\mathbb{C}$ defined on the phase space by \eqref{07052018E2}. An immediate computation shows that the singular space of $q$ is $S = \mathbb{R}^n\times\{0\}$, see e.g. \cite{MR3732691} (p.11). Since $q^w(x,D_x) = \mathcal{L} + \frac{1}{2}\Tr(B)$, it follows from Corollary \ref{03102018C1} that there exists a positive constant $c>0$ such that for all $u\in D(\mathcal{L})$,
$$\Vert\Lambda_0^2u\Vert_{L^2(\mathbb{R}^n)} + \sum_{k=1}^r\Vert\Lambda_k^{\frac{2}{2k+1}}u\Vert_{L^2(\mathbb{R}^n)}
\le c\left[\Vert q^w(x,D_x)u\Vert_{L^2(\mathbb{R}^n)} + \Vert u\Vert_{L^2(\mathbb{R}^n)}\right],$$
with the notations of p.624 in \cite{MR3710672}. In addition, it conforts the conjecture made in \cite{MR3710672} that the power over the operator $\Lambda_0$ in \cite{MR3710672} (Theorem 1.4) should be $2$.
\end{rk}

\begin{rk} In the work \cite{MR2257846}, B. Farkas and A. Lunardi provided a sharp embedding for the domain of the hypoelliptic Ornstein-Uhlenbeck operator $\mathcal L$ with invariant measures acting on $L^2$ spaces, in some anisotropic weighted Sobolev spaces. We will not recall here this result in detail but we point out that the regularity exponents that define the anisotropic weighted Sobolev spaces in question are given by $2/(1+2k)$, with $0\le k\le r$ and $0\le r\le n-1$ the smallest integer satisfying \eqref{05052018E4}, and that these exponents are also the regularity exponents appearing in Theorem \ref{28092018T1} applied with $\mathcal P = \mathcal L$, corresponding to the case when $s=1$.
\end{rk}

An immediate consequence of Corollary \ref{03102018C1} is the following :

\begin{cor}\label{24052018C1} Let $\mathcal{P}$ be the fractional Ornstein-Uhlenbeck operator defined in \eqref{06022018E3} and equipped with the domain \eqref{19062018E1}. When the Kalman rank condition \eqref{10052018E4} holds, there exists a positive constant $c>0$ such that for all $u\in D(\mathcal{P})$,
$$\Vert\langle Bx,\nabla_x\rangle u\Vert_{L^2(\mathbb{R}^n)}
\le c\left[\Vert \mathcal{P}u\Vert_{L^2(\mathbb{R}^n)} + \Vert u\Vert_{L^2(\mathbb{R}^n)}\right].$$
\end{cor}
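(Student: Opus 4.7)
The plan is to exploit the trivial algebraic identity
$$\langle Bx,\nabla_x\rangle u = \mathcal{P}u - \tfrac{1}{2}\Tr^s(-Q\nabla_x^2)u,$$
which comes directly from the definition \eqref{06022018E3} of $\mathcal{P}$. By the triangle inequality, it suffices to control $\|\Tr^s(-Q\nabla_x^2)u\|_{L^2(\mathbb{R}^n)}$ by the right-hand side of the claimed estimate.

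The Fourier multiplier $\Tr^s(-Q\nabla_x^2)$ has symbol $|Q^{\frac12}\xi|^{2s}$. The key elementary observation is that since $Q^{\frac12}$ is symmetric with range $V_0$, one has $Q^{\frac12}\xi=Q^{\frac12}\mathbb P_0\xi$ for every $\xi\in\mathbb R^n$, whence
$$|Q^{\frac12}\xi|^{2s}\leq \|Q^{\frac12}\|^{2s}\,|\mathbb P_0\xi|^{2s}\leq \|Q^{\frac12}\|^{2s}\,\langle \mathbb P_0\xi\rangle^{2s}.$$
Passing to the Fourier side and using Plancherel, this pointwise domination of symbols yields
$$\|\Tr^s(-Q\nabla_x^2)u\|_{L^2(\mathbb R^n)}\leq \|Q^{\frac12}\|^{2s}\,\|\langle \mathbb P_0 D_x\rangle^{2s}u\|_{L^2(\mathbb R^n)}.$$

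Now apply Theorem \ref{28092018T1} (equivalently Corollary \ref{03102018C1}) with index $k=0$, for which the regularity exponent is $\frac{2s}{1+2\cdot 0\cdot s}=2s$. This gives
$$\|\langle \mathbb P_0 D_x\rangle^{2s}u\|_{L^2(\mathbb R^n)}\leq c\bigl[\|\mathcal{P}u\|_{L^2(\mathbb R^n)}+\|u\|_{L^2(\mathbb R^n)}\bigr],$$
which in particular ensures that $\Tr^s(-Q\nabla_x^2)u\in L^2(\mathbb R^n)$ for every $u\in D(\mathcal P)$, so that $\langle Bx,\nabla_x\rangle u$ is meaningfully in $L^2(\mathbb R^n)$ as the difference of two $L^2$ functions. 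Combining the preceding three displays then produces the desired estimate with a new constant.

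There is no significant obstacle here: the whole content of the corollary is already contained in the $k=0$ case of the subelliptic estimate established in Theorem \ref{28092018T1}, and the only thing to verify is the trivial symbolic comparison between $|Q^{\frac12}\xi|^{2s}$ and $\langle \mathbb P_0\xi\rangle^{2s}$. The nontrivial work has been done upstream in proving Theorem \ref{28092018T1}.
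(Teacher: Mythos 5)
Your proposal is correct and follows essentially the same route as the paper: decompose $\langle Bx,\nabla_x\rangle u = \mathcal{P}u - \tfrac12\Tr^s(-Q\nabla_x^2)u$, bound the symbol $\vert Q^{\frac12}\xi\vert^{2s} = \vert Q^{\frac12}\mathbb P_0\xi\vert^{2s}\le \Vert Q^{\frac12}\Vert^{2s}\langle\mathbb P_0\xi\rangle^{2s}$ using $V_0^{\perp}=\Ker Q^{\frac12}$, and conclude via Plancherel and the $k=0$ case of Theorem \ref{28092018T1}. No gaps.
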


\begin{ex} Since the matrices defined by \eqref{08062018E3} satisfy the Kalman rank condition \eqref{10052018E4}, and the associated smallest integer satisfying \eqref{05052018E4} is $r=1$, there exists a positive constant $c>0$ such that for all $u\in D(\mathcal{P})$,
$$\Vert v\cdot(\nabla_x u)\Vert_{L^2(\mathbb{R}^{2n})} + \Vert\langle D_x\rangle^{\frac{2s}{1+2s}}u\Vert_{L^2(\mathbb{R}^{2n})} + \Vert\langle D_v\rangle^{2s}u\Vert_{L^2(\mathbb{R}^{2n})}
\le c\left[\Vert \mathcal{P}u\Vert_{L^2(\mathbb{R}^{2n})} + \Vert u\Vert_{L^2(\mathbb{R}^{2n})}\right],$$
where $\mathcal{P}$ stands for the fractional Kolmogorov operator defined by
$$\mathcal{P} = v\cdot\nabla_x + (-\Delta_v)^s,\quad (x,v)\in\mathbb{R}^{2n}.$$
\end{ex}

\subsubsection*{Outline of the work} In Section \ref{sec_GreofOUs}, we establish the Gevrey smoothing effects for semigroups generated by fractional Ornstein-Uhlenbeck operators under the Kalman rank condition, after checking in Section \ref{sec_frac_OUO} that these semigroups are well-defined. Thanks to these regularizing effects, we study the null-controllability of fractional Ornstein-Uhlenbeck equations in Section \ref{sec_OeffOUs}, and $L^2$ subelliptic estimates enjoyed by fractional Ornstein-Uhlenbeck operators in Section \ref{sec_GLseffOUo}. Section \ref{sec_appendix} is an appendix containing the proofs of some technical results.

\subsubsection*{Notations} The following notations and conventions will be used all over the work :
\begin{enumerate}[label=\textbf{\arabic*.},leftmargin=* ,parsep=2pt,itemsep=0pt,topsep=2pt]
\item The canonical Euclidean scalar product of $\mathbb R^n$ is denoted by $\langle\cdot,\cdot\rangle$ and $\vert\cdot\vert$ stands for the associated canonical Euclidean norm.
\item For all measurable subset $\omega\subset\mathbb R^n$, the inner product of $L^2(\omega)$ is defined for all $u,v\in L^2(\omega)$ by
$$\langle u,v\rangle_{L^2(\omega)} = \int_{\omega}u(x)\overline{v(x)}\ dx,$$
while $\Vert\cdot\Vert_{L^2(\omega)}$ stands for its associated norm.
\item For all function $u\in\mathscr{S}(\mathbb{R}^n)$, the Fourier transform of $u$ is denoted by $\widehat{u}$ or $\mathscr{F}(u)$ while $\mathscr{F}^{-1}(u)$ stands for its inverse Fourier transform and $\mathscr{F}(u)$, $\mathscr{F}^{-1}(u)$ are respectively defined by
$$\widehat{u}(\xi) = \mathscr{F}(u)(\xi) = \int_{\mathbb R^n}e^{-i\langle x,\xi\rangle}u(x)\ dx\quad \text{and}\quad 
\mathscr{F}^{-1}(u)(x) = \frac1{(2\pi)^n}\int_{\mathbb R^n}e^{i\langle x,\xi\rangle}\widehat u(\xi)\ d\xi.$$
With this convention, the Plancherel theorem states that 
$$\forall u\in\mathscr{S}(\mathbb{R}^n),\quad \Vert\widehat u\Vert^2_{L^2(\mathbb{R}^n)} = (2\pi)^n\Vert u\Vert^2_{L^2(\mathbb{R}^n)}.$$
\item The Japanese bracket $\langle\cdot\rangle$ is defined for all $x\in\mathbb R^n$ by $\langle x\rangle = (1+\vert x\vert^2)^{\frac12}$.
\item For all real $n\times n$ matrix $M\in M_n(\mathbb R)$ and all non-negative real number $q\geq0$, $\vert MD_x\vert^q$ and $\langle MD_x\rangle^q$ are the Fourier multipliers associated respectively to the symbols $\vert M\xi\vert^q$ and $\langle M\xi\rangle^q$.
\item For all measurable subset $\omega\subset\mathbb R^n$, $\mathbbm{1}_{\omega}$ stands for the characteristic function of $\omega$.
\end{enumerate}

\section{Fractional Ornstein-Uhlenbeck operators}
\label{sec_frac_OUO}
This section is devoted to the proof of Theorem \ref{06022018T1}.

\subsection{Graph approximation} We begin by studying the graphs of fractional Ornstein-Uhlenbeck operators. We prove that the Schwartz space $\mathscr{S}(\mathbb{R}^n)$ is dense in their domains equipped with the graph norm, by using the classical symbolic calculus and convolution estimates. Then, we compute the adjoints of these operators, and we study their positivity.

\begin{prop}\label{15032018P3} Let $\mathcal{P}$ be the fractional Ornstein-Uhlenbeck operator defined in \eqref{06022018E3} and equipped with the domain \eqref{19062018E1}. Then, for all $u\in D(\mathcal{P})$, there exists $(u_k)_k$ a sequence of $\mathscr{S}(\mathbb{R}^n)$ such that
$$\lim_{k\rightarrow+\infty}u_k = u\quad \text{and}\quad \lim_{k\rightarrow+\infty}\mathcal{P}u_k = \mathcal{P}u\quad \text{in $L^2(\mathbb{R}^n)$}.$$
\end{prop}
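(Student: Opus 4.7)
The plan is to approximate $u \in D(\mathcal{P})$ by a two-stage regularization: first a Fourier cutoff that places $u$ inside $H^{\infty}(\mathbb R^n)$, then a spatial cutoff that makes it compactly supported. Fix $\chi,\psi \in C_c^\infty(\mathbb R^n)$ with $\chi = \psi = 1$ near the origin and set $\chi_N(x) = \chi(x/N)$, $\psi_M(\xi) = \psi(\xi/M)$. I will consider the Schwartz candidates
$$
u_{N,M} := \chi_N\,\psi_M(D_x) u \in C_c^\infty(\mathbb R^n) \subset \mathscr{S}(\mathbb R^n)
$$
and show that a suitable diagonal subsequence of $(u_{N,M})$ converges to $u$ in the graph norm of $\mathcal P$.

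In the first step I let $N = \infty$ and study $v_M := \psi_M(D_x) u$. Since $\widehat{v_M} = \psi_M \widehat u$ is $L^2$ with compact support, $v_M \in H^{\infty}(\mathbb R^n)$, and $v_M \to u$ in $L^2$ by Plancherel and dominated convergence. To show that $\mathcal P v_M$ lies in $L^2$ and converges to $\mathcal P u$, I use that $\psi_M(D_x)$ commutes with the Fourier multiplier $\Tr^s(-Q\nabla_x^2)$, so that
$$
\mathcal P v_M = \psi_M(D_x) \mathcal P u + \bigl[\langle Bx, \nabla_x\rangle, \psi_M(D_x)\bigr] u
$$
as distributions. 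A direct computation on the Fourier side gives
$$
\mathscr{F}\bigl(\bigl[\langle Bx, \nabla_x\rangle, \psi_M(D_x)\bigr] u\bigr)(\xi) = -\langle B^T \xi, \nabla \psi_M(\xi)\rangle\,\widehat u(\xi),
$$
whose symbol is bounded uniformly in $M$ (since $|B^T\xi| \lesssim M$ and $|\nabla\psi_M| = O(M^{-1})$ on the support) and concentrated at frequencies $|\xi| \sim M$. By dominated convergence this commutator lies in $L^2(\mathbb R^n)$ and tends to $0$ in $L^2$ as $M \to \infty$, which simultaneously proves $v_M \in D(\mathcal P)$ and $\mathcal P v_M \to \mathcal P u$ in $L^2$.

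In the second step I fix $M$ and let $N \to \infty$. The crucial gain from Step~1 is that both summands of $\mathcal P v_M$ are individually in $L^2$: the compact Fourier support of $v_M$ forces $\tfrac{1}{2}\Tr^s(-Q\nabla_x^2) v_M \in L^2$, and therefore $\langle Bx, \nabla v_M\rangle = \mathcal P v_M - \tfrac{1}{2}\Tr^s(-Q\nabla_x^2) v_M \in L^2$ as well. Decomposing
$$
\mathcal P(\chi_N v_M) = \tfrac{1}{2}\Tr^s(-Q\nabla_x^2)(\chi_N v_M) + \chi_N\langle Bx, \nabla v_M\rangle + v_M\langle Bx, \nabla \chi_N\rangle,
$$
I treat the three terms separately. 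The middle one converges to $\langle Bx, \nabla v_M\rangle$ in $L^2$ by dominated convergence. The last one satisfies $|Bx||\nabla\chi_N| = O(1)$ on the support of $\nabla\chi_N$, hence is controlled in $L^2$ by $C\|v_M\|_{L^2(\{|x|\sim N\})}$ and tends to $0$. For the first, an iterated Leibniz argument gives $\chi_N v_M \to v_M$ in $H^k(\mathbb R^n)$ for every $k \in \mathbb N$, and combined with the bound $\langle Q\xi,\xi\rangle^s \le \|Q\|^s |\xi|^{2s}$, which yields the continuity $\Tr^s(-Q\nabla_x^2) : H^{2s}(\mathbb R^n) \to L^2(\mathbb R^n)$, this produces the required $L^2$-convergence. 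A standard diagonal extraction then concludes.

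The main obstacle is conceptual: for a generic $u \in D(\mathcal P)$, neither $\Tr^s(-Q\nabla_x^2) u$ nor $\langle Bx, \nabla_x\rangle u$ need lie in $L^2(\mathbb R^n)$, only their sum does. This rules out a direct commutator estimate between $\mathcal P$ and a pure spatial cutoff applied to $u$. The preliminary Fourier truncation of Step~1 is introduced precisely to restore individual $L^2$-control of both constituents of $\mathcal P v_M$, unlocking the otherwise routine cutoff argument in Step~2.
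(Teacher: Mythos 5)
Your proof is correct, and it reaches the conclusion by a genuinely different organization than the paper's. The paper uses a single coupled cutoff $\psi_k(x,D_x)=\psi(\vert x\vert^2/k^2)\psi(D_x^2/k^{2\alpha})$ applied directly to $u$, and the main technical work is the commutator of $\Tr^s(-Q\nabla_x^2)$ with the spatial cutoff acting on the frequency-truncated function: it is written as a convolution on the Fourier side, estimated via Young's inequality and the elementary bound $\left\vert\vert\xi\vert^{2s}-\vert\eta\vert^{2s}\right\vert\lesssim\vert\xi-\eta\vert^{2s}+\vert\eta\vert^{2s-1}\vert\xi-\eta\vert$ of Lemma \ref{16032018L1}, and it tends to zero only because the frequency scale $k^{\alpha}$ is chosen with $(2s-1)\alpha<1$, which produces the decaying factor $k^{-1+(2s-1)\alpha}$. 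You instead decouple the two truncations and glue them with a diagonal extraction: the preliminary Fourier cutoff puts $v_M=\psi_M(D_x)u$ in $H^{\infty}(\mathbb{R}^n)$, so that $\Tr^s(-Q\nabla_x^2)v_M$ and $\langle Bx,\nabla_x v_M\rangle$ are individually in $L^2(\mathbb{R}^n)$, and the subsequent spatial cutoff then only needs the trivial continuity $\Tr^s(-Q\nabla_x^2):H^{2s}(\mathbb{R}^n)\rightarrow L^2(\mathbb{R}^n)$ together with $\chi_N v_M\rightarrow v_M$ in $H^{k}(\mathbb{R}^n)$; no commutator estimate for the fractional part is needed and no relation between the two scales is imposed. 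The drift commutator you compute, whose Fourier transform is $-\langle B^T\xi,\nabla\psi_M(\xi)\rangle\widehat{u}(\xi)$, is the same Fourier-side identity that appears in the paper's treatment of $[\langle Bx,\nabla_x\rangle,\psi(D_x^2/k^{2\alpha})]$. What your route buys is a more elementary argument, dispensing with the convolution estimates, with Lemma \ref{16032018L1}, and with the exponent constraint $(2s-1)\alpha<1$; what the paper's route buys is a single explicit one-parameter approximating sequence $u_k=\psi_k(x,D_x)u$ rather than a two-parameter family requiring a diagonal choice $N(M)$.
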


\begin{proof} For all $k\geq1$, we consider the pseudodifferential operator
\begin{gather}\label{09052018E1}
	\psi_k(x,D_x) = \psi\left(\frac{\vert x\vert^2}{k^2}\right)\psi\left(\frac{D^2_x}{k^{2\alpha}}\right),
\end{gather}
where $D_x = -i\partial_x$, $\psi\in C^{\infty}_0(\mathbb{R})$ is such that $0\le\psi\le1$, $\psi = 1$ on $[-1,1]$ and $\Supp\psi\subset[-2,2]$, and $\alpha>0$ is a positive constant satisfying $(2s-1)\alpha<1$. Notice that for all $k\geq1$, the pseudodifferential operator $\psi_k(x,D_x)$ is acting on $L^2(\mathbb R^n)$ in the following way
$$\forall u\in L^2(\mathbb R^n),\quad \psi_k(x,D_x)u = \frac1{(2\pi)^n}\psi\left(\frac{\vert x\vert^2}{k^2}\right)\int_{\mathbb R^n}e^{i\langle x,\xi\rangle}\psi\left(\frac{\vert\xi\vert^2}{k^{2\alpha}}\right)\widehat u(\xi)\ d\xi.$$
Since $\psi$ is compactly supported, we get that
\begin{equation}\label{23052018E1}
	\forall k\geq1,\quad \psi_k(x,D_x) : L^2(\mathbb{R}^n)\rightarrow C^{\infty}_0(\mathbb{R}^n)\subset\mathscr{S}(\mathbb{R}^n).
\end{equation}
Let us first check that
\begin{gather}\label{15032018E7}
	\forall k\geq1, \forall u\in L^2(\mathbb{R}^n),\quad \lim_{k\rightarrow+\infty}\psi_k(x,D_x)u = u\quad \text{in $L^2(\mathbb{R}^n)$}.
\end{gather}
Let $u\in L^2(\mathbb{R}^n)$. We have that for all $k\geq1$,
\begin{align*}
	&\ \Vert \psi_k(x,D_x)u-u\Vert_{L^2(\mathbb{R}^n)} \\[5pt]
	\le &\ \Vert\psi\left(\frac{\vert x\vert^2}{k^2}\right)\psi\left(\frac{D^2_x}{k^{2\alpha}}\right) u - \psi\left(\frac{\vert x\vert^2}{k^2}\right)u\Vert_{L^2(\mathbb{R}^n)}
	+ \Vert \psi\left(\frac{\vert x\vert^2}{k^2}\right)u - u\Vert_{L^2(\mathbb{R}^n)} \\[5pt]
	\le &\ \Vert\psi\left(\frac{D^2_x}{k^{2\alpha}}\right) u - u\Vert_{L^2(\mathbb{R}^n)} + \Vert \psi\left(\frac{\vert x\vert^2}{k^2}\right)u - u\Vert_{L^2(\mathbb{R}^n)}
	\underset{k\rightarrow+\infty}{\longrightarrow} 0,
\end{align*}
from Lebesgue's dominated convergence theorem and Plancherel's theorem. Thus, \eqref{15032018E7} is proved. Now, we consider $u\in D(\mathcal{P})$ and $u_k = \psi_k(x,D_x)u$ for all $k\geq1$. As a consequence of \eqref{23052018E1} and \eqref{15032018E7}, $(u_k)_k$ is a sequence of Schwartz functions that converges to $u$ in $L^2(\mathbb{R}^n)$. Since $\mathcal{P}u\in L^2(\mathbb{R}^n)$ by definition of $D(\mathcal{P})$, we can apply once again \eqref{15032018E7} to get that
$$\lim_{k\rightarrow+\infty}\psi_k(x,D_x)\mathcal{P}u = \mathcal{P}u\quad \text{in $L^2(\mathbb{R}^n)$}.$$
If the operators $\psi_k(x,D_x)$ and $\mathcal{P}$ were commutative, the proposition would be proven. It is not the case but to conclude, it is sufficient to check that 
\begin{gather}\label{09052018E9}
	\lim_{k\rightarrow+\infty}[\mathcal{P},\psi_k(x,D_x)]u = 0\quad \text{in $L^2(\mathbb{R}^n)$}.
\end{gather}
We write
$$[\mathcal{P},\psi_k(x,D_x)] = [\langle Bx,\nabla_x\rangle,\psi_k(x,D_x)] + \frac{1}{2}[\Tr^s(-Q\nabla^2_x),\psi_k(x,D_x)],$$
and consider the two commutators separately. \\[5pt]
\textbf{1.} By definition of the commutator, 
\begin{equation}\label{18062018E7}
	[\langle Bx,\nabla_x\rangle,\psi_k(x,D_x)]u = \langle Bx,\nabla_x(\psi_k(x,D_x)u)\rangle - \psi_k(x,D_x)\langle Bx,\nabla_xu\rangle.
\end{equation}
First, we notice that
\begin{align}\label{18062018E1}
	&\ \langle Bx,\nabla_x(\psi_k(x,D_x)u)\rangle = \langle Bx,\nabla_x\left[\psi\left(\frac{\vert x\vert^2}{k^2}\right)\psi\left(\frac{D^2_x}{k^{2\alpha}}\right)u\right]\rangle \\[5pt]
	= &\ \langle Bx,\nabla_x\left[\psi\left(\frac{\vert x\vert^2}{k^2}\right)\right]\rangle\psi\left(\frac{D^2_x}{k^{2\alpha}}\right)u
	+ \psi\left(\frac{\vert x\vert^2}{k^2}\right)\langle Bx,\psi\left(\frac{D^2_x}{k^{2\alpha}}\right)\nabla_xu\rangle \nonumber \\[5pt]
	= &\ \frac{2}{k^2}\langle Bx,x\rangle\psi'\left(\frac{\vert x\vert^2}{k^2}\right)\psi\left(\frac{D^2_x}{k^{2\alpha}}\right)u
	+ \psi\left(\frac{\vert x\vert^2}{k^2}\right)\langle Bx,\psi\left(\frac{D^2_x}{k^{2\alpha}}\right)\nabla_xu\rangle. \nonumber
\end{align}
The last term of the previous equality also writes as
\begin{multline}\label{18062018E8}
	\psi\left(\frac{\vert x\vert^2}{k^2}\right)\langle Bx,\psi\left(\frac{D^2_x}{k^{2\alpha}}\right)\nabla_xu\rangle \\[5pt]
	= - \psi\left(\frac{\vert x\vert^2}{k^2}\right)\mathscr{F}^{-1}\left(\frac{2}{k^{2\alpha}}\langle B^T\xi,\xi\rangle\psi'\left(\frac{\vert\xi\vert^2}{k^{2\alpha}}\right)\widehat{u}\right)
+ \psi_k(x,D_x)\langle Bx,\nabla_xu\rangle.
\end{multline}
Indeed, it follows from a direct computation that
\begin{align*}
	&\ \mathscr{F}\left(\langle Bx,\psi\left(\frac{D^2_x}{k^{2\alpha}}\right)\nabla_xu\rangle\right)
	= \langle iB\nabla_{\xi},\psi\left(\frac{\vert\xi\vert^2}{k^{2\alpha}}\right)i\xi\widehat{u}\rangle 
	=  - \langle \nabla_{\xi},\psi\left(\frac{\vert\xi\vert^2}{k^{2\alpha}}\right)B^T\xi\widehat{u}\rangle \\[5pt]
	= &\ -\langle B^T\xi,\nabla_{\xi}\left[\psi\left(\frac{\vert\xi\vert^2}{k^{2\alpha}}\right)\widehat{u}\right]\rangle 
	- \psi\left(\frac{\vert\xi\vert^2}{k^{2\alpha}}\right)\langle\nabla_{\xi},B^T\xi\rangle\widehat{u} \\[5pt]
	= &\ -\langle B^T\xi,\nabla_{\xi}\left[\psi\left(\frac{\vert\xi\vert^2}{k^{2\alpha}}\right)\right]\rangle \widehat{u}
	- \psi\left(\frac{\vert\xi\vert^2}{k^{2\alpha}}\right)\left[\langle B^T\xi,\nabla_{\xi}\widehat{u}\rangle
	+ \Tr(B)\widehat{u}\right] \\[5pt]
	= &\ -\frac{2}{k^{2\alpha}}\langle B^T\xi,\xi\rangle\psi'\left(\frac{\vert\xi\vert^2}{k^{2\alpha}}\right)\widehat{u}
	- \psi\left(\frac{\vert\xi\vert^2}{k^{2\alpha}}\right)\left[\langle B^T\xi,\nabla_{\xi}\widehat{u}\rangle
	+ \Tr(B)\widehat{u}\right],
\end{align*}
that is
$$\langle Bx,\psi\left(\frac{D^2_x}{k^{2\alpha}}\right)\nabla_xu\rangle =
-\mathscr{F}^{-1}\left(\frac{2}{k^{2\alpha}}\langle B^T\xi,\xi\rangle\psi'\left(\frac{\vert\xi\vert^2}{k^{2\alpha}}\right)\widehat{u}\right)
+ \psi\left(\frac{D^2_x}{k^{2\alpha}}\right)\langle Bx,\nabla_xu\rangle,$$
since we also have that
\begin{multline*}
	\mathscr{F}\left(\langle Bx,\nabla_xu\rangle\right) 
	= \langle iB\nabla_{\xi},i\xi\widehat{u}\rangle 
	= -\langle\nabla_{\xi},B^T\xi\widehat{u}\rangle \\
 	= -\langle B^T\xi,\nabla_{\xi}\widehat u\rangle - \langle\nabla_{\xi},B^T\xi\rangle\widehat{u} 
	= -\langle B^T\xi,\nabla_{\xi}\widehat u\rangle - \Tr(B)\widehat{u}.
\end{multline*}
It follows from \eqref{09052018E1}, \eqref{18062018E7}, \eqref{18062018E1} and \eqref{18062018E8} that for all $k\geq1$,
\begin{multline}\label{18062018E10}
	[\langle Bx,\nabla_x\rangle,\psi_k(x,D_x)]u \\
	= \frac{2}{k^2}\langle Bx,x\rangle\psi'\left(\frac{\vert x\vert^2}{k^2}\right)\psi\left(\frac{D^2_x}{k^{2\alpha}}\right)u
	- \psi\left(\frac{\vert x\vert^2}{k^2}\right)\mathscr{F}^{-1}\left(\frac{2}{k^{2\alpha}}\langle B^T\xi,\xi\rangle\psi'\left(\frac{\vert\xi\vert^2}{k^{2\alpha}}\right)\widehat{u}\right).
\end{multline}
Now, let us prove the following convergence
\begin{equation}\label{18062018E2}
	\lim_{k\rightarrow+\infty}\Vert\frac{2}{k^2}\langle Bx,x\rangle\psi'\left(\frac{\vert x\vert^2}{k^2}\right)\psi\left(\frac{D^2_x}{k^{2\alpha}}\right)u\Vert_{L^2(\mathbb{R}^n)} = 0.
\end{equation}
On the one hand, since $\psi'$ is bounded and $\psi'(0) = 0$, we get by homogeneity that
\begin{equation}\label{18062018E3}
	\sup_{k\geq1}\Vert\frac{2}{k^2}\langle Bx,x\rangle\psi'\left(\frac{\vert x\vert^2}{k^2}\right)\Vert_{L^{\infty}(\mathbb{R}^n)}<+\infty,
\end{equation}
and 
\begin{equation}\label{18062018E4}
	\forall x\in\mathbb{R}^n,\quad \frac{2}{k^2}\langle Bx,x\rangle\psi'\left(\frac{\vert x\vert^2}{k^2}\right)\underset{k\rightarrow+\infty}{\longrightarrow}0.
\end{equation}
On the other hand, the following convergence
$$\lim_{k\rightarrow+\infty}\psi\left(\frac{D^2_x}{k^{2\alpha}}\right)u = u\quad \text{in $L^2(\mathbb{R}^n)$},$$
and the classical corollary of the Riesz-Fischer theorem, see e.g. Theorem IV.9 in \cite{MR2759829}, prove that up to an extraction, 
\begin{equation}\label{18062018E5}
	\psi\left(\frac{D^2_x}{k^{2\alpha}}\right)u\underset{k\rightarrow+\infty}{\longrightarrow}u\quad \text{a.e. on $\mathbb{R}^n$},
\end{equation}
and give the existence of $v\in L^2(\mathbb{R}^n)$ such that for all $k$,
\begin{equation}\label{18062018E6}
	\vert\psi\left(\frac{D^2_x}{k^{2\alpha}}\right)u\vert\le\vert v\vert.
\end{equation}
Then, \eqref{18062018E2} is a consequence of \eqref{18062018E3}, \eqref{18062018E4}, \eqref{18062018E5}, \eqref{18062018E6} and the dominated convergence theorem. By arguing in the very same way, we derive that
$$\lim_{k\rightarrow+\infty}\Vert\frac{2}{k^{2\alpha}}\langle B^T\xi,\xi\rangle\psi'\left(\frac{\vert\xi\vert^2}{k^{2\alpha}}\right)\widehat{u}\Vert_{L^2(\mathbb{R}^n)} = 0,$$
and as a consequence of the Plancherel theorem, since $\psi$ is bounded,
\begin{equation}\label{18062018E9}
	\lim_{k\rightarrow+\infty}\Vert\psi\left(\frac{\vert x\vert^2}{k^2}\right)\mathscr{F}^{-1}\left(\frac{2}{k^{2\alpha}}\langle B^T\xi,\xi\rangle\psi'\left(\frac{\vert\xi\vert^2}{k^{2\alpha}}\right)\widehat{u}\right)\Vert_{L^2(\mathbb{R}^n)} = 0.
\end{equation}
Finally, we derive from \eqref{18062018E10}, \eqref{18062018E2} and \eqref{18062018E9} that
\begin{gather}\label{09052018E6}
	\lim_{k\rightarrow+\infty}[\langle Bx,\nabla_x\rangle,\psi_k(x,D_x)]u = 0\quad \text{in $L^2(\mathbb{R}^n)$}.
\end{gather}
\textbf{2.} Now, we prove that
\begin{equation}\label{19062018E3}
	\lim_{k\rightarrow+\infty}[\Tr^s(-Q\nabla^2_x),\psi_k(x,D_x)]u = 0\quad \text{in $L^2(\mathbb{R}^n)$}.
\end{equation}
Since Fourier multipliers are commutative, we have
$$[\Tr^s(-Q\nabla^2_x),\psi_k(x,D_x)]u = [\Tr^s(-Q\nabla^2_x),\psi\left(\frac{\vert x\vert^2}{k^2}\right)]v_k,\quad \text{where}\quad v_k = \psi\left(\frac{D^2_x}{k^{2\alpha}}\right)u,$$
and it follows from the Plancherel theorem that
\begin{multline}\label{16032018E4}
	\Vert[\Tr^s(-Q\nabla^2_x),\psi_k(x,D_x)]u\Vert_{L^2(\mathbb{R}^n)} \\[5pt]
	= \frac{1}{(2\pi)^{\frac{n}{2}}} \Vert\vert Q^{\frac{1}{2}}\xi\vert^{2s} \big(k^n\widehat{\varphi}(k\xi)\ast\widehat{v_k}\big) - k^n\widehat{\varphi}(k\xi)\ast\big(\vert Q^{\frac{1}{2}}\xi\vert^{2s}\widehat{v_k}\big)\Vert_{L^2(\mathbb{R}^n)},
\end{multline}
where
$$\varphi(x) = \psi(\vert x\vert^2),\quad x\in\mathbb{R}^n.$$
Moreover, we have that for all $\xi\in\mathbb{R}^n$,
\begin{multline}\label{16032018E5}
	\vert Q^{\frac{1}{2}}\xi\vert^{2s}\big(k^n\widehat{\varphi}(k\xi)\ast\widehat{v_k}\big) - k^n\widehat{\varphi}(k\xi)\ast\big(\vert Q^{\frac{1}{2}}\xi\vert^{2s}\widehat{v_k}\big) \\
	= \int_{\mathbb{R}^n}k^n\big(\vert Q^{\frac{1}{2}}\xi\vert^{2s}-\vert Q^{\frac{1}{2}}\eta\vert^{2s}\big)\widehat{\varphi}(k(\xi-\eta))\widehat{v_k}(\eta)d\eta.
\end{multline}
When $2s>1$, we use Lemma \ref{16032018L1}, which yields that there exists a positive constant $c>0$ such that
$$\forall\xi,\eta\in\mathbb{R}^n,\quad \left\vert\vert Q^{\frac{1}{2}}\xi\vert^{2s}-\vert Q^{\frac{1}{2}}\eta\vert^{2s}\right\vert\le c\left(\vert\xi-\eta\vert^{2s} + \vert\eta\vert^{2s-1}\vert\xi-\eta\vert\right),$$
to derive from \eqref{16032018E4} and \eqref{16032018E5} that
\begin{multline}\label{16032018E3}
	\Vert[\Tr^s(-Q\nabla^2_x),\psi_k(x,D_x)]u\Vert_{L^2(\mathbb{R}^n)}\le 
	c\Vert k^n\big(\vert\xi\vert^{2s}\vert\widehat{\varphi}(k\xi)\vert\big)\ast\vert\widehat{v_k}\vert\Vert_{L^2(\mathbb{R}^n)} \\[5pt]
	+ c\Vert k^n\big(\vert\xi\vert\vert\widehat{\varphi}(k\xi)\vert\big)\ast\left(\vert\xi\vert^{2s-1}\vert\widehat{v_k}\vert\right)\Vert_{L^2(\mathbb{R}^n)}.
\end{multline}
Yet, as a consequence of Young's inequality and a change of variable, we first get that
\begin{align}\label{16032018E1}
	\Vert k^n\big(\vert\xi\vert^{2s}\vert\widehat{\varphi}(k\xi)\vert\big)\ast\vert\widehat{v_k}\vert\Vert_{L^2(\mathbb{R}^n)}
	& \le \Vert k^n\vert\xi\vert^{2s}\widehat{\varphi}(k\xi)\Vert_{L^1(\mathbb{R}^n)}\Vert\widehat{v_k}\Vert_{L^2(\mathbb{R}^n)} \\[5pt]
	& \le k^{-2s}\Vert\vert\xi\vert^{2s}\widehat{\varphi}(\xi)\Vert_{L^1(\mathbb{R}^n)}\Vert\widehat{u}\Vert_{L^2(\mathbb{R}^n)}\underset{k\rightarrow+\infty}{\longrightarrow}0. \nonumber
\end{align}
It follows from the very same arguments that
\begin{align}\label{16032018E2}
	&\ \Vert k^n\big(\vert\xi\vert\vert\widehat{\varphi}(k\xi)\vert\big)\ast\big(\vert\xi\vert^{2s-1}\vert\widehat{v_k}\vert\big)\Vert_{L^2(\mathbb{R}^n)} \\[5pt]
	\le &\ \Vert k^n\vert\xi\vert \widehat{\varphi}(k\xi)\Vert_{L^1(\mathbb{R}^n)}\Vert\vert\xi\vert^{2s-1}\varphi\left(k^{-\alpha}\xi\right)\widehat{u}\Vert_{L^2(\mathbb{R}^n)}\nonumber \\[5pt]
	\le &\ k^{-1+(2s-1)\alpha}\Vert\vert\xi\vert\widehat{\varphi}(\xi)\Vert_{L^1(\mathbb{R}^n)}
	\Vert\vert\xi\vert^{2s-1}\varphi(\xi)\Vert_{L^{\infty}(\mathbb{R}^n)}\Vert\widehat{u}\Vert_{L^2(\mathbb{R}^n)}\underset{k\rightarrow+\infty}{\longrightarrow}0, \nonumber
\end{align}
since $(2s-1)\alpha<1$. Then, \eqref{19062018E3} follows from (\ref{16032018E3}), (\ref{16032018E1}) and (\ref{16032018E2}). \\[5pt]
When $0<2s\le 1$, Lemma \ref{16032018L1} yields that there exists a positive constant $c>0$ such that
\begin{gather}\label{16032018E6}
	\forall\xi,\eta\in\mathbb{R}^n,\quad \left\vert\vert Q^{\frac{1}{2}}\xi\vert^{2s}-\vert Q^{\frac{1}{2}}\eta\vert^{2s}\right\vert\le c\vert\xi-\eta\vert^{2s}.
\end{gather}
Thus, it follows from \eqref{16032018E4}, \eqref{16032018E5}, \eqref{16032018E1} and \eqref{16032018E6} that
$$\Vert[\Tr^s(-Q\nabla^2_x),\psi_k(x,D_x)]u\Vert_{L^2(\mathbb{R}^n)}\le c\Vert k^n\big(\vert\xi\vert^{2s}\vert\widehat{\varphi}(k\xi)\vert\big)\ast\vert\widehat{v_k}\vert\Vert_{L^2(\mathbb{R}^n)}
\underset{k\rightarrow+\infty}{\longrightarrow}0,$$
and \eqref{19062018E3} is proved in this case. Therefore, we derive \eqref{09052018E9} from \eqref{09052018E6} and \eqref{19062018E3}, which ends the proof of Proposition \ref{15032018P3}.
\end{proof}

Thanks to Proposition \ref{15032018P3}, we can compute explicitly the adjoints of fractional Ornstein-Uhlenbeck operators :

\begin{cor}\label{16032018C2} The adjoint of the the fractional Ornstein-Uhlenbeck operator $\mathcal{P}$ defined in \eqref{06022018E3} and equipped with the domain \eqref{19062018E1} is given by
$$\mathcal{P}^* = \frac{1}{2}\Tr^s(-Q\nabla^2_x) -\langle Bx,\nabla_x\rangle - \Tr(B),$$
with domain 
$$D(\mathcal{P}^*) = \{u\in L^2(\mathbb{R}^n),\quad \mathcal{P}^*u\in L^2(\mathbb{R}^n)\}.$$
\end{cor}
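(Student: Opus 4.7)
The plan is to establish the two inclusions $\tilde{\mathcal{P}}\subset\mathcal{P}^*$ and $\mathcal{P}^*\subset\tilde{\mathcal{P}}$, where $\tilde{\mathcal{P}} := \frac{1}{2}\Tr^s(-Q\nabla^2_x) - \langle Bx,\nabla_x\rangle - \Tr(B)$ is equipped with its maximal $L^2$ domain $\{u\in L^2(\mathbb{R}^n) : \tilde{\mathcal{P}}u\in L^2(\mathbb{R}^n)\}$. The main input is Proposition \ref{15032018P3}, which will allow us to reduce the verification of the adjoint identity to a computation on Schwartz functions.

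First, I would check that $\mathscr{S}(\mathbb{R}^n)\subset D(\mathcal{P})\cap D(\tilde{\mathcal{P}})$, which follows at once from the polynomial boundedness of $\langle Q\xi,\xi\rangle^s$ together with the rapid decay of Schwartz functions and their Fourier transforms, and then establish the formal adjoint identity
$$\langle\mathcal{P}u,v\rangle_{L^2(\mathbb{R}^n)} = \langle u,\tilde{\mathcal{P}}v\rangle_{L^2(\mathbb{R}^n)},\quad u,v\in\mathscr{S}(\mathbb{R}^n).$$
The Fourier multiplier $\frac{1}{2}\Tr^s(-Q\nabla_x^2)$ is symmetric on $\mathscr{S}(\mathbb{R}^n)$ via Plancherel since its symbol $\langle Q\xi,\xi\rangle^s$ is real and nonnegative. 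For the drift, integration by parts yields $\langle\langle Bx,\nabla_x\rangle u,v\rangle = \langle u,-\langle Bx,\nabla_x\rangle v - \Tr(B)v\rangle$, using that $B$ is real and $\Div(Bx) = \Tr(B)$. Summing the two contributions gives the claimed identity.

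For the inclusion $\tilde{\mathcal{P}}\subset\mathcal{P}^*$, fix $v\in D(\tilde{\mathcal{P}})$. Interpreting $\tilde{\mathcal{P}}v$ as a tempered distribution coincides with its $L^2$ meaning, so the Schwartz formula extends to $\langle\mathcal{P}u,v\rangle_{L^2} = \langle u,\tilde{\mathcal{P}}v\rangle_{L^2}$ for every $u\in\mathscr{S}(\mathbb{R}^n)$. Given any $u\in D(\mathcal{P})$, Proposition \ref{15032018P3} provides a sequence $(u_k)_k\subset\mathscr{S}(\mathbb{R}^n)$ with $u_k\to u$ and $\mathcal{P}u_k\to\mathcal{P}u$ in $L^2(\mathbb{R}^n)$; passing to the limit produces $\langle\mathcal{P}u,v\rangle = \langle u,\tilde{\mathcal{P}}v\rangle$, hence $v\in D(\mathcal{P}^*)$ with $\mathcal{P}^*v = \tilde{\mathcal{P}}v$.

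For the converse inclusion $\mathcal{P}^*\subset\tilde{\mathcal{P}}$, pick $v\in D(\mathcal{P}^*)$ and set $w := \mathcal{P}^*v\in L^2(\mathbb{R}^n)$. For every $u\in\mathscr{S}(\mathbb{R}^n)\subset D(\mathcal{P})$, the defining identity of the adjoint reads $\langle\mathcal{P}u,v\rangle = \langle u,w\rangle$, while the Schwartz-level formula gives $\langle\mathcal{P}u,v\rangle = \langle u,\tilde{\mathcal{P}}v\rangle_{\mathscr{S},\mathscr{S}'}$. Comparing the two identities and using the density of $\mathscr{S}(\mathbb{R}^n)$ in the testing yields $\tilde{\mathcal{P}}v = w$ as tempered distributions, so $\tilde{\mathcal{P}}v\in L^2(\mathbb{R}^n)$ and $v\in D(\tilde{\mathcal{P}})$. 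The main subtlety I anticipate lies in carefully managing the distributional interpretation of $\tilde{\mathcal{P}}v$ for $v\in L^2(\mathbb{R}^n)$ and ensuring the complex conjugations in the sesquilinear pairing are handled consistently when transferring the real-symbol transposition identity from Schwartz functions to $L^2(\mathbb{R}^n)$; once these points are settled, everything else is a straightforward density argument powered by Proposition \ref{15032018P3}.
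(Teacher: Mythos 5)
Your argument is correct and rests on the same two pillars as the paper's proof: the integration-by-parts identity on $\mathscr{S}(\mathbb{R}^n)$ and the graph density of $\mathscr{S}(\mathbb{R}^n)$ in $D(\mathcal{P})$ provided by Proposition \ref{15032018P3}; your treatment of the inclusion $D(\mathcal{P}^*)\subset D(\tilde{\mathcal{P}})$ is in fact the same as the paper's (test against Schwartz functions and identify $\tilde{\mathcal{P}}v$ with $\mathcal{P}^*v$ in $\mathscr{S}'(\mathbb{R}^n)$). Where you genuinely deviate is the forward inclusion: the paper invokes Proposition \ref{15032018P3} a second time, applied to the operator $\frac{1}{2}\Tr^s(-Q\nabla^2_x)-\langle Bx,\nabla_x\rangle$ (i.e. with $B$ replaced by $-B$, which is legitimate since the proposition needs no hypothesis on $B$), so as to approximate $v\in D(\tilde{\mathcal{P}})$ in its graph norm and pass to the limit in $\langle\mathcal{P}u_k,v_k\rangle_{L^2(\mathbb{R}^n)}=\langle u_k,\tilde{\mathcal{P}}v_k\rangle_{L^2(\mathbb{R}^n)}$, whereas you keep $v$ fixed, justify $\langle\mathcal{P}u,v\rangle_{L^2(\mathbb{R}^n)}=\langle u,\tilde{\mathcal{P}}v\rangle_{L^2(\mathbb{R}^n)}$ for $u\in\mathscr{S}(\mathbb{R}^n)$ by tempered-distribution duality, and then approximate only $u$. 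Your route is lighter on approximation, but it is exactly where the subtlety you flag lives: for $v\in D(\tilde{\mathcal{P}})$ the two pieces $\frac{1}{2}\Tr^s(-Q\nabla^2_x)v$ and $\langle Bx,\nabla_x\rangle v$ are individually only tempered distributions (the pairing with $u$ converges absolutely because $\langle Q\xi,\xi\rangle^s\widehat{v}$ is a polynomially weighted $L^2$ function and $\widehat{u}$ is Schwartz), and only their sum is known to lie in $L^2(\mathbb{R}^n)$; once this bookkeeping and the conjugations are written out, as you indicate, the argument closes. The paper's double approximation buys the convenience of working throughout with $L^2$ pairings of genuine functions, at the price of the second invocation of the core property; your version buys economy at the price of the distributional verification.
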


\begin{proof} Let $\mathcal{Q}$ be the pseudodifferential operator defined by
$$\mathcal{Q} = \frac{1}{2}\Tr^s(-Q\nabla^2_x) - \langle Bx,\nabla_x\rangle - \Tr(B),$$
and equipped with the domain
$$D(\mathcal{Q}) = \{u\in L^2(\mathbb{R}^n),\quad \mathcal{Q}u\in L^2(\mathbb{R}^n)\}.$$
Let $u\in D(\mathcal{P})$ and $v\in D(\mathcal{Q})$. From Proposition \ref{15032018P3} applied respectively to the operators
$$\frac{1}{2}\Tr^s(-Q\nabla^2_x) + \langle Bx,\nabla_x\rangle\quad\text{and}\quad \frac{1}{2}\Tr^s(-Q\nabla^2_x) - \langle Bx,\nabla_x\rangle,$$
there exist some sequences $(u_k)_k$ and $(v_k)_k$ of $\mathscr{S}(\mathbb{R}^n)$ such that
$$\lim_{k\rightarrow+\infty} u_k = u,\quad \lim_{k\rightarrow+\infty}\mathcal{P}u_k = \mathcal{P} u\quad \text{in $L^2(\mathbb{R}^n)$,}$$
and
$$\lim_{k\rightarrow+\infty} v_k = v,\quad \lim_{k\rightarrow+\infty}\mathcal{Q}v_k = \mathcal{Q} v\quad \text{in $L^2(\mathbb{R}^n)$.}$$
Yet, it follows from an integration by parts that
$$\forall k\geq0,\quad \langle \mathcal{P}u_k,v_k\rangle_{L^2(\mathbb{R}^n)} = \langle u_k,\mathcal{Q}v_k\rangle_{L^2(\mathbb{R}^n)},$$
and passing to the limit, we deduce that
$$\langle \mathcal{P}u,v\rangle_{L^2(\mathbb{R}^n)} = \langle u,\mathcal{Q}v\rangle_{L^2(\mathbb{R}^n)}.$$
This equality shows that $D(\mathcal{Q})\subset D(\mathcal{P}^*)$ and $\mathcal{P}^*v = \mathcal{Q}v$ for all $v\in D(\mathcal{Q})$.  
Conversely, if $v\in D(\mathcal{P}^*)$, we get that for all $u\in\mathscr{S}(\mathbb{R}^n)$,
$$\langle\mathcal{Q}v,u\rangle_{\mathscr{S}'(\mathbb{R}^n),\mathscr{S}(\mathbb{R}^n)} 
= \langle v,\overline{\mathcal{P}u}\rangle_{L^2(\mathbb{R}^n)}
= \langle v,\mathcal{P}\overline{u}\rangle_{L^2(\mathbb{R}^n)}
= \langle\mathcal P^*v,\overline{u}\rangle_{L^2(\mathbb{R}^n)}
= \langle\mathcal{P}^*v,u\rangle_{\mathscr{S}'(\mathbb{R}^n),\mathscr{S}(\mathbb{R}^n)},$$
where $\langle\cdot,\cdot\rangle_{\mathscr{S}'(\mathbb{R}^n),\mathscr{S}(\mathbb{R}^n)}$ stands for the duality bracket of $\mathscr{S}'(\mathbb{R}^n)$ and $\mathscr{S}(\mathbb{R}^n)$, which proves that $\mathcal{Q}v = \mathcal{P}^*v\in L^2(\mathbb{R})$ and $D(\mathcal{P}^*)\subset D(\mathcal{Q})$.
\end{proof}

Another consequence of Proposition \ref{15032018P3} is the positivity property of fractional Ornstein-Uhlenbeck operators up to a constant :

\begin{cor}\label{16032018C1} Let $\mathcal{P}$ be the fractional Ornstein-Uhlenbeck operator defined in \eqref{06022018E3} and equipped with the domain \eqref{19062018E1}. Then, we have that for all $u\in D(\mathcal{P})$,
$$\Reelle\langle\mathcal{P}u,u\rangle_{L^2(\mathbb{R}^n)} + \frac{1}{2}\Tr(B)\Vert u\Vert^2_{L^2(\mathbb{R}^n)}\geq0.$$
\end{cor}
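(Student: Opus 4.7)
The plan is to first establish the identity on the Schwartz class, where every manipulation is legitimate, and then extend it to $D(\mathcal{P})$ by the density statement of Proposition \ref{15032018P3}.

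Fix $u\in\mathscr{S}(\mathbb{R}^n)$ and split $\mathcal{P}=\frac{1}{2}\Tr^s(-Q\nabla^2_x)+\langle Bx,\nabla_x\rangle$. For the Fourier multiplier part, Plancherel's theorem gives
$$\Big\langle \tfrac{1}{2}\Tr^s(-Q\nabla_x^2)u,u\Big\rangle_{L^2(\mathbb{R}^n)}=\frac{1}{2(2\pi)^n}\int_{\mathbb{R}^n}|Q^{\frac{1}{2}}\xi|^{2s}\,|\widehat{u}(\xi)|^2\,d\xi\geq 0,$$
since the symbol $\langle Q\xi,\xi\rangle^s=|Q^{\frac{1}{2}}\xi|^{2s}$ is non-negative; in particular this contribution is real. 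For the transport part, the computation $\sum_{i,j}B_{i,j}x_j\partial_{x_i}(|u|^2)=\langle Bx,\nabla_x u\rangle\overline{u}+\overline{\langle Bx,\nabla_x u\rangle}u$ combined with an integration by parts (the boundary terms vanishing because $u\in\mathscr{S}$) yields
$$2\,\Reelle\,\langle\langle Bx,\nabla_x\rangle u,u\rangle_{L^2(\mathbb{R}^n)}=\int_{\mathbb{R}^n}\sum_{i,j}B_{i,j}x_j\partial_{x_i}(|u|^2)\,dx=-\Tr(B)\,\Vert u\Vert_{L^2(\mathbb{R}^n)}^2.$$
Adding the two contributions, I obtain the Schwartz-class version of the statement, actually in the stronger form
$$\Reelle\langle\mathcal{P}u,u\rangle_{L^2(\mathbb{R}^n)}+\tfrac{1}{2}\Tr(B)\,\Vert u\Vert_{L^2(\mathbb{R}^n)}^2=\frac{1}{2(2\pi)^n}\int_{\mathbb{R}^n}|Q^{\frac{1}{2}}\xi|^{2s}|\widehat{u}(\xi)|^2\,d\xi\geq 0.$$

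To pass to a general $u\in D(\mathcal{P})$, apply Proposition \ref{15032018P3} to produce $(u_k)_k\subset\mathscr{S}(\mathbb{R}^n)$ with $u_k\to u$ and $\mathcal{P}u_k\to\mathcal{P}u$ in $L^2(\mathbb{R}^n)$. The $L^2$ inner product being continuous, both $\Reelle\langle\mathcal{P}u_k,u_k\rangle_{L^2(\mathbb{R}^n)}\to\Reelle\langle\mathcal{P}u,u\rangle_{L^2(\mathbb{R}^n)}$ and $\Vert u_k\Vert_{L^2(\mathbb{R}^n)}^2\to\Vert u\Vert_{L^2(\mathbb{R}^n)}^2$, so the non-negativity inequality survives in the limit.

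There is no real obstacle; the only delicate point is making sure that the integration by parts on the drift term is justified, which is immediate in the Schwartz class, and that one has enough approximants to carry the identity from $\mathscr{S}(\mathbb{R}^n)$ to the full domain $D(\mathcal{P})$ — precisely what Proposition \ref{15032018P3} delivers.
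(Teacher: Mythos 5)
Your proof is correct and follows essentially the same route as the paper: establish the inequality (in your case, even the exact identity) on Schwartz functions and pass to general $u\in D(\mathcal{P})$ via the approximation of Proposition \ref{15032018P3} and continuity of the $L^2$ inner product. The only cosmetic difference is that the paper reuses the adjoint formula of Corollary \ref{16032018C2} to handle the drift term, whereas you redo the integration by parts $2\,\Reelle\,\langle\langle Bx,\nabla_x\rangle u,u\rangle_{L^2(\mathbb{R}^n)}=-\Tr(B)\Vert u\Vert^2_{L^2(\mathbb{R}^n)}$ directly, which amounts to the same computation.
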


\begin{proof} Let $u\in D(\mathcal{P})$. From Proposition \ref{15032018P3}, there exists a sequence $(u_k)_k$ of $\mathscr{S}(\mathbb{R}^n)$ such that
$$\lim_{k\rightarrow+\infty} u_k = u,\quad \lim_{k\rightarrow+\infty}\mathcal{P}u_k = \mathcal{P} u\quad \text{in $L^2(\mathbb{R}^n)$.}$$
It follows from Corollary \ref{16032018C2} that for all $k\geq0$,
\begin{multline*}
	\langle\mathcal{P}u_k,u_k\rangle_{L^2(\mathbb{R}^n)} 
	= \frac{1}{2}\Vert\Tr^{\frac{s}{2}}(-Q\nabla^2_x)u_k\Vert^2_{L^2(\mathbb{R}^n)} - \langle u_k,\langle Bx,\nabla_x\rangle u_k\rangle_{L^2(\mathbb{R}^n)} - \Tr(B)\Vert u_k\Vert^2_{L^2(\mathbb{R}^n)} \\[5pt]
	= \Vert\Tr^{\frac{s}{2}}(-Q\nabla^2_x)u_k\Vert^2_{L^2(\mathbb{R}^n)} - \langle u_k,\mathcal{P}u_k\rangle_{L^2(\mathbb{R}^n)} - \Tr(B)\Vert u_k\Vert^2_{L^2(\mathbb{R}^n)}.
\end{multline*}
Therefore, we have that for all $k\geq0$,
$$\Reelle\langle\mathcal{P}u_k,u_k\rangle_{L^2(\mathbb{R}^n)} + \frac{1}{2}\Tr(B)\Vert u_k\Vert^2_{L^2(\mathbb{R}^n)}\geq0,$$
and Corollary \ref{16032018C1} follows passing to the limit.
\end{proof}

\subsection{Generated semigroup} By using some basics of the semigroup theory, we now prove that fractional Ornstein-Uhlenbeck operators generate strongly continuous semigroups. First, we need to check that the operators are densely defined and closed :

\begin{lem}\label{16032018L5} The fractional Ornstein-Uhlenbeck operator $\mathcal{P}$ defined in \eqref{06022018E3} and equipped with the domain \eqref{19062018E1} is densely defined and closed.
\end{lem}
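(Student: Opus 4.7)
The plan is to check the two properties separately, both by elementary arguments that only use the definitions and standard distributional calculus.

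For the density of $D(\mathcal{P})$ in $L^2(\mathbb{R}^n)$, I will show that the Schwartz space satisfies $\mathscr{S}(\mathbb{R}^n)\subset D(\mathcal{P})$. Since $\mathscr{S}(\mathbb{R}^n)$ is stable under multiplication by polynomials and under differentiation, the term $\langle Bx,\nabla_x\rangle u$ belongs to $\mathscr{S}(\mathbb{R}^n)\subset L^2(\mathbb{R}^n)$ for every $u\in\mathscr{S}(\mathbb{R}^n)$. For the fractional part $\Tr^s(-Q\nabla_x^2)u$, the symbol $\langle Q\xi,\xi\rangle^s$ is polynomially bounded, so $\langle Q\xi,\xi\rangle^s\widehat u(\xi)$ still lies in $L^2(\mathbb{R}^n)$ (in fact in every $L^p$), and therefore $\mathcal{P}u\in L^2(\mathbb{R}^n)$. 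Density then follows since $\mathscr{S}(\mathbb{R}^n)$ is dense in $L^2(\mathbb{R}^n)$.

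For closedness, the cleanest argument is to interpret $\mathcal{P}$ as a continuous operator on $\mathscr{S}'(\mathbb{R}^n)$: the first-order differential operator $\langle Bx,\nabla_x\rangle$ is continuous on $\mathscr{S}'(\mathbb{R}^n)$, and the Fourier multiplier $\Tr^s(-Q\nabla_x^2)$ is continuous on $\mathscr{S}'(\mathbb{R}^n)$ since its symbol $\langle Q\xi,\xi\rangle^s$ is a tempered smooth function (the multiplication by $\langle Q\xi,\xi\rangle^s$ is continuous on $\mathscr{S}'$ by standard duality from its continuity on $\mathscr{S}$). Now suppose $(u_k)_k\subset D(\mathcal{P})$ satisfies $u_k\to u$ and $\mathcal{P}u_k\to v$ in $L^2(\mathbb{R}^n)$. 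These convergences also hold in $\mathscr{S}'(\mathbb{R}^n)$, so by continuity of $\mathcal{P}$ on $\mathscr{S}'(\mathbb{R}^n)$ we have $\mathcal{P}u_k\to \mathcal{P}u$ in $\mathscr{S}'(\mathbb{R}^n)$. Uniqueness of the limit in $\mathscr{S}'(\mathbb{R}^n)$ forces $\mathcal{P}u=v$, and since $v\in L^2(\mathbb{R}^n)$, this yields $u\in D(\mathcal{P})$ and $\mathcal{P}u=v$, proving closedness.

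The only genuine verification is the continuity of $\Tr^s(-Q\nabla_x^2)$ on $\mathscr{S}'(\mathbb{R}^n)$: one checks that the map $\varphi\mapsto \langle Q\xi,\xi\rangle^s\varphi$ is continuous from $\mathscr{S}(\mathbb{R}^n)$ to itself, because derivatives of $\langle Q\xi,\xi\rangle^s$ are polynomially bounded away from the origin and the singularity at the origin (when $2s$ is not an even integer) is compensated by the rapid decay of $\varphi$. Transposing and composing with the Fourier transform on $\mathscr{S}'(\mathbb{R}^n)$ gives the desired continuity. I expect this symbol continuity to be the only non-entirely-trivial step; apart from that, both assertions follow from routine considerations.
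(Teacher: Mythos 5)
Your density argument is fine and is exactly the paper's (both simply observe $\mathscr{S}(\mathbb{R}^n)\subset D(\mathcal{P})$). The problem is in the closedness part: the "only genuine verification" you single out is false. When $2s$ is not an even integer, the symbol $\langle Q\xi,\xi\rangle^s=\vert Q^{\frac12}\xi\vert^{2s}$ is not $C^{\infty}$ on $\Ker(Q^{\frac12})$, and this is a \emph{local} failure of smoothness that the rapid decay of $\varphi$ at infinity cannot compensate: in dimension one with $Q=1$ and $s=\frac14$, the function $\vert\xi\vert^{\frac12}e^{-\xi^2}$ is not $C^1$ at the origin, hence not in $\mathscr{S}(\mathbb{R})$. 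Worse, in the degenerate situations the paper cares about (e.g.\ the fractional Kolmogorov operator), $\Ker(Q^{\frac12})$ is a whole unbounded subspace, so the non-smoothness is not even confined to the origin. Consequently multiplication by $\langle Q\xi,\xi\rangle^s$ does not map $\mathscr{S}(\mathbb{R}^n)$ into itself, the transposition argument does not produce a continuous extension of $\Tr^s(-Q\nabla^2_x)$ to all of $\mathscr{S}'(\mathbb{R}^n)$ (indeed $(-\Delta)^s$, $s\notin\mathbb{N}$, cannot be applied to arbitrary tempered distributions, e.g.\ polynomials), and your closedness proof as written collapses at its key step.

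The gap is repairable, and the repair is essentially the paper's proof: you do not need $\mathcal{P}$ to act on all of $\mathscr{S}'(\mathbb{R}^n)$, only the map $u\mapsto\mathcal{P}u$ from $L^2(\mathbb{R}^n)$ (norm topology) into $\mathscr{S}'(\mathbb{R}^n)$ (weak topology) to be continuous. For $u\in L^2(\mathbb{R}^n)$, $\mathcal{P}u$ is defined as the tempered distribution $\mathscr{F}^{-1}\big(\frac12\vert Q^{\frac12}\xi\vert^{2s}\widehat u\big)+\langle Bx,\nabla_x\rangle u$, and for a fixed test function $\varphi\in\mathscr{S}(\mathbb{R}^n)$ one has $\langle\mathcal{P}u,\varphi\rangle_{\mathscr{S}'(\mathbb{R}^n),\mathscr{S}(\mathbb{R}^n)}=\langle u,\overline{\mathcal{P}^*\varphi}\rangle_{L^2(\mathbb{R}^n)}$, where $\mathcal{P}^*\varphi$ is the formal adjoint applied to $\varphi$; the point is that $\mathcal{P}^*\varphi$ need not be Schwartz, but it \emph{is} in $L^2(\mathbb{R}^n)$, since the polynomially bounded symbol times the rapidly decaying $\widehat\varphi$ is square integrable. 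This pairing is manifestly continuous in $u$ for the $L^2$ norm, so if $u_k\to u$ and $\mathcal{P}u_k\to v$ in $L^2(\mathbb{R}^n)$, testing against every $\varphi\in\mathscr{S}(\mathbb{R}^n)$ gives $\mathcal{P}u=v$ and hence $u\in D(\mathcal{P})$. This is precisely the argument of the paper, which pairs against $\mathcal{P}^*\varphi$ (computed in Corollary \ref{16032018C2}) rather than asserting continuity of $\mathcal{P}$ on $\mathscr{S}'(\mathbb{R}^n)$.
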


\begin{proof} $\mathcal{P}$ is densely defined since $D(\mathcal{P})$ contains the Schwartz space $\mathscr{S}(\mathbb{R}^n)$. Now, we consider $u,v\in L^2(\mathbb{R}^n)$ and $(u_k)$ a sequence of $D(\mathcal{P})$ such that
$$\lim_{k\rightarrow+\infty}u_k = u\quad \text{and}\quad \lim_{k\rightarrow+\infty}\mathcal{P}u_k = v\quad \text{in $L^2(\mathbb{R}^n)$}.$$
We have that for all $\varphi\in\mathscr{S}(\mathbb{R}^n)$,
$$\langle\mathcal{P}u_k,\varphi\rangle_{\mathscr{S}'(\mathbb{R}^n),\mathscr{S}(\mathbb{R}^n)} 
= \langle u_k, \overline{\mathcal{P}^*\varphi}\rangle_{L^2(\mathbb{R}^n)} \\[5pt]
\underset{k\rightarrow+\infty}{\longrightarrow}\langle u,\overline{\mathcal{P}^*\varphi}\rangle_{L^2(\mathbb{R}^n)} 
= \langle\mathcal{P}u,\varphi\rangle_{\mathscr{S}'(\mathbb{R}^n),\mathscr{S}(\mathbb{R}^n)}.$$
On the other hand, the following convergence holds for all $\varphi\in\mathscr{S}(\mathbb{R}^n)$,
$$\lim_{k\rightarrow+\infty}\langle\mathcal{P}u_k,\varphi\rangle_{\mathscr{S}'(\mathbb{R}^n),\mathscr{S}(\mathbb{R}^n)} = \langle v,\varphi\rangle_{\mathscr{S}'(\mathbb{R}^n),\mathscr{S}(\mathbb{R}^n)},$$
and it implies that $v = \mathcal{P}u$. This shows that $\mathcal{P}$ is a closed operator.
\end{proof}

\begin{prop}\label{16032018P1} The fractional Ornstein-Uhlenbeck operator $\mathcal{P}$ defined in \eqref{06022018E3} and equipped with the domain \eqref{19062018E1} generates a strongly continuous semigroup $(e^{-t\mathcal{P}})_{t\geq0}$ on $L^2(\mathbb{R}^n)$ which satisfies that for all $t\geq0$ and $u\in L^2(\mathbb{R}^n)$,
\begin{gather}\label{09052018E10}
	\Vert e^{-t\mathcal{P}}u\Vert_{L^2(\mathbb{R}^n)}\le e^{\frac{1}{2}\Tr(B)t}\ \Vert u\Vert_{L^2(\mathbb{R}^n)}.
\end{gather}
\end{prop}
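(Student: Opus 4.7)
The plan is to realize $\mathcal P$ as a bounded perturbation (by a real scalar) of the generator of a contraction semigroup, and to obtain the latter via the Lumer--Phillips theorem in Hilbert space. Concretely, I introduce the shifted operator
$$A = \mathcal{P} + \tfrac{1}{2}\Tr(B)\,\I,\qquad D(A)=D(\mathcal{P}),$$
and aim to show that $-A$ generates a strongly continuous contraction semigroup on $L^2(\mathbb{R}^n)$. The desired statement for $\mathcal P$ then follows immediately from
$$e^{-t\mathcal{P}} = e^{\frac{1}{2}\Tr(B)t}\,e^{-tA},$$
since $\tfrac{1}{2}\Tr(B)\,\I$ is a bounded operator commuting with $A$, and the factor $e^{-tA}$ is a contraction.

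To invoke the Lumer--Phillips characterization in Hilbert space, I need to check four items. First, $A$ is densely defined and closed: this is immediate from Lemma \ref{16032018L5} since adding a scalar multiple of the identity preserves both properties. Second, $A$ is accretive: for all $u\in D(A)$,
$$\Reelle\langle Au,u\rangle_{L^2(\mathbb{R}^n)} = \Reelle\langle\mathcal{P}u,u\rangle_{L^2(\mathbb{R}^n)} + \tfrac{1}{2}\Tr(B)\,\Vert u\Vert^2_{L^2(\mathbb{R}^n)} \geq 0$$
by Corollary \ref{16032018C1}. Third, I compute $A^*$ by combining Corollary \ref{16032018C2} with the shift:
$$A^* = \mathcal{P}^* + \tfrac{1}{2}\Tr(B)\,\I = \tfrac{1}{2}\Tr^s(-Q\nabla^2_x) + \langle -Bx,\nabla_x\rangle + \tfrac{1}{2}\Tr(-B)\,\I,$$
so $A^*$ has exactly the same structure as $A$, with $B$ replaced by $-B$ (note that the statement and proof of Corollary \ref{16032018C1} do not depend on any structural hypothesis on $B$). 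Fourth, $A^*$ is therefore accretive by the very same argument applied with $-B$ in place of $B$, since $\tfrac{1}{2}\Tr(-B)$ is precisely the shift that makes the corresponding positivity estimate work.

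With these four items in hand, the Lumer--Phillips theorem (in its Hilbert space form, where maximal accretivity of $A$ is equivalent to accretivity of both $A$ and $A^*$ for a densely defined closed operator) gives that $-A$ generates a strongly continuous contraction semigroup $(e^{-tA})_{t\geq0}$ on $L^2(\mathbb{R}^n)$. Multiplying by the scalar semigroup $e^{\frac{1}{2}\Tr(B)t}$ yields the semigroup $(e^{-t\mathcal{P}})_{t\geq0}$ generated by $\mathcal P$, together with the bound
$$\Vert e^{-t\mathcal{P}}u\Vert_{L^2(\mathbb{R}^n)} \leq e^{\frac{1}{2}\Tr(B)t}\,\Vert u\Vert_{L^2(\mathbb{R}^n)}.$$

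The only genuine subtlety is verifying the adjoint computation and ensuring that the two-sided accretivity suffices to conclude maximal accretivity; the rest is a direct consequence of the preparatory results established earlier. I expect no serious obstacle here, since all technical work (density of $\mathscr{S}(\mathbb{R}^n)$ in the graph, identification of $\mathcal{P}^*$, and positivity up to the trace constant) has already been carried out in Proposition \ref{15032018P3}, Corollary \ref{16032018C2}, and Corollary \ref{16032018C1}.
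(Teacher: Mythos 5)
Your proposal is correct and follows essentially the same route as the paper: you shift by $\tfrac{1}{2}\Tr(B)$ to form exactly the operator $\mathcal{P}_{co}$ used there, verify closedness and density via Lemma \ref{16032018L5}, accretivity of the operator and of its adjoint via Corollaries \ref{16032018C1} and \ref{16032018C2}, and conclude with the Lumer--Phillips theorem before undoing the scalar shift. The paper's proof is the same argument in condensed form.
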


\begin{proof} We consider the operator
$$\mathcal{P}_{co} = \mathcal{P} + \frac{1}{2}\Tr(B) =  \frac{1}{2}\Tr^s(-Q\nabla^2_x) + \langle Bx,\nabla_x\rangle + \frac{1}{2}\Tr(B)$$
equipped with the domain $D(\mathcal{P})$. It follows from Corollary \ref{16032018C2} that the adjoint of $\mathcal{P}_{co}$ is given by 
$$(\mathcal{P}_{co})^* = \frac{1}{2}\Tr^s(-Q\nabla^2_x) -\langle Bx,\nabla_x\rangle - \frac{1}{2}\Tr(B),$$
and Corollary \ref{16032018C1} shows that both $\mathcal{P}_{co}$ and $(\mathcal{P}_{co})^*$ are accretive operators. Therefore, the existence of the strongly continuous contraction semigroup $(e^{-t\mathcal{P}_{co}})_{t\geq0}$ follows from the Lumer-Phillips theorem, see e.g. Chapter 1, Corollary 4.4 in \cite{MR710486}, since $\mathcal{P}_{co}$ is a densely defined closed operator from Lemma \ref{16032018L5}. As a consequence, $\mathcal{P}$ generates a strongly continuous semigroup $(e^{-t\mathcal{P}})_{t\geq0}$ which satisfies \eqref{09052018E10}.
\end{proof}

In the remaining of this subsection, we compute the Fourier transforms of semigroups generated by fractional Ornstein-Uhlenbeck operators. We begin with some formal manipulations to derive a formal expression of these Fourier transforms. Let $u = e^{-t\mathcal P}u_0$ be the mild solution of the equation
$$\left\{\begin{array}{l}
	\partial_tu(t,x) + \frac{1}{2}\Tr^s(-Q\nabla^2_x)u(t,x) + \langle Bx,\nabla_x\rangle u(t,x) = 0, \\[5pt]
	u(0,\cdot) = u_0\in L^2(\mathbb{R}^n).
\end{array}\right.$$
By passing to Fourier side, $\widehat{u}$ is the solution of the Cauchy problem
$$\left\{\begin{array}{l}
	\partial_t\widehat{u}(t,\xi) + \frac{1}{2}\vert Q^{\frac{1}{2}}\xi\vert^{2s}\widehat{u}(t,\xi) - \langle B^T\xi,\nabla_{\xi}\rangle \widehat{u}(t,\xi) - \Tr(B)\widehat{u}(t,\xi) = 0, \\[5pt]
	\widehat{u}(0,\cdot) = \widehat{u_0}.
\end{array}\right.$$
We consider the function $v$ implicitly defined by $\widehat{u}(t,\xi) = v(t,e^{tB^T}\xi)e^{\Tr(B)t}$. An immediate computation shows that $v$ satisfies
$$\left\{\begin{array}{l}
	\partial_tv(t,\eta) + \frac{1}{2}\vert Q^{\frac{1}{2}}e^{-tB^T}\eta\vert^{2s}v(t,\eta) = 0, \\[5pt]
	v(0,\cdot) = \widehat{u_0},
\end{array}\right.$$
and therefore,
$$v(t,\eta) = \exp\left[-\frac{1}{2}\int_0^t\vert Q^{\frac{1}{2}}e^{-\tau B^T}\eta\vert^{2s}d\tau\right]\widehat{u_0}(\eta).$$
Finally, we deduce that the Fourier transform of the function $u$ is given by
$$\widehat{u}(t,\xi) = e^{\Tr(B)t}\exp\left[-\frac{1}{2}\int_0^t\vert Q^{\frac{1}{2}}e^{\tau B^T}\xi\vert^{2s}d\tau\right]\widehat{u_0}(e^{tB^T}\xi).$$
We justify these informal calculations in the following lemma and proposition :

\begin{lem}\label{10052018L1} Let $s>0$, $B$ and $Q$ be real $n\times n$ matrices, where $Q$ is symmetric positive semidefinite. For all $t\geq0$, we consider the bounded operator $T(t):L^2(\mathbb{R}^n)\rightarrow L^2(\mathbb{R}^n)$ defined by
\begin{gather}\label{15032018E1}
	\widehat{T(t)u} = e^{\Tr(B)t}\exp\left[-\frac{1}{2}\int_0^t\vert Q^{\frac{1}{2}}e^{\tau B^T}\cdot\vert^{2s}d\tau\right]\widehat{u}(e^{tB^T}\cdot), \quad u\in L^2(\mathbb{R}^n).
\end{gather}
Then $(T(t))_{t\geq0}$ defines a strongly continuous semigroup on $L^2(\mathbb{R}^n)$ satisfying
\begin{equation}\label{03082018E3}
	\forall t\geq0, \forall u\in L^2(\mathbb{R}^n),\quad \Vert T(t)u\Vert_{L^2(\mathbb{R}^n)}\le e^{\frac{1}{2}\Tr(B)t}\Vert u\Vert_{L^2(\mathbb{R}^n)}.
\end{equation}
\end{lem}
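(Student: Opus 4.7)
The plan is to verify three properties in order: (i) $T(t)$ is bounded on $L^2(\mathbb{R}^n)$ with the claimed norm estimate, (ii) $(T(t))_{t\geq 0}$ satisfies the semigroup identity, and (iii) it is strongly continuous at $t=0$ (which, combined with (ii), yields strong continuity on $[0,+\infty)$).

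For (i), I will use the Plancherel theorem to transfer the norm computation to the Fourier side. Since $Q$ is positive semidefinite and $s>0$, the integrand $|Q^{\frac12}e^{\tau B^T}\xi|^{2s}$ is nonnegative, so the multiplier appearing in \eqref{15032018E1} is bounded by $e^{\Tr(B)t}$ pointwise in $\xi$. Performing the linear change of variable $\eta = e^{tB^T}\xi$ (whose Jacobian is $e^{\Tr(B)t}$) in the Fourier-side integral, the factor $e^{2\Tr(B)t}$ combines with the Jacobian $e^{-\Tr(B)t}$ to yield
$$\Vert\widehat{T(t)u}\Vert^2_{L^2(\mathbb{R}^n)}\le e^{\Tr(B)t}\Vert\widehat u\Vert^2_{L^2(\mathbb{R}^n)},$$
which by Plancherel gives \eqref{03082018E3}. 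In particular $T(t)$ is well-defined as a bounded operator on $L^2(\mathbb R^n)$.

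For (ii), the identity $T(0)=\I$ is immediate from \eqref{15032018E1} since at $t=0$ the multiplier equals $1$ and $e^{0\cdot B^T}$ is the identity. The property $T(t+t')=T(t)T(t')$ is obtained by a direct computation on the Fourier side: applying $T(t)$ to $T(t')u$ produces the prefactor $e^{\Tr(B)(t+t')}$, a composition of dilations $e^{t'B^T}e^{tB^T} = e^{(t+t')B^T}$, and the sum of two exponents. Performing the substitution $\sigma = \tau + t$ in the integral coming from the inner factor $\widehat{T(t')u}(e^{tB^T}\xi)$, the two integrals merge into $-\frac{1}{2}\int_0^{t+t'}|Q^{\frac12}e^{\sigma B^T}\xi|^{2s}d\sigma$, which matches $\widehat{T(t+t')u}(\xi)$ exactly.

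For (iii), the uniform bound $\Vert T(t)\Vert \le e^{\frac12\Tr(B)t}$ on bounded time intervals allows a standard density reduction: it suffices to show $T(t)u\to u$ in $L^2(\mathbb{R}^n)$ as $t\to 0^+$ for $u$ in the dense subset $\mathscr{S}(\mathbb{R}^n)$. Writing $\widehat{T(t)u}(\xi) - \widehat u(\xi) = e^{\Tr(B)t}m_t(\xi)[\widehat u(e^{tB^T}\xi) - \widehat u(\xi)] + [e^{\Tr(B)t}m_t(\xi)-1]\widehat u(\xi)$, where $m_t$ denotes the multiplier in \eqref{15032018E1}, both terms tend to $0$ pointwise as $t\to 0^+$ and are dominated by integrable majorants (using $|m_t|\le 1$, the Schwartz decay of $\widehat u$, and the continuity of $t\mapsto \widehat u(e^{tB^T}\cdot)$ in $L^2$). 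The dominated convergence theorem then yields $\Vert \widehat{T(t)u}-\widehat u\Vert_{L^2}\to 0$, and Plancherel concludes. The one step requiring slight care is verifying the $L^2$-continuity of the linear change of variable $\widehat u(e^{tB^T}\cdot)$, which is standard and follows from density of $C^0_c(\mathbb{R}^n)$ in $L^2(\mathbb R^n)$ combined with uniform continuity of compactly supported continuous functions.
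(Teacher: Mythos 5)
Your proof is correct, and its overall architecture coincides with the paper's: everything is done on the Fourier side via Plancherel, the bound \eqref{03082018E3} comes from the pointwise estimate on the multiplier together with the change of variables $\eta=e^{tB^T}\xi$ whose Jacobian $e^{\Tr(B)t}$ halves the exponent, the semigroup identity is a direct computation using $e^{\tau B^T}e^{tB^T}=e^{(\tau+t)B^T}$ and the shift $\sigma=\tau+t$ in the integral, and strong continuity is reduced by density and the uniform bound to Schwartz data. The one genuine divergence is in how you conclude $\Vert\widehat{T(t)u}-\widehat u\Vert_{L^2}\rightarrow0$ for $u\in\mathscr S(\mathbb R^n)$: the paper first proves convergence of the norms $\Vert\widehat{T(t)u}\Vert_{L^2}\rightarrow\Vert\widehat u\Vert_{L^2}$ (dominated convergence after the change of variables) together with pointwise convergence, and then invokes its appendix Lemma \ref{12022018L1} (almost everywhere convergence plus convergence of the $L^2$ norms implies $L^2$ convergence, a Riesz--Scheff\'e type argument), whereas you split $\widehat{T(t)u}-\widehat u$ into the dilation term and the multiplier term and apply dominated convergence directly, using the Schwartz decay of $\widehat u$ and the uniform bound on $\Vert e^{-tB^T}\Vert$ for $t\in[0,1]$ to produce a $t$-independent square-integrable majorant. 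Your route is slightly more elementary in that it dispenses with the auxiliary measure-theoretic lemma; moreover, your closing observation on the $L^2$-continuity of $v\mapsto v(e^{tB^T}\cdot)$ (by density of compactly supported continuous functions) would even let you treat general $u\in L^2(\mathbb R^n)$ directly, bypassing the Schwartz reduction altogether, since the multiplier term is handled by dominated convergence for any fixed $L^2$ function. The paper's variant, on the other hand, reuses its Lemma \ref{12022018L1} again later (in the proof of Proposition \ref{16032018P2}), which is why it is stated once in the appendix.
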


\begin{proof} The fact that $(T(t))_{t\geq0}$ satisfies the semigroup property, that is 
$$\forall t,s\geq0,\quad T(t+s) = T(t)T(s),$$
follows from a direct computation. We check that it is strongly continuous, i.e.
\begin{gather}\label{14032018E3}
	\forall u\in L^2(\mathbb{R}^n),\quad \lim_{t\rightarrow0^+}\Vert T(t)u-u\Vert_{L^2(\mathbb{R}^n)} = 0.
\end{gather}
Let $u\in\mathscr{S}(\mathbb{R}^n)$. First, we have the following convergence :
\begin{gather}\label{10052018E1}
	\lim_{t\rightarrow0^+}\Vert \widehat{T(t)u}\Vert_{L^2(\mathbb{R}^n)} = \Vert \widehat{u}\Vert_{L^2(\mathbb{R}^n)}.
\end{gather}
Indeed, as a consequence of \eqref{15032018E1} and a change of variable, we get that
\begin{gather}\label{10052018E2}
	\Vert \widehat{T(t)u}\Vert_{L^2(\mathbb{R}^n)} = e^{\frac{1}{2}\Tr(B)t}
\Vert\exp\left[-\frac{1}{2}\int_0^t\vert Q^{\frac{1}{2}}e^{-\tau B^T}\cdot\vert^{2s}d\tau\right]\widehat{u}\Vert_{L^2(\mathbb{R}^n)}.
\end{gather}
Moreover, the following convergence stands almost everywhere on $\mathbb{R}^n$
$$e^{\frac{1}{2}\Tr(B)t}\exp\left[-\frac{1}{2}\int_0^t\vert Q^{\frac{1}{2}}e^{-\tau B^T}\cdot\vert^{2s}d\tau\right]\widehat{u}\underset{t\rightarrow0}{\longrightarrow}\widehat{u},$$
and the following domination holds
$$\exists c>0,\forall t\in[0,1],\quad \vert e^{\frac{1}{2}\Tr(B)t}\exp\left[-\frac{1}{2}\int_0^t\vert Q^{\frac{1}{2}}e^{-\tau B^T}\cdot\vert^{2s}d\tau\right]\widehat{u}\vert\le c \vert\widehat{u}\vert.$$
Therefore, \eqref{10052018E1} is a consequence of the dominated convergence theorem.
Moreover, since $\widehat{u}$ is a continuous function, we have that for almost all $\xi\in\mathbb{R}^n$,
\begin{gather}\label{23052018E2}
	\lim_{t\rightarrow0^+}\widehat{T(t)u}(\xi) = \widehat{u}(\xi).
\end{gather}
Thus, by applying a classical lemma of measure theory (see Lemma \ref{12022018L1} in appendix) and the Plancherel theorem, we get
\begin{gather}\label{23052018E3}
	\lim_{t\rightarrow0^+}\Vert T(t)u-u\Vert_{L^2(\mathbb{R}^n)} = 0.
\end{gather}
When $u\in L^2(\mathbb{R}^n)$, we consider $(u_k)_k$ a sequence of $\mathscr{S}(\mathbb{R}^n)$ converging to $u$ in $L^2(\mathbb{R}^n)$. It follows from \eqref{10052018E2} and the Plancherel theorem that for all $t\geq0$,
\begin{align*}
	\Vert T(t)u-u\Vert_{L^2(\mathbb{R}^n)} & \le \Vert T(t)u-T(t)u_k\Vert_{L^2(\mathbb{R}^n)} + \Vert T(t)u_k - u_k\Vert_{L^2(\mathbb{R}^n)} + \Vert u_k-u\Vert_{L^2(\mathbb{R}^n)} \\[5pt]
	& \le e^{\frac{1}{2}\Tr(B)t}\Vert u_k-u\Vert_{L^2(\mathbb{R}^n)} + \Vert T(t)u_k - u_k\Vert_{L^2(\mathbb{R}^n)} + \Vert u_k-u\Vert_{L^2(\mathbb{R}^n)}.
\end{align*}
Thus, it follows from \eqref{23052018E3} that
$$\limsup_{t\rightarrow0^+}\Vert T(t)u-u\Vert_{L^2(\mathbb{R}^n)}\le 2\Vert u_k-u\Vert_{L^2(\mathbb{R}^n)}\underset{k\rightarrow+\infty}{\longrightarrow}0,$$
and \eqref{14032018E3} is proved. Finally, \eqref{03082018E3} is a straightforward consequence of \eqref{10052018E2} and the Plancherel theorem. This ends the proof of Lemma \ref{10052018L1}.
\end{proof}

\begin{prop}\label{16032018P2} Let $\mathcal{P}$ be the fractional Ornstein-Uhlenbeck operator $\mathcal{P}$ defined in \eqref{06022018E3} and equipped with the domain \eqref{19062018E1}. Then, we have that for all $t\geq0$ and $u\in L^2(\mathbb{R}^n)$,
$$\widehat{e^{-t\mathcal{P}}u} = e^{\Tr(B)t}\exp\left[-\frac{1}{2}\int_0^t\vert Q^{\frac{1}{2}}e^{\tau B^T}\cdot\vert^{2s}d\tau\right]\widehat{u}(e^{tB^T}\cdot).$$
\end{prop}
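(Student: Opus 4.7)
The plan is to identify the semigroup $(T(t))_{t\geq0}$ defined by the explicit Fourier multiplier formula in Lemma \ref{10052018L1} with the semigroup $(e^{-t\mathcal{P}})_{t\geq0}$ produced by Proposition \ref{16032018P1}. Since two strongly continuous semigroups on $L^2(\mathbb{R}^n)$ coincide as soon as their infinitesimal generators agree on a core, it is enough to check that the generator $A$ of $(T(t))_{t\geq0}$ acts as $-\mathcal{P}$ on the Schwartz space $\mathscr{S}(\mathbb{R}^n)$, which is a core of $-\mathcal{P}$ thanks to Proposition \ref{15032018P3}.

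The main step is therefore to fix $u\in\mathscr{S}(\mathbb{R}^n)$ and show that $t^{-1}(T(t)u-u)\to -\mathcal{P}u$ in $L^2(\mathbb{R}^n)$ as $t\to 0^+$. By Plancherel this reduces to the analogous convergence of the Fourier transforms, where $T(t)u$ is explicit. A direct differentiation at $t=0$ yields, pointwise in $\xi\in\mathbb{R}^n$,
$$\partial_t\widehat{T(t)u}(\xi)\big|_{t=0} = \Tr(B)\widehat{u}(\xi) - \tfrac{1}{2}|Q^{\frac{1}{2}}\xi|^{2s}\widehat{u}(\xi) + \langle B^T\xi,\nabla_{\xi}\widehat{u}(\xi)\rangle,$$
and the integration by parts carried out in the proof of Corollary \ref{16032018C2} identifies this expression with $-\widehat{\mathcal{P}u}(\xi)$. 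To upgrade pointwise differentiability into $L^2$ convergence I would majorize the difference quotient uniformly in $t\in(0,1]$ by an $L^2$ function of the form $C(1+|\xi|)^{2s}|\widehat{u}(\xi)| + C|B^T\xi|\,|\nabla_{\xi}\widehat{u}(\xi)|$, using that the exponential factor in $\widehat{T(t)u}$ is bounded by $1$ in modulus, and then apply the dominated convergence theorem, the Schwartz decay of $\widehat u$ providing integrability.

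Once this step is secured, the inclusion $\mathscr{S}(\mathbb{R}^n)\subset D(A)$ with $A|_{\mathscr{S}(\mathbb{R}^n)} = -\mathcal{P}|_{\mathscr{S}(\mathbb{R}^n)}$, combined with Proposition \ref{15032018P3} (which tells us $\mathscr{S}(\mathbb{R}^n)$ is a core of $-\mathcal{P}$), implies that the closed extension $A$ contains $-\mathcal{P}$; equality follows from the standard resolvent argument since $A$ and $-\mathcal{P}$ both generate $C_0$-semigroups and therefore share a common half-plane of resolvent points on which the resolvents of a closed operator are uniquely determined. Equivalently, for every $u\in\mathscr{S}(\mathbb{R}^n)$ the two curves $T(t)u$ and $e^{-t\mathcal{P}}u$ solve the same well-posed Cauchy problem $\dot v = -\mathcal{P}v$, $v(0)=u$, so they coincide, and the uniform bounds $\Vert T(t)\Vert,\Vert e^{-t\mathcal{P}}\Vert\le e^{\frac{1}{2}\Tr(B)t}$ together with the density of $\mathscr{S}(\mathbb{R}^n)$ in $L^2(\mathbb{R}^n)$ extend the identity to all of $L^2(\mathbb{R}^n)$. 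The main technical delicacy is the dominated convergence step turning pointwise Taylor expansion of the Fourier multiplier at $t=0$ into $L^2$ norm convergence; the non-smoothness at $\xi=0$ of $|Q^{\frac12}e^{\tau B^T}\xi|^{2s}$ when $2s\notin\mathbb{N}$ is harmless because the exponential factor is bounded by $1$ and the Schwartz function $\widehat u$ absorbs all the polynomial growth in $\xi$ coming from the symbol.
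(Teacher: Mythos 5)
Your proposal is correct and follows essentially the same route as the paper: differentiate the explicit Fourier formula at $t=0$ for Schwartz data, dominate the difference quotient uniformly in $t$ and pass to the limit in $L^2$ via dominated convergence and Plancherel, then use Proposition \ref{15032018P3} (Schwartz functions form a core) together with closedness of the generator and the standard resolvent/bijectivity argument to conclude $A=-\mathcal{P}$, exactly as in the paper's three steps. The only cosmetic point is that your dominating function should be taken along the flow, i.e.\ with $\widehat{u}$ and $\nabla_{\xi}\widehat{u}$ evaluated at $e^{\tau B^T}\xi$; since $e^{\tau B^T}$ and its inverse are uniformly bounded for $\tau\in[0,1]$ and $\widehat u$ is Schwartz, this still yields an $L^2$ majorant (the paper uses $c_u(1+\vert\xi\vert^n)^{-1}$), so the argument goes through.
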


\begin{proof} We consider $(T(t))_{t\geq0}$ the strongly continuous semigroup defined on $L^2(\mathbb{R}^n)$ by \eqref{15032018E1} and $(A,D(A))$ its infinitesimal generator. It is sufficient to prove that $A = -\mathcal{P}$ to end the proof of Proposition \ref{16032018P2} since it implies that $e^{-t\mathcal{P}}u = T(t)u$ for all $t\geq0$ and $u\in L^2(\mathbb{R}^n)$. \\[5pt]
\textbf{1.} We first check that $\mathscr{S}(\mathbb{R}^n)\subset D(A)$ and 
\begin{equation}\label{03082018E1}
	\forall u\in\mathscr{S}(\mathbb{R}^n),\quad Au = -\mathcal{P}u.
\end{equation}
Let $u\in\mathscr{S}(\mathbb{R}^n)$. It follows from the mean value theorem that
\begin{gather}\label{15032018E2}
	\forall \xi\in\mathbb{R}^n, \forall t\in[0,1],\quad \vert \widehat{T(t)u}(\xi) - \widehat{u}(\xi)\vert\le \sup_{\tau\in[0,1]}\vert\partial_t\!\widehat{T(t)u}(\xi)_{\vert_{t=\tau}}\vert\ \vert t\vert.
\end{gather}
Yet, we get from \eqref{15032018E1} that for all $\tau\in[0,1]$ and $\xi\in\mathbb{R}^n$,
\begin{multline}\label{15032018E3}
	\partial_t\!\widehat{T(t)u}(\xi)_{\vert_{t=\tau}} = e^{\Tr(B)\tau}\exp\left[-\frac{1}{2}\int_0^{\tau}\vert Q^{\frac{1}{2}}e^{\tau' B^T}\xi\vert^{2s}d\tau'\right] \\
	\left(- \frac{1}{2}\vert Q^{\frac{1}{2}}\cdot\vert^{2s}\widehat{u} + \langle B^T\cdot,\nabla_{\xi}\widehat{u}\rangle+\Tr(B)\widehat{u}\right)(e^{\tau B^T}\xi),
\end{multline}
and as
$$\widehat{\mathcal{P}u} = \frac{1}{2}\vert Q^{\frac{1}{2}}\cdot\vert^{2s}\widehat{u} -\langle B^T\cdot,\nabla_{\xi}\widehat{u}\rangle-\Tr(B)\widehat{u},$$
we have that
\begin{gather}\label{10052018E3}
	\partial_t\!\widehat{T(t)u}(\xi)_{\vert_{t=\tau}} = -\widehat{T(\tau)\mathcal{P}u}(\xi).
\end{gather}
Since $\widehat{u}\in\mathscr{S}(\mathbb{R}^n)$, it follows from \eqref{15032018E3} that
$$\exists c_u>0, \forall\xi\in\mathbb{R}^n, \forall\tau\in[0,1],\quad \vert\partial_t\!\widehat{T(t)u}(\xi)_{\vert_{t=\tau}}\vert\le\frac{c_u}{1+\vert\xi\vert^n}.$$
Combining this estimation with \eqref{15032018E2}, we get that
\begin{gather}\label{20032018E1}
	\forall \xi\in\mathbb{R}^n, \forall t\in(0,1],\quad \frac{1}{t}\vert\widehat{T(t)u}(\xi)-\widehat{u}(\xi)\vert\le\frac{c_u}{1+\vert\xi\vert^n}.
\end{gather}
Moreover, we deduce from \eqref{10052018E3} that
$$\forall\xi\in\mathbb{R}^n,\quad \partial_t\!\widehat{T(t)u}(\xi)_{\vert_{t=0}}= -\widehat{\mathcal{P}u}(\xi),$$
and this equality can be written as
\begin{gather}\label{15032018E6}
	\forall\xi\in\mathbb{R}^n,\quad \lim_{t\rightarrow0^+}\left[\frac{1}{t}(\widehat{T(t)u}(\xi)-\widehat{u}(\xi))\right] = -\widehat{\mathcal{P}u}(\xi).
\end{gather}
As a consequence of (\ref{20032018E1}), (\ref{15032018E6}) and the dominated convergence theorem, it follows that
\begin{gather}\label{15032018E5}
	\lim_{t\rightarrow0^+}\left[\frac{1}{t}\Vert\widehat{T(t)u}-\widehat{u}\Vert_{L^2(\mathbb{R}^n)}\right] = \Vert \widehat{\mathcal{P}u}\Vert_{L^2(\mathbb{R}^n)}.
\end{gather}
We deduce from (\ref{15032018E6}), (\ref{15032018E5}), Lemma \ref{12022018L1} and the Plancherel theorem that
$$\lim_{t\rightarrow0^+}\left[\frac{1}{t}(T(t)u-u)\right] = -\mathcal{P}u\quad \text{in $L^2(\mathbb{R}^n)$}.$$
Therefore, $u\in D(A)$ and $Au = -\mathcal{P}u$. This proves that \eqref{03082018E1} holds. \\[5pt]
\textbf{2.} The second step consists in proving that $(-\mathcal{P})\subset A$, that is, $D(\mathcal P)\subset D(A)$ and $Au = -\mathcal Pu$ for all $u\in D(\mathcal P)$. Let $u\in D(\mathcal{P})$. It follows from Proposition~\ref{15032018P3} that there exists $(u_k)_k$ a sequence of Schwartz functions satisfying
\begin{equation}\label{03082018E2}
	\lim_{k\rightarrow+\infty} u_k = u,\quad \lim_{k\rightarrow+\infty}\mathcal{P}u_k = \mathcal{P} u\quad \text{in $L^2(\mathbb{R}^n)$.}
\end{equation}
We deduce from \eqref{03082018E1} that $Au_k = -\mathcal{P}u_k$ for all $k\geq0$ since $u_k\in\mathscr{S}(\mathbb{R}^n)$ and \eqref{03082018E2} implies the following convergence
$$\lim_{k\rightarrow+\infty}(u_k,Au_k) = (u,-\mathcal{P}u)\quad \text{in $L^2(\mathbb{R}^n)\times L^2(\mathbb{R}^n)$.}$$
It follows from the classical corollary of the Hille-Yosida theorem that $A$ is a closed operator, see e.g. \cite{MR710486} (Chapter 1, Corollary 3.8). Therefore, $u\in D(A)$ and $Au = -\mathcal Pu\in L^2(\mathbb{R}^n)$. We proved that $(-\mathcal{P})\subset A$. \\[5pt]
\textbf{3.} Finally, we check that $A\subset (-\mathcal{P})$, that is, $D(A)\subset D(\mathcal P)$ and $-\mathcal Pu=Au$ for all $u\in D(A)$. Since both operators $-\mathcal{P}$ and $A$ are infinitesimal generators of strongly continuous semigroups satisfying from Proposition \ref{16032018P1} and Lemma \ref{10052018L1} that for all $t\geq0$ and $u\in L^2(\mathbb{R}^n)$,
$$\Vert e^{-t\mathcal{P}}u\Vert_{L^2(\mathbb{R}^n)}\le e^{\frac{1}{2}\Tr(B)t}\Vert u\Vert_{L^2(\mathbb{R}^n)}\quad \text{and}\quad 
\Vert e^{tA}u\Vert_{L^2(\mathbb{R}^n)}\le e^{\frac{1}{2}\Tr(B)t}\Vert u\Vert_{L^2(\mathbb{R}^n)},$$
it follows from \cite{MR710486} (Chapter 1, Corollary 3.8) that there exists a real number $\mu>\frac{1}{2}\Tr(B)$  such that the linear operators $-\mathcal{P}-\mu:D(\mathcal{P})\rightarrow L^2(\mathbb{R}^n)$ and $A-\mu:D(A)\rightarrow L^2(\mathbb{R}^n)$ are bijective. Let $u\in D(A)$ and $v = (A-\mu)u\in L^2(\mathbb{R}^n)$. Since the operator $-\mathcal{P}-\mu$ is bijective, there exists a unique $w\in D(\mathcal{P})$ such that $v = (-\mathcal{P}-\mu)w$. By using that $(-\mathcal{P})\subset A$, we deduce that $v = (A-\mu)w$. Since $A-\mu$ is injective and $v=(A-\mu)u=(A-\mu)w$, we get that $u = w\in D(\mathcal{P})$. This implies that $A\subset (-\mathcal{P})$ and then $A = -\mathcal{P}$.
\end{proof}

Theorem \ref{06022018T1} is now a consequence of Propositions \ref{16032018P1} and \ref{16032018P2}.

\section{Gevrey regularizing effects of fractional Ornstein-Uhlenbeck semigroups}
\label{sec_GreofOUs}

In this section, we prove Theorem \ref{30082018T1}. Let $\mathcal{P}$ be the fractional Ornstein-Uhlenbeck operator defined in \eqref{06022018E3} and equipped with the domain \eqref{19062018E1}. We assume that the Kalman rank condition \eqref{10052018E4} holds and we denote by $0\le r\le n-1$ the smallest integer satisfying \eqref{05052018E4}. Moreover, for all $0\le k\le r$, we consider $\mathbb P_k$ the orthogonal projection onto the vector subspace $V_k$ defined in \eqref{01062018E2}.

\subsection{First estimates} The next proposition states that the semigroup $(e^{-t\mathcal{P}})_{t\geq0}$ is smoothing in the Gevrey type space $G^{\frac1{2s}}(\mathbb R^n)$ defined in \eqref{31072018E1}, but only provides rough controls of the associated seminorms :

\begin{prop}\label{15022018P2} There exists a positive constant $C>1$ such that for all $k\in\{0,\ldots,r\}$, $q>0$, $0<t<1$ and $u\in L^2(\mathbb{R}^n)$,
$$\Vert\vert\mathbb{P}_kD_x\vert^q e^{-t\mathcal{P}}u\Vert_{L^2(\mathbb{R}^n)}
\le C^{1+q}\left[\frac{M^s_t}{t^{\frac{1}{2}+k}}\right]^q\ e^{\frac{1}{2}\Tr(B)t}\ q^{\frac{q}{2s}}\ 
\Vert u\Vert_{L^2(\mathbb{R}^n)},$$
where
$$M^s_t = \sup_{\xi\in\mathbb{S}^{n-1}}\left[\int_0^t\vert Q^{\frac{1}{2}}e^{\tau B^T}\xi\vert^2d\tau\right]^{\frac{1}{2}}\left[\int_0^t\vert Q^{\frac{1}{2}}e^{\tau B^T}\xi\vert^{2s}d\tau\right]^{-\frac{1}{2s}}.$$
\end{prop}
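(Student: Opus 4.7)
The plan is to reduce the $L^2$ estimate to a Fourier-side calculation via Theorem~\ref{06022018T1}, and then to a subelliptic-type inequality
\[
\vert\mathbb P_k\theta\vert^2\leq C\, t^{-(1+2k)}\int_0^t\vert Q^{\frac{1}{2}}e^{\tau B^T}\theta\vert^2\,d\tau,\qquad \theta\in\mathbb S^{n-1},\ 0<t\leq t_0,
\]
which itself will be a consequence of a short-time lower bound on the Gramian integral dictated by the Kalman filtration $V_0\subsetneq\cdots\subsetneq V_r$.

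First, Theorem~\ref{06022018T1} and the Plancherel theorem give
\[
\Vert\vert\mathbb P_kD_x\vert^qe^{-t\mathcal P}u\Vert_{L^2(\mathbb R^n)}^2 = \frac{e^{2\Tr(B)t}}{(2\pi)^n}\int_{\mathbb R^n}\vert\mathbb P_k\xi\vert^{2q}\exp\Bigl[-\int_0^t\vert Q^{\frac{1}{2}}e^{\tau B^T}\xi\vert^{2s}d\tau\Bigr]\vert\widehat u(e^{tB^T}\xi)\vert^2 d\xi,
\]
and after pulling out the sup of $|\mathbb P_k\xi|^{2q}\exp[\cdots]$, the change of variable $\eta=e^{tB^T}\xi$ (Jacobian $e^{-\Tr(B)t}$) turns the remaining $\vert\widehat u\vert^2$ integral into $(2\pi)^ne^{-\Tr(B)t}\Vert u\Vert_{L^2}^2$. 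It therefore suffices to establish
\[
\Sigma(t,q):=\sup_{\xi\in\mathbb R^n}\vert\mathbb P_k\xi\vert^{2q}\exp\Bigl[-\int_0^t\vert Q^{\frac{1}{2}}e^{\tau B^T}\xi\vert^{2s}d\tau\Bigr]\leq C^{2(1+q)}\bigl[M^s_t/t^{\frac{1}{2}+k}\bigr]^{2q}q^{q/s}.
\]
Writing $\xi=\rho\theta$ with $\theta\in\mathbb S^{n-1}$ and maximizing $\rho\mapsto\rho^{2q}e^{-\rho^{2s}A(\theta,t)}$, where $A(\theta,t)=\int_0^t\vert Q^{\frac{1}{2}}e^{\tau B^T}\theta\vert^{2s}d\tau$, at the critical point $\rho^{2s}=q/(sA(\theta,t))$ yields
\[
\Sigma(t,q)=e^{-q/s}(q/s)^{q/s}\Bigl[\sup_{\theta\in\mathbb S^{n-1}}\frac{\vert\mathbb P_k\theta\vert}{A(\theta,t)^{1/(2s)}}\Bigr]^{2q}.
\]
Inserting $B(\theta,t)^{1/2}/A(\theta,t)^{1/(2s)}$ with $B(\theta,t)=\int_0^t\vert Q^{\frac{1}{2}}e^{\tau B^T}\theta\vert^2d\tau$ and using the very definition of $M^s_t$ leads to
\[
\sup_{\theta\in\mathbb S^{n-1}}\frac{\vert\mathbb P_k\theta\vert}{A(\theta,t)^{1/(2s)}}\leq M^s_t\cdot\sup_{\theta\in\mathbb S^{n-1}}\frac{\vert\mathbb P_k\theta\vert}{B(\theta,t)^{1/2}},
\]
so everything reduces to the subelliptic bound displayed above.

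That bound is obtained from three chained inequalities. The central one is the short-time lower bound
\[
B(\theta,t)\geq c\sum_{j=0}^rt^{2j+1}\vert Q^{\frac{1}{2}}(B^T)^j\theta\vert^2,\qquad \theta\in\mathbb S^{n-1},\ 0<t\leq t_0,
\]
which I would prove by contradiction after rescaling $\tau=tu$: given sequences $\theta_n\in\mathbb S^{n-1}$, $t_n\to0^+$ violating it, normalize $\tilde f_n(u):=\mu_n^{-1}Q^{\frac{1}{2}}e^{t_nuB^T}\theta_n$ with $\mu_n^2:=\sum_{j=0}^rt_n^{2j}\vert Q^{\frac{1}{2}}(B^T)^j\theta_n\vert^2$. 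The Kalman rank condition \eqref{05052018E4} forces $\max_{j\leq r}\vert Q^{\frac{1}{2}}(B^T)^j\theta_n\vert\geq\delta$, hence $\mu_n\geq\delta\,t_n^r$; consequently the coefficients of $u^j$ in the Taylor expansion of $\tilde f_n$ are bounded for $j\leq r$ and of size $O(t_n^{j-r})$ for $j>r$. A subsequence converges uniformly on $[0,1]$ to a nonzero polynomial of degree $\leq r$ whose $L^2([0,1])$-norm vanishes, which is impossible. The two remaining steps are immediate: for $0<t\leq 1$, $\sum_{j=0}^rt^{2j+1}\vert Q^{\frac{1}{2}}(B^T)^j\theta\vert^2\geq t^{2k+1}\sum_{j=0}^k\vert Q^{\frac{1}{2}}(B^T)^j\theta\vert^2$; and the filtration identity $V_k^\perp=\bigcap_{j=0}^k\Ker(Q^{\frac{1}{2}}(B^T)^j)$ (dual to the definition of $V_k$) implies that the two positive semidefinite quadratic forms $\vert\mathbb P_k\theta\vert^2$ and $\sum_{j=0}^k\vert Q^{\frac{1}{2}}(B^T)^j\theta\vert^2$ vanish on the same subspace of $\mathbb R^n$, so they are equivalent.

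Combining the pieces gives $\Sigma(t,q)\lesssim (q/s)^{q/s}[M^s_t/t^{\frac{1}{2}+k}]^{2q}$, and taking square roots recovers Proposition~\ref{15022018P2}, with $q^{q/(2s)}$ coming from the Gaussian-type optimization in $\rho$ and $t^{-q(\frac{1}{2}+k)}$ from the subelliptic bound. The main obstacle is the short-time lower bound on $B(\theta,t)$: one must control the tail of the Taylor expansion of $Q^{\frac{1}{2}}e^{\tau B^T}\theta$ uniformly in $\theta\in\mathbb S^{n-1}$, and the scaling/compactness argument sketched above is precisely the device that bypasses an explicit (and delicate) termwise remainder estimate.
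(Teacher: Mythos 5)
Your proof is correct and follows the same overall skeleton as the paper's: the Fourier representation of Theorem \ref{06022018T1} plus the Plancherel theorem, a change of variables absorbing one factor $e^{\frac12\Tr(B)t}$, optimization of the radial factor $\rho^{2q}e^{-\rho^{2s}A(\theta,t)}$ producing the $q^{\frac{q}{2s}}$ growth (the paper uses the equivalent elementary bound $x^qe^{-x^{2s}}\le[q/(2es)]^{\frac{q}{2s}}$), and the definition of $M^s_t$ to pass from the $L^{2s}$-in-$\tau$ integral to the $L^2$-in-$\tau$ one. Where you genuinely diverge is the key anisotropic estimate: the paper quotes Lemma 3.1 of \cite{MR1475774}, namely $\Vert\Pi_k e^{-tB^T}(Q_t)^{-\frac12}\Vert\le c\,t^{-(\frac12+k)}$ for the finer projections $\Pi_j$ onto the successive orthogonal complements, and then reassembles $\mathbb P_k=\Pi_0+\ldots+\Pi_k$ via Lemma \ref{05022018L2}; you instead prove directly the equivalent Gramian lower bound $\int_0^t\vert Q^{\frac12}e^{\tau B^T}\theta\vert^2\,d\tau\geq c\,t^{1+2k}\vert\mathbb P_k\theta\vert^2$ by rescaling $\tau=tu$, normalizing by $\mu_n$, controlling the Taylor tail uniformly (it is $O(t_n)$ thanks to the summability of $\Vert B^T\Vert^j/j!$), and running a compactness/contradiction argument, finishing with the equivalence of the two positive semidefinite forms $\vert\mathbb P_k\theta\vert^2$ and $\sum_{j\le k}\vert Q^{\frac12}(B^T)^j\theta\vert^2$, which share the kernel $V_k^{\perp}$. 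Your route is self-contained (no external citation, no intermediate projections) at the price of non-explicit constants, and it is in fact the same scaling-plus-compactness device the paper deploys later for $M^s_t$ (Lemma \ref{lemma_algebra}, Proposition \ref{jenvoiedupateenzero}). One small repair is needed: your Gramian bound is stated only for $0<t\le t_0$, while the proposition claims $0<t<1$; since $t\mapsto\int_0^t\vert Q^{\frac12}e^{\tau B^T}\theta\vert^2\,d\tau$ is nondecreasing, the bound for $t\in[t_0,1)$ follows from the one at $t=t_0$ at the cost of the harmless factor $t_0^{-(1+2k)}\le t_0^{-(1+2r)}$, which is absorbed into $C^{1+q}$.
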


We check in Lemma \ref{19032018L1} further in this section that $M^s_t$ is well-defined and satisfies $0<M^s_t<+\infty$. Moreover, the study of the asymptotics for small times of the term $M^s_t$ appearing in the above statement is also postponed further in this section.

\begin{proof} The key ingredients of this proof are on the one hand the explicit formula for the Fourier transform of the evolution operators $e^{-t\mathcal{P}}$ derived in Theorem \ref{06022018T1} and on the other hand some properties of the orthogonal projections $\Pi_0,\ldots,\Pi_r$ defined by 
\begin{enumerate}
\item[\textbf{1.}] $\Pi_0$ the orthogonal projection onto $V_0$, \\[-9pt]
\item[\textbf{2.}] $\Pi_{k+1}$ the orthogonal projection onto $W_k$ with $V_{k+1} = V_k \overset{\perp}{\oplus} W_k$, for all $0\le k\le r-1$,
\end{enumerate}
where the orthogonality is taken with respect to the canonical Euclidean structure, obtained by A. Lunardi in \cite{MR1475774}. We begin by deriving from the Plancherel theorem and Theorem \ref{06022018T1} that for all $k\in\{0,\ldots,r\}$, $q\geq0$, $0<t<1$ and $u\in \mathscr{S}(\mathbb{R}^n)$,
$$\Vert\vert\Pi_kD_x\vert^qe^{-t\mathcal{P}}u\Vert_{L^2(\mathbb{R}^n)}
\le \frac{e^{\Tr(B)t}}{(2\pi)^{\frac{n}{2}}}\Vert\vert\Pi_k\xi\vert^q \exp\left[-\frac{1}{2}\int_0^t\vert Q^{\frac{1}{2}}e^{\tau B^T}\xi\vert^{2s}\ d\tau\right]\widehat{u}(e^{tB^T}\xi)\Vert_{L^2(\mathbb{R}^n)}.$$
It follows from Lemma 3.1 in \cite{MR1475774} that there exists a positive constant $c>0$ only depending on $B$ and $Q$ such that
$$\forall k\in\{0,\ldots,r\}, \forall t\in(0,1),\quad \Vert\Pi_k e^{-tB^T}(Q_t)^{-\frac12}\Vert\le\frac{c}{t^{\frac{1}{2}+k}},$$
the notation $\Vert\cdot\Vert$ standing for the matrix norm on $M_n(\mathbb R)$ induced by the canonical Euclidean norm $\vert\cdot\vert$ on $\mathbb R^n$ and where the symmetric positive semidefinite matrices $Q_t$ are defined in \eqref{27082018E2}. We recall from the introduction that the non-degeneracy of the matrices $Q_t$ is implied by the Kalman rank condition. Therefore, we have that for all $k\in\{0,\ldots,r\}$, $q>0$, $0<t<1$ and $u\in \mathscr{S}(\mathbb{R}^n)$,
\begin{multline}\label{05022018E9}
	\Vert\vert\Pi_kD_x\vert^qe^{-t\mathcal{P}}u\Vert_{L^2(\mathbb{R}^n)}
	\le\left[\frac{c}{t^{\frac{1}{2}+k}}\right]^q\ \frac{e^{\Tr(B)t}}{(2\pi)^{\frac{n}{2}}} \\
	\Vert\vert (Q_t)^{\frac{1}{2}}e^{tB^T}\xi\vert^q\exp\left[-\frac{1}{2}\int_0^t\vert Q^{\frac{1}{2}}e^{\tau B^T}\xi\vert^{2s}\ d\tau\right]\widehat{u}(e^{tB^T}\xi)\Vert_{L^2(\mathbb{R}^n)}.
\end{multline}
Note that from the definition \eqref{27082018E2} of $Q_t$,
\begin{gather}\label{10052018E7}
	\forall t\in(0,1),\forall\xi\in\mathbb{R}^n,\quad\vert (Q_t)^{\frac{1}{2}}e^{tB^T}\xi\vert^2 = \langle Q_te^{tB^T}\xi,e^{tB^T}\xi\rangle = \int_0^t\vert Q^{\frac{1}{2}}e^{\tau B^T}\xi\vert^2d\tau.
\end{gather}
Let $\xi\in\mathbb{R}^n\setminus\{0\}$, and $(\rho,\sigma)$ be the polar coordinates of $\xi$, i.e. $\xi = \rho\sigma$ with $\rho>0$ and $\sigma\in\mathbb{S}^{n-1}$. Then, it follows from \eqref{10052018E7} and the estimate
$$\forall q>0, \forall x\geq0,\quad x^qe^{-x^{2s}} \le \left[\frac{q}{2es}\right]^{\frac{q}{2s}},$$
that for all $0<t<1$,
\begin{align}\label{10052018E8}
	&\ \vert (Q_t)^{\frac{1}{2}}e^{tB^T}\xi\vert^q\exp\left[-\frac{1}{2}\int_0^t\vert Q^{\frac{1}{2}}e^{\tau B^T}\xi\vert^{2s}\ d\tau\right] \\[5pt]
	= &\ \left[\int_0^t\vert Q^{\frac{1}{2}}e^{\tau B^T}\sigma\vert^2d\tau\right]^{\frac{q}{2}}\rho^q\exp\left[-\frac{1}{2}\left(\int_0^t\vert Q^{\frac{1}{2}}e^{\tau B^T}\sigma\vert^{2s}\ d\tau\right)\rho^{2s}\right] \nonumber \\[5pt]
	\le &\ \left[\int_0^t\vert Q^{\frac{1}{2}}e^{\tau B^T}\sigma\vert^2d\tau\right]^{\frac{q}{2}}\left[\frac{1}{2}\int_0^t\vert Q^{\frac{1}{2}}e^{\tau B^T}\sigma\vert^{2s}d\tau\right]^{-\frac{q}{2s}}\left[\frac{q}{2es}\right]^{\frac{q}{2s}}
	\le (M^s_t)^q\left[\frac{q}{es}\right]^{\frac{q}{2s}}, \nonumber
\end{align}
where
$$M^s_t = \sup_{\eta\in\mathbb{S}^{n-1}}\left[\int_0^t\vert Q^{\frac{1}{2}}e^{\tau B^T}\eta\vert^2d\tau\right]^{\frac{1}{2}}\left[\int_0^t\vert Q^{\frac{1}{2}}e^{\tau B^T}\eta\vert^{2s}d\tau\right]^{-\frac{1}{2s}}.$$
Therefore, we deduce from \eqref{05022018E9}, \eqref{10052018E8}, a change of variables and the Plancherel theorem that for all $k\in\{0,\ldots,r\}$, $q>0$, $0<t<1$ and $u\in \mathscr{S}(\mathbb{R}^n)$,
\begin{equation}\label{03092018E1}
	\Vert\vert\Pi_kD_x\vert^qe^{-t\mathcal{P}}u\Vert_{L^2(\mathbb{R}^n)}\le \left[\frac{cM^s_t}{t^{\frac{1}{2}+k}}\right]^q\
e^{\frac{1}{2}\Tr(B)t}\ \left[\frac{q}{es}\right]^{\frac{q}{2s}}\ \Vert u\Vert_{L^2(\mathbb{R}^n)}.
\end{equation}
By using that for all $k\in\{0,\ldots,r\}$, $\mathbb P_k = \Pi_0+\ldots+\Pi_k$, we obtain from Lemma \ref{05022018L2} and the Plancherel theorem that for all $k\in\{0,\ldots,r\}$, $q>0$, $0<t<1$ and $u\in \mathscr S(\mathbb{R}^n)$,
\begin{multline}\label{13022018E9}
	\Vert\vert \mathbb P_kD_x\vert^qe^{-t\mathcal{P}}u\Vert_{L^2(\mathbb{R}^n)} 
	= \Vert\vert (\Pi_0+\ldots+\Pi_k) D_x\vert^qe^{-t\mathcal{P}}u\Vert_{L^2(\mathbb{R}^n)} \\
	\le (r+1)^{\left(q-1\right)_+}\sum_{j=0}^k\Vert\vert\Pi_jD_x\vert^qe^{-t\mathcal{P}}u\Vert_{L^2(\mathbb{R}^n)}.
\end{multline}
We then deduce from \eqref{03092018E1} and \eqref{13022018E9} that for all $k\in\{0,\ldots,r\}$, $q>0$, $0<t<1$ and $u\in \mathscr{S}(\mathbb{R}^n)$,
\begin{align*}
	\Vert\vert \mathbb P_kD_x\vert^qe^{-t\mathcal{P}}u\Vert_{L^2(\mathbb{R}^n)}
	& \le (r+1)^{\left(q-1\right)_+}\sum_{j=0}^k\left[\frac{cM^s_t}{t^{\frac{1}{2}+j}}\right]^q e^{\frac{1}{2}\Tr(B)t}\ q^{\frac{q}{2s}}\ \Vert u\Vert_{L^2(\mathbb{R}^n)} \\
	& \le (r+1)^{\left(q-1\right)_+}\sum_{j=0}^k\left[\frac{cM^s_t}{t^{\frac{1}{2}+k}}\right]^q e^{\frac{1}{2}\Tr(B)t}\ q^{\frac{q}{2s}}\ \Vert u\Vert_{L^2(\mathbb{R}^n)} \\
	& = (r+1)^{1+\left(q-1\right)_+}\left[\frac{cM^s_t}{t^{\frac{1}{2}+k}}\right]^q e^{\frac{1}{2}\Tr(B)t}\ q^{\frac{q}{2s}}\ \Vert u\Vert_{L^2(\mathbb{R}^n)}.
\end{align*}
The previous inequality can be extended to all $u\in L^2(\mathbb R^n)$ since the Schwartz space $\mathscr{S}(\mathbb R^n)$ is dense in $L^2(\mathbb R^n)$. This ends the proof of Proposition \ref{15022018P2}.
\end{proof}

\subsection{Study of the term $M^s_t$} In order to obtain sharp asymptotics of the seminorms 
$\Vert\vert \mathbb P_kD_x\vert^{q} e^{-t\mathcal{P}}u\Vert_{L^2(\mathbb{R}^n)}$ as $t\rightarrow0^+$, we need to study the term
\begin{equation}\label{06052018E1}
	M^s_t = \sup_{\xi\in\mathbb{S}^{n-1}}\left[\int_0^t\vert Q^{\frac{1}{2}}e^{\tau B^T}\xi\vert^2d\tau\right]^{\frac{1}{2}}\left[\int_0^t\vert Q^{\frac{1}{2}}e^{\tau B^T}\xi\vert^{2s}d\tau\right]^{-\frac{1}{2s}}.
\end{equation}

\begin{lem}\label{19032018L1} For all $s>0$ and $t>0$, $M^s_t$ is well-defined and $0<M^s_t<+\infty$.
\end{lem}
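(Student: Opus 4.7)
The plan is to split the proof into two steps corresponding to the two conclusions: (i) for every $\xi \in \mathbb{S}^{n-1}$ the denominator in the definition of $M^s_t$ is strictly positive, so that the quotient is well-defined; (ii) the quotient, viewed as a function of $\xi$, is continuous on the compact sphere $\mathbb{S}^{n-1}$, hence the supremum is both attained and finite (and clearly positive).

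For (i), I would fix $\xi \in \mathbb{R}^n \setminus \{0\}$ and study the map $\varphi : \tau \mapsto Q^{\frac{1}{2}} e^{\tau B^T} \xi$. This map is real-analytic on $\mathbb{R}$ with Taylor expansion at $\tau = 0$ given by
$$\varphi(\tau) = \sum_{k \geq 0} \frac{\tau^k}{k!}\, Q^{\frac{1}{2}} (B^T)^k \xi.$$
Since the Kalman rank condition is in force throughout this section, the equivalent formulation \eqref{05052018E4} tells us that there exists $k \in \{0, \ldots, r\}$ with $Q^{\frac{1}{2}} (B^T)^k \xi \neq 0$; in particular $\varphi \not\equiv 0$. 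Real-analyticity then forces the zero set of $\tau \mapsto |\varphi(\tau)|^{2s}$ to be discrete in $[0,t]$, so $\int_0^t |\varphi(\tau)|^{2s}\, d\tau > 0$ and the ratio in \eqref{06052018E1} makes sense for every $\xi \in \mathbb{S}^{n-1}$.

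For (ii), I would show that both integrals
$$\xi \mapsto \int_0^t |Q^{\frac{1}{2}} e^{\tau B^T} \xi|^{2}\, d\tau \quad \text{and} \quad \xi \mapsto \int_0^t |Q^{\frac{1}{2}} e^{\tau B^T} \xi|^{2s}\, d\tau$$
define continuous functions on $\mathbb{R}^n$: this follows from the dominated convergence theorem, since on any bounded set of $\xi$ the integrands are uniformly bounded on $[0,t]$ and depend continuously on $\xi$ for every fixed $\tau$. By step (i), the second integral is continuous and strictly positive on the compact set $\mathbb{S}^{n-1}$, so its $-\frac{1}{2s}$-th power is continuous there. The function of $\xi$ under the supremum in \eqref{06052018E1} is therefore continuous and strictly positive on $\mathbb{S}^{n-1}$, and by compactness its supremum is attained and lies in $(0, +\infty)$.

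The only genuinely non-trivial point is the strict positivity of the denominator, which requires combining the Kalman rank condition with real-analyticity of $\tau \mapsto Q^{\frac{1}{2}} e^{\tau B^T} \xi$; once that lower bound is in place, the finiteness and positivity of $M^s_t$ reduce to continuity on a compact set.
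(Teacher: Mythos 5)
Your proposal is correct and follows essentially the same route as the paper: the key step in both is that the Kalman rank condition in the form \eqref{05052018E4} prevents all the coefficients $Q^{\frac{1}{2}}(B^T)^k\xi$, $0\le k\le r$, from vanishing, which forces the positivity of the integrals, and the conclusion then follows from continuity in $\xi$ and compactness of $\mathbb{S}^{n-1}$. The only cosmetic difference is that the paper argues by contradiction using mere continuity (if the integral vanished, $Q^{\frac{1}{2}}e^{\tau B^T}\xi\equiv0$ on $[0,t]$, and differentiating at $\tau=0$ contradicts \eqref{05052018E4}), whereas you invoke real-analyticity and the discreteness of the zero set; both are valid.
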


\begin{proof} Let $\alpha\in\{2,2s\}$. We first check that
\begin{equation}\label{31072018E2}
	\forall t>0, \forall\xi\in\mathbb{S}^{n-1},\quad \int_0^t\vert Q^{\frac{1}{2}}e^{\tau B^T}\xi\vert^{\alpha}d\tau>0.
\end{equation}
Proceeding by contradiction, we assume that
$$\int_0^t\vert Q^{\frac{1}{2}}e^{\tau B^T}\xi\vert^{\alpha}d\tau = 0,$$
where $t>0$ and $\xi\in\mathbb{S}^{n-1}$. Since $\tau\mapsto\vert Q^{\frac{1}{2}}e^{\tau B^T}\xi\vert^{\alpha}$ is continuous on $[0,t]$, it follows that
\begin{gather}\label{28022018E1}
	\forall\tau\in[0,t],\quad Q^{\frac{1}{2}}e^{\tau B^T}\xi = 0.
\end{gather}
By differentiating the identity \eqref{28022018E1} with respect to the $\tau$-variable and evaluating at $\tau=0$, we deduce that
$$\forall k\in\{0,\ldots,r\},\quad Q^{\frac{1}{2}}(B^T)^k\xi = 0.$$
We obtain from \eqref{05052018E4} that $\xi = 0$. This proves \eqref{31072018E2} contradicting that $\xi\in\mathbb{S}^{n-1}$. By using the compactness of $\mathbb{S}^{n-1}$ and the continuity property with respect to the $\xi$-variable, it follows that the term $M^s_t$ is actually well-defined and satisfies $0<M^s_t<+\infty$. 
\end{proof}

We aim at studying the asymptotics of the term $M^s_t$ as $t$ tends to $0^+$. First, we set for all $\xi\in\mathbb{S}^{n-1}$,
$$M^s_{t,\xi} = \left[\int_0^t\vert Q^{\frac{1}{2}}e^{\tau B^T}\xi\vert^2d\tau\right]^{\frac{1}{2}}\left[\int_0^t\vert Q^{\frac{1}{2}}e^{\tau B^T}\xi\vert^{2s}d\tau\right]^{-\frac{1}{2s}}.$$
As an insight, we begin by studying the term $M^s_{t,\xi}$ for small times $t>0$. To that end, we consider
$$k_{\xi} = \min\left\{0\le k\le r,\quad Q^{\frac{1}{2}}(B^T)^k\xi\ne 0\right\},$$
which is well-defined by definition of $r$ in \eqref{05052018E4}. On the one hand, we observe that
$$\int_0^t\vert Q^{\frac{1}{2}}e^{\tau B^T}\xi\vert^2d\tau\underset{t\rightarrow0}{\sim}\vert Q^{\frac{1}{2}}(B^T)^{k_{\xi}}\xi\vert^2\int_0^t\frac{\tau^{2k_{\xi}}}{(k_{\xi}!)^2}d\tau
= \vert Q^{\frac{1}{2}}(B^T)^{k_{\xi}}\xi\vert^2\frac{t^{1+2k_{\xi}}}{(1+2k_{\xi})(k_{\xi}!)^2}.$$
Similarly, we have
$$\int_0^t\vert Q^{\frac{1}{2}}e^{\tau B^T}\xi\vert^{2s}d\tau\underset{t\rightarrow0}{\sim} \vert Q^{\frac{1}{2}}(B^T)^{k_{\xi}}\xi\vert^{2s}\frac{t^{1+2k_{\xi}s}}{(1+2k_{\xi}s)(k_{\xi}!)^{2s}},$$
and as a consequence, it follows that
$$M^s_{t,\xi}\underset{t\rightarrow0}{\sim}\frac{(1+2k_{\xi}s)^{\frac{1}{2s}}}{(1+2k_{\xi})^{\frac{1}{2}}}\ t^{\frac{1}{2}-\frac{1}{2s}}.$$
Unfortunately, some numerics suggest that the previous convergence does not stand uniformly on $\xi\in\mathbb{S}^{n-1}$, and therefore, the study of the term $M^s_t$ when $t\rightarrow0^+$ requires a more careful analysis. The Jensen inequality provides a first global estimate :

\begin{lem}\label{21052018L1} For all $t>0$, we have
$$M^s_t\le t^{\frac{1}{2}-\frac{1}{2s}}\quad \text{when $s\geq1$,}\quad\text{and}\quad M^s_t\geq t^{\frac{1}{2}-\frac{1}{2s}}\quad \text{when $0<s\le1$}.$$
\end{lem}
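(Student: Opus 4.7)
The plan is to reduce the estimate to a single application of Jensen's inequality applied pointwise in $\xi\in\mathbb S^{n-1}$, using the fact that the power function $x\mapsto x^s$ is convex for $s\geq 1$ and concave for $0<s\leq 1$.

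First, fix $\xi\in\mathbb S^{n-1}$ and set $f(\tau)=\vert Q^{1/2}e^{\tau B^T}\xi\vert^{2}\geq 0$ for $\tau\in[0,t]$, so that the quantity to control is
\[
M^s_{t,\xi}=\Bigl(\int_0^t f(\tau)\,d\tau\Bigr)^{1/2}\Bigl(\int_0^t f(\tau)^s\,d\tau\Bigr)^{-1/(2s)}.
\]
By Lemma \ref{19032018L1} both integrals are strictly positive, so the expression is well-defined and we may freely raise it to positive real powers.

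Next, apply Jensen's inequality on $[0,t]$ with respect to the probability measure $t^{-1}d\tau$. For $s\geq 1$ the convexity of $x\mapsto x^s$ gives
\[
\Bigl(\tfrac{1}{t}\int_0^t f\,d\tau\Bigr)^{s}\leq \tfrac{1}{t}\int_0^t f^s\,d\tau,
\]
which rearranges to $\int_0^t f\,d\tau\leq t^{1-1/s}\bigl(\int_0^t f^s\,d\tau\bigr)^{1/s}$. Taking the square root yields $M^s_{t,\xi}\leq t^{1/2-1/(2s)}$, and this bound is uniform in $\xi$, so passing to the supremum gives the claim $M^s_t\leq t^{1/2-1/(2s)}$. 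For $0<s\leq 1$ the concavity of $x\mapsto x^s$ reverses the Jensen inequality, and the same algebraic manipulation (with identical book-keeping of exponents, all positive since we only raise to the powers $1/s$ and $1/2$) produces $M^s_{t,\xi}\geq t^{1/2-1/(2s)}$; taking the supremum gives the second estimate.

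There is essentially no obstacle: the proof is a direct consequence of Jensen's inequality, and the only point requiring care is checking that all quantities raised to fractional powers are nonnegative (which is automatic from $f\geq 0$ and the lower bound of Lemma \ref{19032018L1}), and that the direction of the inequality is preserved when taking the $1/s$-th and $1/2$-th powers (which it is, since both exponents are positive).
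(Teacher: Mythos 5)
Your proof is correct and is essentially the paper's argument: the paper also applies Jensen's inequality with respect to the normalized measure $t^{-1}d\tau$ (phrased directly as the comparison of the $L^2$ and $L^{2s}$ averages of $\vert Q^{1/2}e^{\tau B^T}\xi\vert$), obtaining the same pointwise bound on $M^s_{t,\xi}$ uniformly in $\xi$ and concluding by taking the supremum, with the inequality reversed for $0<s\le 1$ exactly as you do.
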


\begin{proof} When $s\geq1$, we deduce from Jensen's inequality that for all $t>0$ and $\xi\in\mathbb{S}^{n-1}$,
$$\left[\frac{1}{t}\int_0^t\vert Q^{\frac{1}{2}}e^{\tau B^T}\xi\vert^2d\tau\right]^{\frac{1}{2}}\le\left[\frac{1}{t}\int_0^t\vert Q^{\frac{1}{2}}e^{\tau B^T}\xi\vert^{2s}d\tau\right]^{\frac{1}{2s}},$$
and therefore $M^s_t\le t^{\frac{1}{2}-\frac{1}{2s}}$. Similarly, $M^s_t\geq t^{\frac{1}{2}-\frac{1}{2s}}$ when $0<s\le 1$. 
\end{proof}

To deal with the case when $0<s<1$, we shall use the following instrumental lemma :

\begin{lem} \label{lemma_algebra} Let $E$ be a real finite-dimensional vector space and $L_1,L_2:E\rightarrow\mathbb{R}_+$ be two continuous functions satisfying for all $j\in\{1,2\}$,
$$\forall\lambda\geq 0, \forall P\in E,\quad L_j(\lambda P) = \lambda L_j(P),$$
and
$$\forall P\in E\setminus\{0\},\quad L_j(P) >0.$$
Then, there exists a positive constant $c>0$ such that
$$\forall P\in E,\quad L_1(P) \le cL_2(P).$$
\end{lem}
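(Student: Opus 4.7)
The plan is to exploit compactness: since $E$ is finite-dimensional, choose any norm $\|\cdot\|$ on $E$, and consider the unit sphere $S = \{P \in E : \|P\| = 1\}$, which is compact. The continuity of $L_1$ and $L_2$ on $S$ will reduce the global inequality to a comparison on $S$, which is then lifted to all of $E$ by the positive $1$-homogeneity.

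First I would observe that $L_j(0) = 0$ for $j \in \{1,2\}$; this follows directly by taking $\lambda = 0$ in the homogeneity assumption (so the claimed inequality is trivial when $P = 0$). Next I would set
$$m = \min_{P \in S} L_2(P), \qquad M = \max_{P \in S} L_1(P).$$
Both extrema exist by compactness of $S$ and continuity of $L_1, L_2$. Since $L_2$ is strictly positive on $E \setminus \{0\}$ and $0 \notin S$, we have $m > 0$, while $M \geq 0$ is finite.

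Given $P \in E \setminus \{0\}$, write $P = \|P\| \hat{P}$ with $\hat{P} = P/\|P\| \in S$. Using the homogeneity of $L_1$ and $L_2$, we obtain
$$L_1(P) = \|P\| L_1(\hat{P}) \leq \|P\| M = \|P\| \cdot \frac{M}{m} \cdot m \leq \frac{M}{m} \|P\| L_2(\hat{P}) = \frac{M}{m} L_2(P).$$
Setting $c = M/m$ (or $c = 1$ if $M = 0$, in which case $L_1 \equiv 0$) gives the required constant, which together with the case $P = 0$ finishes the proof.

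There is essentially no obstacle here: the only subtlety is ensuring that $L_2$ is bounded below by a positive constant on $S$, which is immediate from the combination of compactness, continuity, and the strict positivity hypothesis $L_2(P) > 0$ for $P \neq 0$. No completeness or linearity of $L_1, L_2$ is used, only their positive $1$-homogeneity; in particular this is essentially the standard equivalence-of-norms argument adapted to (possibly non-symmetric, non-subadditive) positive homogeneous continuous functionals on a finite-dimensional space.
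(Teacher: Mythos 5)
Your proof is correct and follows essentially the same argument as the paper: compactness of the unit sphere together with continuity and positive $1$-homogeneity, the only cosmetic difference being that you take $c = \max_{\mathbb{S}} L_1 / \min_{\mathbb{S}} L_2$ directly while the paper factors the comparison through the norm $\Vert\cdot\Vert$ with two constants $c_1, c_2$.
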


\begin{proof} Let $\Vert\cdot\Vert$ be a norm on $E$ and $\mathbb{S}$ the associated unit sphere. Since $E$ is finite-dimensional, $\mathbb{S}$ is compact. Moreover, $L_1$ and $L_2$ are continuous and positive on $\mathbb{S}$, and as a consequence, by homogeneity,
$$\exists c_1,c_2>0, \forall P\in E,\quad L_1(P)\le c_1\Vert P\Vert\ \text{and}\ \Vert P\Vert  \le c_2L_2(P).$$
We deduce that
$$\forall P\in E,\quad L_1(P)\le c_1c_2 L_2(P).$$
\end{proof}

The next lemma is a direct application of Lemma \ref{lemma_algebra}. It only deals with the case when the matrix $B$ is nilpotent but its proof contains the main ideas that will be used to tackle the general case :

\begin{lem}\label{21052018L2} When $B$ is nilpotent, we have that for all $s>0$, there exists a positive constant $c>0$ such that for all $t>0$,
$$M^s_t\le ct^{\frac{1}{2}-\frac{1}{2s}}.$$
\end{lem}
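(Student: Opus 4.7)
The plan is to reduce the estimate on $M^s_t$ to a comparison of two positively homogeneous functionals on a suitable finite-dimensional real vector space, so that Lemma \ref{lemma_algebra} can be applied. The first step is to normalize the interval of integration via the change of variable $\tau = tu$. This transforms both integrals into $t\int_0^1|Q^{1/2}e^{tuB^T}\xi|^{\alpha}\,du$ for $\alpha\in\{2,2s\}$. Pulling out the common factor of $t$ from the two terms that define $M^s_{t,\xi}$ yields
$$M^s_{t,\xi} = t^{\frac12-\frac1{2s}}\,\frac{\bigl(\int_0^1|P_{\xi,t}(u)|^2\,du\bigr)^{\frac12}}{\bigl(\int_0^1|P_{\xi,t}(u)|^{2s}\,du\bigr)^{\frac1{2s}}},$$
where $P_{\xi,t}(u) := Q^{\frac12}e^{tuB^T}\xi$. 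The whole problem is therefore to bound the second factor uniformly in $\xi\in\mathbb{S}^{n-1}$ and $t>0$.

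Here the nilpotency of $B$ is decisive: it forces $e^{tuB^T} = \sum_{k=0}^{n-1}\frac{(tu)^k}{k!}(B^T)^k$, so $P_{\xi,t}$ is a polynomial in $u$ of degree at most $n-1$ with coefficients $\frac{t^k}{k!}Q^{\frac12}(B^T)^k\xi \in\mathbb R^n$. I would therefore take $E$ to be the (finite-dimensional) real vector space of polynomial maps $[0,1]\to\mathbb R^n$ of degree at most $n-1$, and define on it
$$L_1(P) = \biggl(\int_0^1|P(u)|^2\,du\biggr)^{\frac12},\qquad L_2(P) = \biggl(\int_0^1|P(u)|^{2s}\,du\biggr)^{\frac1{2s}}.$$
Both functionals are continuous on $E$, positively homogeneous of degree $1$, and strictly positive on $E\setminus\{0\}$, since a polynomial whose integral of $|\cdot|^{\alpha}$ vanishes on $[0,1]$ must be identically zero. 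Lemma \ref{lemma_algebra} then provides a constant $c>0$, depending only on $n$, $B$, $Q$ and $s$, such that $L_1(P)\le c\,L_2(P)$ for every $P\in E$.

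Applying this inequality to $P = P_{\xi,t}\in E$ gives $M^s_{t,\xi}\le c\,t^{\frac12-\frac1{2s}}$ uniformly in $\xi\in\mathbb{S}^{n-1}$ and $t>0$; taking the supremum in $\xi$ concludes. The only delicate point is the positivity of $L_2$ on $E\setminus\{0\}$, which is entirely elementary here (polynomial $\Rightarrow$ identity principle); the underlying reason that $P_{\xi,t}\neq 0$ for $\xi\ne 0$ — namely the Kalman rank condition — is not even needed at this stage, as the Lemma gives us the inequality for \emph{every} element of $E$. The real point where nilpotency is used is to guarantee that the family $\{P_{\xi,t}:\xi\in\mathbb S^{n-1},\,t>0\}$ sits inside a fixed finite-dimensional space; without this, the method breaks down and the general case of Lemma \ref{21052018L1} has to be treated by a more delicate reduction.
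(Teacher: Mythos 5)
Your proposal is correct and follows essentially the same route as the paper: rescaling $\tau = t\alpha$ to the unit interval, observing that nilpotency of $B$ places the maps $\alpha\mapsto Q^{\frac12}e^{t\alpha B^T}\xi$ in a fixed finite-dimensional space of $\mathbb{R}^n$-valued polynomials, and applying Lemma \ref{lemma_algebra} to the two functionals $L_1,L_2$ to bound the ratio uniformly in $\xi$ and $t$. Your side remark that the Kalman rank condition is not needed for the comparison itself (only for the finiteness of $M^s_t$) is also consistent with the paper's argument.
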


\begin{proof} For all $t>0$ and $\xi\in\mathbb{S}^{n-1}$, we consider the term
$$M^s_{t,\xi} = \left[\int_0^t\vert Q^{\frac{1}{2}}e^{\tau B^T}\xi\vert^2d\tau\right]^{\frac{1}{2}}\left[\int_0^t\vert Q^{\frac{1}{2}}e^{\tau B^T}\xi\vert^{2s}d\tau\right]^{-\frac{1}{2s}},$$
and the function
$$f_{t,\xi}(\alpha) = Q^{\frac{1}{2}} e^{ t \alpha B^T} \xi,\quad \alpha\in[0,1].$$
Let $k$ be the index of $B$. Since $B^T$ is also nilpotent with index $k$, we have that
$$\forall t>0,\forall\xi\in\mathbb{S}^{n-1},\quad f_{t,\xi} \in (\mathbb{R}_k[X])^n.$$
It follows from Lemma \ref{lemma_algebra} applied with $E = (\mathbb{R}_k[X])^n$ and the functions
$$L_1(f) = \left[\int_0^1\vert f(\alpha)\vert^2d\alpha\right]^{\frac{1}{2}}\quad\text{and}\quad L_2(f) = \left[\int_0^1\vert f(\alpha)\vert^{2s}d\alpha\right]^{\frac{1}{2s}},$$ 
that there exists a positive constant $c>0$ such that for all $t>0$ and $\xi\in\mathbb{R}^{n-1}$,
$$t^{\frac{1}{2s}-\frac{1}{2}}M^s_{t,\xi} 
= \left[\int_0^1\vert f_{t,\xi}(\alpha)\vert^2d\alpha\right]^{\frac{1}{2}}\left[\int_0^1\vert f_{t,\xi}(\alpha)\vert^{2s}d\alpha\right]^{-\frac{1}{2s}}\le c.$$
This ends the proof of Lemma \ref{21052018L2}.
\end{proof}

The next lemma is an adaptation of the previous one that allows to drop the assumption on the nilpotency of the matrix $B$ but only in the asymptotics when $t$ tends to $0^+$.

\begin{prop}\label{jenvoiedupateenzero} For all $s>0$, there exist $c>0$ and $0<t_0<1$ such that for all $0<t< t_0$,
$$M^s_t\le ct^{\frac{1}{2}-\frac{1}{2s}}.$$
\end{prop}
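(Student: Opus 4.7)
The plan is to extend Lemma \ref{21052018L2} from the nilpotent case to a general matrix $B$, by combining a Taylor expansion of order $r$ with a perturbation argument; the Kalman rank condition will enter through a lower bound on the leading polynomial term obtained via Lemma \ref{lemma_algebra}.

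First, the substitution $\tau = t\alpha$ allows me to rewrite, for each $\xi\in\mathbb{S}^{n-1}$,
$$M^s_{t,\xi} = t^{\frac{1}{2}-\frac{1}{2s}}\, \frac{L_1(f_{t,\xi})}{L_2(f_{t,\xi})}, \qquad f_{t,\xi}(\alpha) = Q^{\frac{1}{2}} e^{t\alpha B^T} \xi,$$
where $L_1(g) = (\int_0^1 |g|^2\, d\alpha)^{1/2}$ and $L_2(g) = (\int_0^1 |g|^{2s}\, d\alpha)^{1/(2s)}$, reducing the claim to a uniform bound on the ratio $L_1(f_{t,\xi})/L_2(f_{t,\xi})$ for $\xi\in\mathbb{S}^{n-1}$ and $0<t<t_0$. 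I would then decompose $f_{t,\xi} = p_{t,\xi} + R_{t,\xi}$, where
$$p_{t,\xi}(\alpha) = \sum_{k=0}^{r}\frac{(t\alpha)^k}{k!}\, Q^{\frac{1}{2}}(B^T)^k \xi \in E := (\mathbb{R}_r[X])^n$$
is the Taylor polynomial of order $r$, and the remainder satisfies $|R_{t,\xi}(\alpha)| \le Ct^{r+1}$ uniformly in $\alpha\in[0,1]$, $\xi\in\mathbb{S}^{n-1}$, $t\in(0,1]$.

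Next, applying Lemma \ref{lemma_algebra} on the finite-dimensional space $E$ (both $L_1, L_2$ are continuous, positively $1$-homogeneous, and positive on nonzero polynomials thanks to the linear independence of $1,\alpha,\ldots,\alpha^r$) yields a constant $c_0>0$ with $L_1(p) \le c_0 L_2(p)$ for all $p\in E$. For the crucial lower bound on $L_2(p_{t,\xi})$, I would apply Lemma \ref{lemma_algebra} a second time, on $(\mathbb{R}^n)^{r+1}$, to compare $(v_0,\ldots,v_r) \mapsto L_2(\sum_k\alpha^k v_k)$ with $(v_0,\ldots,v_r) \mapsto \sum_k |v_k|$, obtaining
$$L_2(p_{t,\xi}) \geq c' \sum_{k=0}^r \frac{t^k}{k!}\, \vert Q^{\frac{1}{2}}(B^T)^k\xi\vert \geq c'\, t^r \sum_{k=0}^r \frac{1}{k!}\, \vert Q^{\frac{1}{2}}(B^T)^k\xi\vert \geq c_1\, t^r$$
for $t\in(0,1]$ and $\xi\in\mathbb{S}^{n-1}$. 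The last inequality uses that, by the Kalman rank condition \eqref{05052018E4}, the continuous map $\xi\mapsto\sum_{k=0}^r\frac{1}{k!}|Q^{\frac{1}{2}}(B^T)^k\xi|$ does not vanish on $\mathbb{S}^{n-1}$ and thus admits a positive minimum by compactness.

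To conclude, I would pass from $p_{t,\xi}$ to $f_{t,\xi}$ using the remainder estimates $L_1(R_{t,\xi}) \le Ct^{r+1}$ and $\int_0^1|R_{t,\xi}|^{2s}\, d\alpha \le C^{2s}t^{2s(r+1)}$. When $s\ge 1/2$, the reverse triangle inequality in $L^{2s}$ directly yields $L_2(f_{t,\xi}) \geq \frac{c_1}{2}\, t^r$ for $t$ small enough and $L_2(p_{t,\xi}) \le L_2(f_{t,\xi}) + Ct^{r+1}$; combining with $L_1(f_{t,\xi}) \le L_1(p_{t,\xi}) + Ct^{r+1} \le c_0 L_2(p_{t,\xi}) + Ct^{r+1}$ and dividing by $L_2(f_{t,\xi})$ gives the uniform bound $L_1(f_{t,\xi})/L_2(f_{t,\xi}) \le c_0 + O(t)$. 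The main obstacle will be the case $s<1/2$, where $L_2$ is not a norm: there I would replace the reverse triangle inequality by the pointwise subadditivity $(a+b)^{2s} \le a^{2s} + b^{2s}$, yielding $L_2(f_{t,\xi})^{2s} \ge L_2(p_{t,\xi})^{2s} - C^{2s}t^{2s(r+1)} \ge \frac{c_1^{2s}}{2}\, t^{2sr}$ for $t$ small, together with $L_2(p_{t,\xi})^{2s} \le L_2(f_{t,\xi})^{2s} + C^{2s}t^{2s(r+1)}$, from which the same kind of uniform bound on the ratio follows. Taking the supremum over $\xi\in\mathbb{S}^{n-1}$ then gives $M^s_t \le K\, t^{1/2-1/(2s)}$.
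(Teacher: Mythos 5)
Your proposal is correct and follows essentially the same route as the paper's proof: rescaling to $[0,1]$, splitting $Q^{\frac12}e^{t\alpha B^T}\xi$ into its degree-$r$ Taylor polynomial plus an $O(t^{r+1})$ remainder, applying Lemma \ref{lemma_algebra} on $(\mathbb{R}_r[X])^n$ together with the Kalman condition and compactness of $\mathbb{S}^{n-1}$ to get the lower bound $\gtrsim t^r$ on the polynomial part, and then a perturbation argument using subadditivity when $2s<1$. The only cosmetic differences are that you use the $\ell^1$ norm of the coefficients where the paper uses the equivalent Hardy ($\ell^\infty$) norm, and you split the cases $2s\geq1$ and $2s<1$ by hand where the paper packages this in Lemma \ref{2052018L1}.
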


\begin{proof} For all $t>0$ and $\xi\in\mathbb{S}^{n-1}$, we consider anew
$$M^s_{t,\xi} = \left[\int_0^t\vert Q^{\frac{1}{2}}e^{\tau B^T}\xi\vert^2d\tau\right]^{\frac{1}{2}}\left[\int_0^t\vert Q^{\frac{1}{2}}e^{\tau B^T}\xi\vert^{2s}d\tau\right]^{-\frac{1}{2s}}.$$
Let $P_{t,\xi}$ and $R_{t,\xi}$ be defined for all $\alpha\in[0,1]$ by
$$P_{t,\xi}(\alpha)= \sum_{k=0}^r \alpha^k \frac{t^k}{k!}Q^{\frac{1}{2}} (B^T)^k \xi\quad\text{and}\quad R_{t,\xi}(\alpha) = Q^{\frac{1}{2}} e^{t\alpha B^T} \xi - P_{t,\xi}(\alpha).$$
We use Lemma \ref{lemma_algebra} with $E = (\mathbb{R}_{r}[X])^n$ and the functions
$$L_1(P) = \left[\int_0^1\vert P(\alpha)\vert^2d\alpha\right]^{\frac{1}{2}}\quad\text{and}\quad L_2(P) = \left[\int_0^1\vert P(\alpha)\vert^{2s}d\alpha\right]^{\frac{1}{2s}},$$
to obtain that there exists a constant $c>0$ such that for all $t>0$ and $\xi\in\mathbb{S}^{n-1}$,
\begin{align}\label{estimationlapluslongdelunivers}
	t^{\frac{1}{2s}-\frac{1}{2}}M^s_{t,\xi} & = \left[ \int_0^1\vert Q^{\frac{1}{2}} e^{t\alpha B^T}\xi\vert^2 d\alpha \right]^{\frac{1}{2}}\left[\int_0^1\vert Q^{\frac{1}{2}} e^{t\alpha B^T}\xi\vert^{2s}d\alpha\right]^{-\frac{1}{2s}} \\[5pt]
	& \le c \left[\frac{\displaystyle\int_0^1\vert Q^{\frac{1}{2}}e^{t\alpha B^T}\xi\vert^2 d\alpha}{\displaystyle\int_0^1\vert P_{t,\xi}(\alpha)\vert^{2}d\alpha}\right]^{\frac{1}{2}} \left[\frac{\displaystyle\int_0^1\vert P_{t,\xi}(\alpha)\vert^{2s}d\alpha}{\displaystyle\int_0^1\vert Q^{\frac{1}{2}}e^{t\alpha B^T}\xi\vert^{2s}d\alpha}\right]^{\frac{1}{2s}}. \nonumber
\end{align}
We aim at establishing uniform upper bounds with respect to $\xi$ and $t$ for these two factors. To that end, we equip $(\mathbb{R}_{r}[X])^n$ of the Hardy's norm $\| \cdot \|_{\mathcal{H}^\infty}$ defined by
$$\forall P \in(\mathbb{R}_{r}[X])^n,\quad \Vert P\Vert_{\mathcal{H}^\infty} = \max_{k\in\{0,\ldots,r\}} \frac{\vert P^{(k)}(0)\vert}{k!}.$$
We deduce anew from Lemma \ref{lemma_algebra} applied with $E=(\mathbb{R}_{r}[X])^n$ and the functions $\Vert\cdot\Vert_{\mathcal{H}^{\infty}}$ and
$$L(P) = \left[\int_0^1 |P(\alpha)|^p d\alpha \right]^{\frac{1}{p}},\quad p\in\{2,2s\},$$
that
\begin{gather}\label{ineq_equiv}
	\forall p\in\{2,2s\}, \exists c_p>0, \forall P \in (\mathbb{R}_{r}[X])^n,\quad \Vert P\Vert_{\mathcal{H}^{\infty}} \le c_p \left[\int_0^1\vert P(\alpha)\vert^p d\alpha \right]^{\frac{1}{p}}.  
\end{gather}
According to \eqref{05052018E4}, we notice that
$$\forall\xi\in\mathbb{S}^{n-1},\exists k_{\xi}\in\{0,\ldots,r\},\quad \max_{k\in\{0,\ldots,r\}}\frac{\vert Q^{\frac{1}{2}}(B^T)^k\xi\vert}{k!}
\geq\frac{\vert Q^{\frac{1}{2}}(B^T)^{k_{\xi}}\xi\vert}{(k_{\xi})!}>0,$$
and since the function
$$\xi\in\mathbb{S}^{n-1}\mapsto\max_{k\in\{0,\ldots,r\}}\frac{\vert Q^{\frac{1}{2}}(B^T)^k\xi\vert}{k!} \quad
\text{is continuous on $\mathbb{S}^{n-1}$},$$
we deduce by compactness that there exists a positive constant $\varepsilon>0$ such that
$$\forall \xi \in \mathbb{S}^{n-1},\quad \Vert P_{1,\xi}\Vert_{\mathcal{H}^{\infty}}\geq\varepsilon.$$
It follows that
$$\forall t\in(0,1], \forall\xi\in\mathbb{S}^{n-1},\quad \Vert P_{t,\xi} \Vert_{\mathcal{H}^{\infty}} \geq \varepsilon t^r, $$
and we deduce from \eqref{ineq_equiv} that
\begin{gather}\label{ineq_Kalman}
	\forall p\in\{2,2s\}, \forall t\in(0,1], \forall\xi\in\mathbb{S}^{n-1},\quad \varepsilon t^r \le c_p\left[\int_0^1\vert P_{t,\xi}(\alpha)\vert^p d\alpha \right]^{\frac{1}{p}}.
\end{gather}
On the other hand, it follows from the integral version of Taylor's formula that
$$\forall t>0, \forall \xi\in\mathbb{S}^{n-1},\forall\alpha\in[0,1],\quad R_{t,\xi}(\alpha) = \frac{(t\alpha)^{r+1}}{r!}\int_0^1(1-\theta)^rQ^{\frac{1}{2}}(B^T)^{r+1}e^{t\alpha\theta B^T}\xi d\theta.$$
Therefore, there exists $M>0$ such that
\begin{gather}\label{ineq_remainder}
  \forall t\in(0,1], \forall \xi\in\mathbb{S}^{n-1},\quad \Vert R_{t,\xi}\Vert_{L^{\infty}[0,1]} \le M t^{r+1}.
\end{gather}
With these estimates, we can obtain upper bounds on the two factors of the right-hand-side of \eqref{estimationlapluslongdelunivers}. \\[5pt]
\textbf{1.} Applying the triangle inequality for the $L^2$ norm, we have 
$$\forall t>0,\forall\xi\in\mathbb{S}^{n-1},\quad\left[\frac{\displaystyle\int_0^1\vert Q^{\frac{1}{2}}e^{t\alpha B^T}\xi\vert^2 d\alpha }{ \displaystyle \int_0^1 \vert P_{t,\xi}(\alpha)\vert^{2} d\alpha}\right]^{\frac{1}{2}} 
\le 1 + \left[\frac{\displaystyle\int_0^1\vert R_{t,\xi}(\alpha)\vert^2 d\alpha}{\displaystyle\int_0^1\vert P_{t,\xi}(\alpha)\vert^{2} d\alpha}\right]^{\frac{1}{2}}.$$
According to \eqref{ineq_Kalman} and \eqref{ineq_remainder}, we get that for all $\xi\in\mathbb{S}^{n-1}$ and $0<t\le1$,
\begin{gather}\label{22052018E1}
	\left[\frac{\displaystyle\int_0^1\vert Q^{\frac{1}{2}}e^{t\alpha B^T}\xi\vert^2 d\alpha}{\displaystyle \int_0^1\vert P_{t,\xi}(\alpha)\vert^{2} d\alpha}\right]^{\frac{1}{2}} 
	\le 1+\frac{c_2Mt^{r+1}}{\varepsilon t^r} = 1 + \frac{c_2 M}{\varepsilon}t
	\le1 + \frac{c_2 M}{\varepsilon}.
\end{gather}
\textbf{2.} We apply Lemma \ref{2052018L1} with $q=2s$ to derive that
$$\frac{\displaystyle\int_0^1\vert Q^{\frac{1}{2}}e^{t\alpha B^T}\xi\vert^{2s}d\alpha}{\displaystyle\int_0^1\vert P_{t,\xi}(\alpha)\vert^{2s} d\alpha}
\geq2^{-(2s-1)_+} - \frac{\displaystyle\int_0^1\vert R_{t,\xi}(\alpha)\vert^{2s}d\alpha}{\displaystyle\int_0^1\vert P_{t,\xi}(\alpha)\vert^{2s}d\alpha}.$$
Yet, it follows from \eqref{ineq_Kalman} and \eqref{ineq_remainder} that for all $\xi\in\mathbb{S}^{n-1}$ and $0<t\le1$,
$$\frac{\displaystyle\int_0^1\vert R_{t,\xi}(\alpha)\vert^{2s}d\alpha}{\displaystyle\int_0^1\vert P_{t,\xi}(\alpha)\vert^{2s}d\alpha}
\le\left[\frac{c_{2s}Mt^{r+1}}{\varepsilon t^r}\right]^{2s} = \left[\frac{c_{2s}M}{\varepsilon}t\right]^{2s},$$
from which we deduce that
\begin{equation}\label{jeserviraiunjour}
	\frac{\displaystyle\int_0^1\vert Q^{\frac{1}{2}}e^{t\alpha B^T}\xi\vert^{2s}d\alpha}{\displaystyle\int_0^1\vert P_{t,\xi}(\alpha)\vert^{2s}d\alpha}
	\geq 2^{-(2s-1)_+}  - \left[\frac{c_{2s} M}{\varepsilon}t \right]^{2s}.
\end{equation}
It follows from \eqref{jeserviraiunjour} that there exist some positive constants $c_0>0$ and $0<t_0<1$ such that for all $\xi\in\mathbb{R}^n$ and $0<t<t_0$,
\begin{equation}\label{23092018E1}
	\left[\frac{\displaystyle\int_0^1\vert Q^{\frac{1}{2}}e^{t\alpha B^T}\xi\vert^{2s}d\alpha}{\displaystyle\int_0^1\vert P_{t,\xi}(\alpha)\vert^{2s}d\alpha}\right]^{\frac{1}{2s}}
	\geq c_0.
\end{equation}
As a consequence of \eqref{estimationlapluslongdelunivers}, \eqref{22052018E1} and \eqref{23092018E1}, there exists a positive constant $c_1>0$ such that
$$\forall t\in(0,t_0), \forall \xi\in\mathbb{S}^{n-1},\quad M^s_{t,\xi}\le c_1t^{\frac12 - \frac1{2s}}.$$
This ends the proof of Proposition \ref{jenvoiedupateenzero}
\end{proof}

\subsection{Proof of Theorem \ref{30082018T1}} The above asymptotics of the term $M_t^s$ allow to refine the results of Proposition \ref{15022018P2} and to prove Theorem \ref{30082018T1}. First, it follows from Proposition \ref{15022018P2} and Proposition \ref{jenvoiedupateenzero} that there exist some positive constants $C_1>1$ and $0<t_0<1$ such that for all $k\in\{0,\ldots,r\}$, $q>0$, $0<t< t_0$ and $u\in L^2(\mathbb{R}^n)$,
\begin{gather}\label{21052018E9}
	\Vert\vert \mathbb{P}_kD_x\vert^q e^{-t\mathcal{P}}u\Vert_{L^2(\mathbb{R}^n)}
	\le \frac{C_1^{1+q}}{t^{q(\frac{1}{2s}+k)}}\ e^{\frac{1}{2}\Tr(B)t}\ q^{\frac{q}{2s}}\ \Vert u\Vert_{L^2(\mathbb{R}^n)}.
\end{gather}
We can consider $C_2>1$ a positive constant satisfying that for all $k\in\{0,\ldots,r\}$, $q>0$ and $t\in(0,t_0)$,
\begin{gather}\label{21052018E10}
	1\le \frac{C_2^{1+q}}{t^{q(\frac{1}{2s}+k)}}\ q^{\frac{q}{2s}}\quad \text{and}\quad 
	\frac{C^{1+q}_1}{t^{q(\frac{1}{2s}+k)}}\ q^{\frac{q}{2s}}\le \frac{C^{1+q}_2}{t^{q(\frac{1}{2s}+k)}}\ q^{\frac{q}{2s}}.
\end{gather}
Then, it follows from \eqref{21052018E9}, \eqref{21052018E10}, Theorem \ref{06022018T1}, Lemma \ref{05022018L2} and the Plancherel theorem that for all $k\in\{0,\ldots,r\}$, $q>0$, $0<t<t_0$ and $u\in L^2(\mathbb{R}^n)$,
\begin{align*}
	\Vert\langle\mathbb{P}_kD_x\rangle^q e^{-t\mathcal{P}}u\Vert_{L^2(\mathbb{R}^n)}
	& \le 2^{(q-1)_+}\left[\Vert\vert\mathbb P_kD_x\vert^q e^{-t\mathcal{P}}u\Vert_{L^2(\mathbb{R}^n)} + \Vert e^{-t\mathcal{P}}u\Vert_{L^2(\mathbb{R}^n)}\right]\\[5pt]
	& \le 2^{(q-1)_+}\left[\frac{C^{1+q}_1}{t^{q(\frac{1}{2s}+k)}}\ q^{\frac{q}{2s}} + 1\right]e^{\frac{1}{2}\Tr(B)t}\ \Vert u\Vert_{L^2(\mathbb{R}^n)} \\[5pt]
	& \le 2^{1+(q-1)_+}\frac{C^{1+q}_2}{t^{q(\frac{1}{2s}+k)}}\ e^{\frac{1}{2}\Tr(B)t}\ q^{\frac{q}{2s}}\ \Vert u\Vert_{L^2(\mathbb{R}^n)}. 
\end{align*}
This ends the proof of Theorem \ref{30082018T1}.

\subsection{Proof of Corollary \ref{28092018C1}} To end this section, we prove Corollary \ref{28092018C1}. Notice that the second estimate in Corollary \ref{28092018C1} is a straightforward consequence of Theorem \ref{30082018T1}. In order to reformulate the results of Theorem \ref{30082018T1} while using the matrices $Q^{\frac{1}{2}}(B^T)^k$ instead of the orthogonal projections $\mathbb{P}_k$, we begin by proving that there exists a positive constant $C_1>1$ such that for all $k\in\{0,\ldots,r\}$, $q>0$ and $\xi\in\mathbb R^n$,
\begin{equation}\label{03092018E2}
	\langle Q^{\frac{1}{2}}(B^T)^k\xi\rangle^q\le C_1^{1+q}\langle\mathbb P_k\xi\rangle^q.
\end{equation}
It follows from \eqref{01062018E2} that for all $k\in\{0,\ldots,r\}$, the canonical Euclidean orthogonal complement of the vector space $V_k$ is given by
$$V_k^{\perp} = \Ker(Q^{\frac{1}{2}})\cap\Ker(Q^{\frac{1}{2}}B^T)\cap\ldots\cap\Ker(Q^{\frac{1}{2}}(B^T)^k).$$
As a consequence, the following estimates hold for all $k\in\{0,\ldots,r\}$, $q>0$ and $\xi\in\mathbb R^n$,
\begin{multline*}
	\sum_{j=0}^k\langle Q^{\frac{1}{2}}(B^T)^j\xi\rangle^q = \sum_{j=0}^k\langle Q^{\frac{1}{2}}(B^T)^j\mathbb P_k\xi\rangle^q
	\le \left[\sum_{j=0}^k\max(1,\Vert Q^{\frac{1}{2}}(B^T)^j\Vert^q)\right]\langle\mathbb P_k\xi\rangle^q \\
	\le (r+1)\max_{0\le j\le r}(1,\Vert Q^{\frac{1}{2}}(B^T)^j\Vert^q)\langle\mathbb P_k\xi\rangle^q.
\end{multline*}
This proves \eqref{03092018E2}, since we have that for all $k\in\{0,\ldots,r\}$, $q>0$ and $\xi\in\mathbb R^n$,
$$\langle Q^{\frac{1}{2}}(B^T)^k\xi\rangle^q\le\sum_{j=0}^k\langle Q^{\frac{1}{2}}(B^T)^j\xi\rangle^q.$$ 
As a consequence of \eqref{03092018E2}, Theorem \ref{30082018T1} and the Plancherel theorem, we then deduce that there exists some positive constants $C_1,C_2>1$ and $0<t_0<1$ such that for all $k\in\{0,\ldots,r-1\}$, $q>0$, $0<t<t_0$ and $u\in L^2(\mathbb{R}^n)$,
\begin{align*}
	\Vert\langle Q^{\frac{1}{2}}(B^T)^kD_x\rangle^qe^{-t\mathcal{P}}u\Vert_{L^2(\mathbb{R}^n)} 
	& \le C_1^{1+q}\sum_{j=0}^k\Vert\langle\mathbb{P}_j D_x\rangle^qe^{-t\mathcal{P}}u\Vert_{L^2(\mathbb{R}^n)} \\[5pt]
	& \le C_1^{1+q}\sum_{j=0}^k\frac{C_2^{1+q}}{t^{q(\frac{1}{2s}+j)}}\ e^{\frac{1}{2}\Tr(B)t}\ q^{\frac{q}{2s}}\ \Vert u\Vert_{L^2(\mathbb{R}^n)} \\[5pt]
	& \le (r+1)\frac{(C_1C_2)^{1+q}}{t^{q(\frac{1}{2s}+k)}}\ e^{\frac{1}{2}\Tr(B)t}\ q^{\frac{q}{2s}}\ \Vert u\Vert_{L^2(\mathbb{R}^n)}.
\end{align*}
This ends the proof of Corollary \ref{28092018C1}.

\section{Observability estimates for fractional Ornstein-Uhlenbeck semigroups}
\label{sec_OeffOUs}

In this section, we prove Theorem \ref{23052018T1} and Theorem \ref{20082019T1}.

\subsection{Proof of Theorem \ref{23052018T1}} This first subsection is devoted to the proof of Theorem \ref{23052018T1}. We consider $\mathcal{P}$ the fractional Ornstein-Uhlenbeck operator defined in \eqref{06022018E3} and equipped with the domain \eqref{19062018E1}. We assume that the Kalman rank condition \eqref{10052018E4} holds. Moreover, we consider the operator
$$\mathcal{P}_{co} = \mathcal{P} + \frac{1}{2}\Tr(B),$$
equipped with the domain $D(\mathcal{P})$. Let $\omega$ be a measurable subset of $\mathbb{R}^n$. To establish the observability estimate (\ref{06022018E2}), we use the following theorem established by K. Beauchard and K. Pravda-Starov in \cite{MR3732691} (Theorem 2.1), which is essentially a reformulation of a previous result due to L. Miller \cite{MR2679651} (involving a telescopic series), following the seminal ideas in \cite{MR1312710}.

\begin{thm}\label{08122017T3} Let $\omega$ be a measurable subset of $\mathbb{R}^n$ with positive Lebesgue measure, $(\pi_k)_{k\geq1}$ be a family of orthogonal projections defined on $L^2(\mathbb{R}^n)$ and $(e^{tA})_{t\geq0}$ be a contraction semigroup on $L^2(\mathbb{R}^n)$. Assume that there exist $c_1,c_2,a,b,t_0,m>0$ some positive constants with $a<b$ such that the following spectral inequality 
\begin{gather}\label{11050218E3}
	\forall u\in L^2(\mathbb{R}^n), \forall k\geq1,\quad \Vert\pi_k u\Vert_{L^2(\mathbb{R}^n)}\le e^{c_1k^a}\Vert\pi_ku\Vert_{L^2(\omega)},
\end{gather}
and the following dissipation estimate
\begin{gather}\label{11050218E4}
	\forall u\in L^2(\mathbb{R}^n), \forall k\geq1, \forall 0<t<t_0,\quad \Vert(1-\pi_k)(e^{tA}u)\Vert_{L^2(\mathbb{R}^n)}\le \frac{1}{c_2}e^{-c_2t^mk^b}\Vert u\Vert_{L^2(\mathbb{R}^n)},
\end{gather}
hold. Then, there exists a positive constant $C>1$ such that the following observability estimate holds
$$\forall T>0, \forall u\in L^2(\mathbb{R}^n),\quad \Vert e^{TA}u\Vert^2_{L^2(\mathbb{R}^n)}\le C\exp\left(\frac{C}{T^{\frac{am}{b-a}}}\right)\int_0^T\Vert e^{tA}u\Vert^2_{L^2(\omega)}\ dt.$$
\end{thm}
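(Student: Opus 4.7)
The proof plan follows the Lebeau--Robbiano strategy in the form developed by L. Miller. The idea is to partition $[0,T]$ via a geometric sequence of times $0 = t_0 < t_1 < t_2 < \ldots \nearrow T$ with steps $\tau_k = t_{k+1}-t_k$ of the form $\tau_k = (1-\delta)T\delta^k$ for some $\delta\in(0,1)$, and to introduce an increasing sequence of spectral cutoffs $M_k = K\eta^k$ for some $\eta>1$ and $K\geq 1$ to be tuned. The parameters $\delta$, $\eta$ are chosen so that on each subinterval the dissipation exponent $c_2\tau_k^m M_k^b$ dominates the spectral exponent $c_1 M_k^a$ (this is where the hypothesis $a<b$ enters crucially), while $K$ will absorb the dependence on $T$.

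The key step is to derive a one-step recursion. Setting $v_k = e^{t_k A}u$, I would decompose $v_{k+1} = \pi_{M_k}v_{k+1} + (1-\pi_{M_k})v_{k+1}$. For the low-frequency piece, the spectral inequality yields $\Vert\pi_{M_k}v_{k+1}\Vert^2 \leq e^{2c_1 M_k^a}\Vert\pi_{M_k}v_{k+1}\Vert^2_{L^2(\omega)}$, and using the triangle inequality together with $\Vert\cdot\Vert_{L^2(\omega)}\leq\Vert\cdot\Vert_{L^2(\mathbb{R}^n)}$ and the dissipation estimate applied on the last half of the subinterval, one obtains
$$\Vert\pi_{M_k}v_{k+1}\Vert^2_{L^2(\omega)} \leq \tfrac{2}{\tau_k}\int_{t_k}^{t_{k+1}}\Vert e^{sA}u\Vert^2_{L^2(\omega)}\,ds + \tfrac{2}{c_2^2}e^{-2c_2(\tau_k/2)^m M_k^b}\Vert v_k\Vert^2.$$
Combined with the direct dissipation bound on $(1-\pi_{M_k})v_{k+1}$, this produces a recursion
$$\Vert v_{k+1}\Vert^2 \leq A_k\int_{t_k}^{t_{k+1}}\Vert e^{sA}u\Vert^2_{L^2(\omega)}\,ds + B_k\Vert v_k\Vert^2,$$
with $A_k = \tfrac{C}{\tau_k}e^{2c_1 M_k^a}$ and $B_k = C e^{2c_1 M_k^a}e^{-2c_2(\tau_k/2)^m M_k^b}$.

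To close the argument, I would impose the balance $c_2(\tau_k/2)^m M_k^b \geq 4c_1 M_k^a$, which (using $\tau_k \sim T\delta^k$ and $M_k = K\eta^k$) boils down to $K^{b-a}\geq C\cdot \delta^{-mk}\eta^{-(b-a)k}T^{-m}$, hence is achieved uniformly in $k$ for $\eta$ chosen so that $\eta^{b-a} = \delta^{-m}$ and $K^{b-a} = C'T^{-m}$. With this choice $B_k \leq e^{-c_1 M_k^a}\to 0$ super-exponentially, while $M_0^a \sim K^a \sim T^{-am/(b-a)}$. Introducing the multipliers $\alpha_k = e^{-4c_1 M_k^a}$, one then checks that the recursion transforms into a genuine telescoping inequality
$$\alpha_{k+1}\Vert v_{k+1}\Vert^2 - \alpha_k\Vert v_k\Vert^2 \leq C\alpha_{k+1}A_k\int_{t_k}^{t_{k+1}}\Vert e^{sA}u\Vert^2_{L^2(\omega)}\,ds,$$
after using $\alpha_{k+1}B_k\leq\alpha_k$. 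Summing over $k\geq 0$, using the contraction property $\Vert e^{TA}u\Vert^2\leq\Vert v_k\Vert^2$ for all $k$, and bounding $\alpha_{k+1}A_k$ uniformly by $C e^{-2c_1 M_k^a}/\tau_k$, the right-hand side collapses to $C e^{2c_1 M_0^a}\int_0^T\Vert e^{sA}u\Vert^2_{L^2(\omega)}\,ds$, giving the claimed observability with constant $C\exp(C/T^{am/(b-a)})$.

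The main obstacle is the bookkeeping that ensures the multipliers $\alpha_k$ simultaneously produce a true telescoping (requiring $\alpha_{k+1}B_k\leq\alpha_k$, i.e.\ a precise algebraic condition between the sequences $M_k$, $\tau_k$) and a summable right-hand side (requiring $\alpha_{k+1}A_k$ to decay fast enough in $k$). The sharp exponent $am/(b-a)$ in the final estimate emerges from the scaling balance $M_k^a\simeq \tau_k^m M_k^b$, which forces $M_0\sim \tau_0^{-m/(b-a)}\sim T^{-m/(b-a)}$ and hence the leading spectral cost $e^{c_1 M_0^a}\sim \exp(C\,T^{-am/(b-a)})$; making this extraction rigorous while keeping the telescoping intact is the technical heart of the proof.
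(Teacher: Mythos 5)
There is a genuine gap, and it is structural rather than a matter of bookkeeping: your telescoping runs in the wrong direction in time. (For context: the paper does not reprove this statement; it invokes it from Beauchard--Pravda-Starov, Theorem 2.1, which is Miller's telescopic-series form of the Lebeau--Robbiano strategy, and that argument has exactly the opposite orientation to yours.) You let the subintervals accumulate at the \emph{final} time $T$, with cutoffs $M_k\to\infty$ and multipliers $\alpha_k=e^{-4c_1M_k^a}\to 0$ as $t_k\to T$. Summing your telescoped inequality over $0\le k\le N-1$ then gives
\begin{equation*}
\alpha_N\Vert v_N\Vert^2_{L^2(\mathbb{R}^n)}\ \le\ \alpha_0\Vert u\Vert^2_{L^2(\mathbb{R}^n)}+\sum_{k}\alpha_{k+1}A_k\int_{t_k}^{t_{k+1}}\Vert e^{sA}u\Vert^2_{L^2(\omega)}\,ds,
\end{equation*}
in which the left-hand side is weighted by $\alpha_N\to0$ while the uncancelled term $\alpha_0\Vert u\Vert^2$ sits on the right and is not controlled by the observation; no observability estimate can be extracted. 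The defect cannot be repaired by re-weighting or by iterating the recursion $\Vert v_{k+1}\Vert^2\le A_k\int+B_k\Vert v_k\Vert^2$ forward: that iteration always leaves a residual $\big(\prod_j B_j\big)\Vert u\Vert^2$, and the coefficient of the last observation integral is $A_k=\frac{C}{\tau_k}e^{2c_1M_k^a}$, which blows up doubly exponentially near $T$ with nothing to compensate it (the dissipation gains $B_j$ multiply the \emph{earlier}-time norms, not the later ones). In the correct scheme one iterates \emph{backwards} from $T$: the intervals $[T_{j+1},T_j]$, $T_{j+1}=T_j-2\tau_j$, shrink geometrically towards the initial time, the large cutoffs $k\sim\tau_j^{-m/(b-a)}$ are used on the short \emph{early} intervals, and the one-step estimate is weighted by $e^{-2h'/\tau_j^\beta}$ with $\beta=\frac{am}{b-a}$, so that after telescoping the surviving term is $e^{-2h'/\tau_0^\beta}\Vert e^{TA}u\Vert^2$ with $\tau_0\sim T$, while the leftover terms $e^{-2h'/\tau_j^\beta}\Vert e^{T_jA}u\Vert^2\le e^{-2h'/\tau_j^\beta}\Vert u\Vert^2$ vanish at the accumulation end; the telescoping condition is met because $h/h'$ can be made large using $b>a$ (your observation that $\eta^{b-a}=\delta^{-m}$ balances the exponents plays this role). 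This reversal is precisely what produces the cost $C\exp(C/T^{\frac{am}{b-a}})$ without any residual $\Vert u\Vert^2$.

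Two secondary points. First, your intermediate display bounding $\Vert\pi_{M_k}v_{k+1}\Vert^2_{L^2(\omega)}$ by the time-averaged localized norms is not justified as written: localized norms are not monotone along the semigroup, so one must first pass to the global norm via the contraction property, $\Vert v_{k+1}\Vert\le\Vert e^{sA}u\Vert$ for intermediate $s$ in the second half of the interval, and only then apply the spectral inequality and the dissipation estimate at time $s$ before averaging in $s$; this yields your recursion with the stated $A_k$, $B_k$, so the ingredients are right even though the step as displayed would fail. Second, the dissipation hypothesis is only assumed for $0<t<t_0$, whereas your interval lengths are of order $T$; one must restrict to $T$ small (or work on a terminal window of length less than $t_0$ and conclude for large $T$ by monotonicity of the cost), a point your sketch does not address.
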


Notice that in \cite{MR3732691} (Theorem 2.1) the subset $\omega$ is assumed to be open, but the proof works the same when $\omega$ is only a measurable subset.

Let $\pi_k:L^2(\mathbb{R}^n)\rightarrow E_k$, $k\geq1$, be the orthogonal frequency cutoff projection onto the closed subspace
\begin{gather}\label{11052018E5}
	E_k = \{u\in L^2(\mathbb{R}^n),\quad \Supp\widehat{u}\subset[-k,k]^n\}.
\end{gather}
According to Theorem \ref{08122017T3}, it is sufficient to derive a spectral inequality as \eqref{11050218E3} and a dissipation estimate as \eqref{11050218E4} for the orthogonal projections $\pi_k$ to establish the observability estimate \eqref{06022018E2}.

\subsubsection{Spectral inequality} The following theorem is proved by O. Kovrijkine in \cite{MR1840110} (Theorem 3).

\begin{thm}\label{11052018T1} There exists a universal constant $K$ depending only on the dimension $n$ that may be assumed to be greater or equal to $e$ such that for any $J$ a parallelepiped with sides parallel to the coordinate axis and of positive lengths $b_1,\ldots,b_n$ and $\omega$ a $(\gamma,a)$-thick set, then
$$\forall u\in L^2(\mathbb{R}^n),\ \Supp\widehat{u}\subset J,\quad\Vert u\Vert_{L^2(\mathbb{R}^n)}\le \left(\frac{K^n}{\gamma}\right)^{K(\langle a,b\rangle + n)}\Vert u\Vert_{L^2(\omega)},$$
where $b=(b_1,\ldots,b_n)$.
\end{thm}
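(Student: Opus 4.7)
The plan is to adapt the strategy of Kovrijkine, which rests on three pillars: a tiling of $\mathbb R^n$ adapted to the thick set $\omega$, a Bernstein-type derivative inequality for bandlimited functions, and a quantitative Remez-type inequality.

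As a first step I would rescale the variables $x_i \mapsto x_i/a_i$ in order to reduce to the case $a_1 = \cdots = a_n = 1$, so that the cube $C$ of the thickness condition becomes $[0,1]^n$ and the support of $\widehat u$ lies in a parallelepiped of new side-lengths $b'_i = a_ib_i$. Under this reduction the announced inequality becomes
$$\Vert u\Vert_{L^2(\mathbb R^n)}\le \Bigl(\tfrac{K^n}{\gamma}\Bigr)^{K(|b'|_1+n)}\Vert u\Vert_{L^2(\omega)},\qquad |b'|_1 = b'_1+\cdots+b'_n,$$
and the thickness of $\omega$ is preserved. I then tile $\mathbb R^n$ by the unit cubes $Q_k = k+[0,1]^n$, $k\in\mathbb Z^n$; by thickness each intersection $\omega\cap Q_k$ has Lebesgue measure at least $\gamma$. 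The problem reduces to proving, tile by tile, a local inequality relating $\Vert u\Vert_{L^2(Q_k)}$ to $\Vert u\Vert_{L^2(Q_k\cap\omega)}$ with constants of the form $(K^n/\gamma)^{K(|b'|_1+n)}$.

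To establish such a local inequality I would separate \emph{good} tiles from \emph{bad} ones. A tile $Q_k$ is called good if every partial derivative of $u$ is controlled locally by $u$ itself, namely
$$\Vert\partial^\alpha u\Vert^2_{L^2(Q_k)}\le A^{|\alpha|}(b')^{2\alpha}\Vert u\Vert^2_{L^2(Q_k)},\qquad \alpha\in\mathbb N^n,$$
where $A>1$ is a dimensional constant. The global Bernstein inequality $\Vert\partial^\alpha u\Vert_{L^2(\mathbb R^n)}\le (b')^\alpha\Vert u\Vert_{L^2(\mathbb R^n)}$, available thanks to the Fourier support assumption, combined with a Chebyshev-type summation over the tiles, ensures that for $A$ large enough (depending only on $n$) the bad tiles carry at most half of the total $L^2$ mass of $u$. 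On each good tile, $u$ being the restriction of an entire function of exponential type (Paley-Wiener), I would approximate $u$ on $Q_k$ by a polynomial $P$ of total degree $N\simeq |b'|_1$ whose Taylor coefficients at the centre of $Q_k$ are controlled by the goodness property, with geometrically decaying remainder. The multivariate Remez--Bang inequality then gives, for any measurable $E\subset Q_k$ with $|E|\ge\gamma$,
$$\Vert P\Vert_{L^\infty(Q_k)}\le (C/\gamma)^{CN}\Vert P\Vert_{L^\infty(E)},$$
and converting between $L^\infty$ and $L^2$ norms through a reverse Bernstein estimate on $Q_k$ yields the target local bound with the prescribed exponent.

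Summing the local bound over good tiles and absorbing the bad-tile contribution into the left-hand side closes the estimate. The principal obstacle is to keep every constant sharp enough to end up with an exponent of order $K(\langle a,b\rangle+n)$ rather than something of order $Kn\cdot\langle a,b\rangle$: one must calibrate the threshold $A$, the Taylor truncation order $N$, and the propagation of the one-dimensional Remez factors across the $n$ coordinate directions so that they combine additively, which is exactly the quantitative multivariate bookkeeping done by Kovrijkine. The rest of the argument is then essentially a routine (but careful) assembly of the three pillars above.
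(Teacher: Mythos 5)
The paper itself does not prove this statement: it is quoted verbatim as Theorem 3 of Kovrijkine \cite{MR1840110}, so the only comparison available is with Kovrijkine's argument, whose outer layers (rescaling to $a=(1,\ldots,1)$, tiling $\mathbb R^n$ by unit cubes, splitting into good and bad cubes via the Bernstein inequality and a Chebyshev-type summation so that bad cubes carry at most half of $\Vert u\Vert_{L^2}^2$) your sketch reproduces correctly. The gap is in the local estimate on good cubes, which is the heart of the matter and where your substitution of the key lemma fails quantitatively.

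Concretely, write $\vert b'\vert_1=\langle a,b\rangle$. If you truncate the Taylor expansion of $u$ at degree $N\simeq \vert b'\vert_1$, the remainder on a good cube is of size $\varepsilon_N\Vert u\Vert_{L^2(Q_k)}$ with $\varepsilon_N\simeq (c\vert b'\vert_1/N)^N\simeq 2^{-N}$, while the multivariate Remez (Brudnyi--Ganzburg) inequality you invoke costs a factor $(C_n/\gamma)^N$; after the triangle inequality the error term appears multiplied by this factor, and $(C_n/\gamma)^N2^{-N}$ is not small for small $\gamma$, so the remainder cannot be absorbed. To absorb it you are forced to take $N\gtrsim \vert b'\vert_1/\gamma$, and then the final constant is of order $(C_n/\gamma)^{c\vert b'\vert_1/\gamma}$, whose logarithm grows polynomially in $1/\gamma$ --- strictly weaker than the claimed $(K^n/\gamma)^{K(\langle a,b\rangle+n)}$, whose logarithm is linear in $\ln(1/\gamma)$. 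This is precisely why Kovrijkine does not use polynomial approximation at all: on good cubes he restricts $u$ to suitably chosen segments and applies a local (three-circles/harmonic-measure type) estimate for analytic functions, $\sup_I\vert\phi\vert\le (C/\vert E\vert)^{\ln M/\ln 2}\sup_E\vert\phi\vert$, in which the role of the degree is played by $\ln M\lesssim \vert b'\vert_1$ coming from the exponential-type bound on good cubes; this produces the multiplicative structure $(\langle a,b\rangle+n)\cdot\ln(K^n/\gamma)$ directly. So either you import that analytic lemma (making the Taylor/Remez pillar superfluous), or you obtain a genuinely weaker inequality --- which would still suffice for the null-controllability argument of Section 4, where only an estimate of the form $e^{c_1k}$ with some constant $c_1(\gamma,a)$ is needed, but does not prove the theorem as stated. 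A further, minor, point: goodness gives $L^2$ control of derivatives on $Q_k$, so bounding the Taylor coefficients at the centre pointwise requires an additional Sobolev embedding step, which should be made explicit even though it only costs harmless factors.
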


We now assume that the set $\omega$ is thick. Let $u\in L^2(\mathbb{R}^n)$ and $k\geq1$. It follows from the definition of $\pi_k$, see \eqref{11052018E5}, that $\pi_ku\in L^2(\mathbb{R}^n)$ and $\widehat{\pi_ku}$ is supported in $[-k,k]^n$. Therefore, we deduce from Theorem \ref{11052018T1} that
\begin{align}\label{11052018E6}
	\forall u\in L^2(\mathbb{R}^n), \forall k\geq1,\quad \Vert\pi_ku\Vert_{L^2(\mathbb{R}^n)}\le e^{c_1k}\Vert\pi_ku\Vert_{L^2(\omega)},
\end{align}
where
$$c_1 = \left(\ln\left[\left(\frac{K^n}{\gamma}\right)^{nK}\right]\right)_+ + \left(\ln\left[\left(\frac{K^n}{\gamma}\right)^{2K(a_1+\ldots+a_n)}\right]\right)_++1>0,$$
and where $x_+ = \max(x,0)$ for all $x\in\mathbb{R}$.

\subsubsection{Dissipation estimate} As a consequence of Theorem \ref{30082018T1}, there exist some positive constants $c>1$ and $0<t_0<1$ such that for all $N\geq0$, $0<t<t_0$ and $u\in L^2(\mathbb{R}^n)$,
\begin{equation}\label{08062018E2}
	\Vert\vert D_x\vert^{2sN}e^{-t\mathcal{P}_{co}}u\Vert_{L^2(\mathbb{R}^n)}\le \frac{c^{1+2sN}}{t^{2sN\Gamma}}\ (2sN)^N\ \Vert u\Vert_{L^2(\mathbb{R}^n)},
\end{equation}
with the convention that $0^0 = 1$ and where we set $\Gamma = \frac{1}{2s} + r$.
It follows from \eqref{08062018E2} that for all $0<t<t_0$ and $u\in L^2(\mathbb{R}^n)$,
\begin{align}\label{20082019E2}
	\Vert\exp\left[\frac{1}{4es}\left[\frac{t^{\Gamma}}{c}\right]^{2s}\vert D_x\vert^{2s}\right]e^{-t\mathcal{P}_{co}}u\Vert_{L^2(\mathbb{R}^n)} & \le \sum_{N=0}^{+\infty}\frac{1}{2^N}\left[\frac{t^{\Gamma}}{c}\right]^{2sN}\frac{1}{(2es)^NN!}\Vert\vert D_x\vert^{2sN}e^{-t\mathcal{P}_{co}}u\Vert_{L^2(\mathbb{R}^n)}\\[5pt]
	& \le c\sum_{N=0}^{+\infty}\frac{1}{2^N}\frac{(2sN)^N}{(2es)^NN!}\Vert u\Vert_{L^2(\mathbb{R}^n)}. \nonumber
\end{align}
Moreover, we have $(2sN)^N\le (2es)^N\ N!$ for all $N\geq0$, see e.g. formula (0.3.12) in \cite{MR2668420}, and \eqref{20082019E2} implies the following estimates
\begin{gather}\label{05022015E13}
	\forall t\in(0,t_0),\forall u\in L^2(\mathbb{R}^n),\quad \Vert e^{Ct^{2s\Gamma}\vert D_x\vert^{2s}}e^{-t\mathcal{P}_{co}}u\Vert_{L^2(\mathbb{R}^n)}\le 2c\Vert u\Vert_{L^2(\mathbb{R}^n)},
\end{gather}
where we set $C = \frac{1}{4ec^{2s}s}$.
It follows from \eqref{05022015E13} and the Plancherel theorem that for all $k\geq1$,
\begin{align*}
	\Vert(1-\pi_k)e^{-t\mathcal{P}_{co}}u\Vert_{L^2(\mathbb{R}^n)} 
	& = \frac{1}{(2\pi)^{\frac{n}{2}}}\Vert\mathbbm{1}_{\mathbb{R}^n\setminus[-k,k]^n}\ \widehat{e^{-t\mathcal{P}_{co}}u}\Vert_{L^2(\mathbb{R}^n)} \\[5pt]
	& = \frac{1}{(2\pi)^{\frac{n}{2}}}\Vert\mathbbm{1}_{\mathbb{R}^n\setminus[-k,k]^n}\ e^{-Ct^{2s\Gamma}\vert\xi\vert^{2s}} e^{Ct^{2s\Gamma}\vert\xi\vert^{2s}}\widehat{e^{-t\mathcal{P}_{co}}u}\Vert_{L^2(\mathbb{R}^n)} \\[5pt]
	& \le e^{-Ct^{2s\Gamma}k^{2s}}\Vert e^{Ct^{2s\Gamma}\vert D_x\vert^{2s}}e^{-t\mathcal{P}_{co}}u\Vert_{L^2(\mathbb{R}^n)} \\[5pt]
	& \le 2c e^{-Ct^{2s\Gamma}k^{2s}}\Vert u\Vert_{L^2(\mathbb{R}^n)}.
\end{align*}
Setting
$$c_2 = \min\left(\frac{1}{2c},C\right)\quad \text{and\quad $m = 2s\Gamma$},$$
we proved that for all $k\geq1$, $0<t<t_0$ and $u\in L^2(\mathbb{R}^n)$,
\begin{align}\label{08122017E20}
	\Vert(1-\pi_k)e^{-t\mathcal{P}_{co}}u\Vert_{L^2(\mathbb{R}^n)}\le\frac{1}{c_2}\ e^{-c_2t^mk^{2s}}\Vert u\Vert_{L^2(\mathbb{R}^n)}.
\end{align}

\subsubsection{Observability estimate} Since $2s>1$, we deduce from \eqref{11052018E6}, (\ref{08122017E20}) and Theorem \ref{08122017T3} that there exists a positive constant $C>1$ such that 
$$\forall T>0, \forall u\in L^2(\mathbb{R}^n),\quad \Vert e^{-T\mathcal{P}_{co}}u\Vert^2_{L^2(\mathbb{R}^n)}\le C\exp\left(\frac{C}{T^{\frac{1+2rs}{2s-1}}}\right)\int_0^T\Vert e^{-t\mathcal{P}_{co}}u\Vert^2_{L^2(\omega)}\ dt.$$
This proves the observability estimate \eqref{06022018E2} and ends the proof of Theorem \ref{23052018T1}.

\subsection{Proof of Theorem \ref{20082019T1}} To end this section, we prove Theorem \ref{20082019T1}. The following proof is inspired by \cite{MR3816981} (Section 4). Let $T>0$ and $\omega\subset\mathbb{R}^n$ be a measurable subset. We assume that $\omega$ is not thick. Since the operator $(-\Delta_x)^s$ equipped with the domain $H^s(\mathbb{R}^n)$ is selfadjoint from Corollary \ref{16032018C2}, it follows from the Hilbert Uniqueness Method, see \cite{MR2302744} (Theorem 2.44), that the fractional heat equation \eqref{24052018E4} is null-controllable from the set $\omega$ in time $T$ if and only if there exists a positive constant $C_T>0$ such that for all $g\in L^2(\mathbb R^n)$,  
\begin{equation}\label{29082018E2}
	\Vert e^{-T(-\Delta_x)^s}g\Vert^2_{L^2(\mathbb{R}^n)}\le C_T\int_0^T\Vert e^{-t(-\Delta_x)^s}g\Vert^2_{L^2(\omega)}\ dt.
\end{equation}
To prove Theorem \ref{20082019T1}, it is then sufficient to construct a sequence of functions $(g_{0,k})_k$ in $L^2(\mathbb{R}^n)$ such that the observability estimate \eqref{29082018E2} does not hold. Since the set $\omega$ is not thick, we have
$$\forall\gamma>0, \forall a\in(\mathbb R^*_+)^n, \exists \xi\in\mathbb{R}^n,\quad \vert\omega\cap(\xi+[0,a_1]\times\ldots\times[0,a_n])\vert<\gamma\prod_{j=1}^na_j.$$ 
Therefore, for all $k\geq1$, there exists $\xi_k\in\mathbb R^n$ such that the Lebesgue measure of the set $\omega\cap(\xi_k+[0,2k]^n)$ satisfies $\vert\omega\cap(\xi_k+[0,2k]^n)\vert < 1/k$. Setting $x_k = \xi_k+(k,\ldots,k)\in\mathbb{R}^n$, this inequality writes for all $k\geq1$ as
\begin{equation}\label{23082018E4}
	\vert\omega\cap\mathcal{B}(x_k,k)\vert<\frac1k,
\end{equation}
where $\mathcal{B}(x_k,k)\subset\mathbb R^n$ denotes the Euclidean ball centred in $x_k$ with radius $k$. With the points $x_k$, we construct the functions $g_{0,k} = f(\cdot-x_k)\in H^s(\mathbb{R}^n)$ for all $k\geq1$, where $f = \mathscr{F}^{-1}(e^{-\vert\xi\vert^{2s}})\in H^s(\mathbb R^n)$. Moreover, for all $k\geq1$, we define $g_k = e^{-t(-\Delta_x)^s}g_{0,k}\in L^2(\mathbb{R}^n)$. It follows from the definition of the functions $g_{0,k}$ that for all $k\geq1$, $t\geq0$ and $\xi\in\mathbb{R}^n$, $\widehat{g_k}(t,\xi) = e^{-i\langle x_k,\xi\rangle}e^{-(1+t)\vert\xi\vert^{2s}},$
and as a consequence, the functions $g_k$ are given by
\begin{equation}\label{23082018E5}
	\forall k\geq1, \forall t>0, \forall x\in\mathbb{R}^n,\quad g_k(t,x) = \frac{1}{(1+t)^{\frac{n}{2s}}}\ f\left(\frac{x-x_k}{(1+t)^{\frac{1}{2s}}}\right).
\end{equation}
It follows from \eqref{23082018E5} and the substitution rule that for all $k\geq1$,
\begin{multline}\label{29082018E5}
	\Vert g_k(T,\cdot)\Vert^2_{L^2(\mathbb{R}^n)} 
	= \frac{1}{(1+T)^{\frac ns}}\int_{\mathbb{R}^n}\left\vert f\left(\frac{x-x_k}{(1+T)^{\frac{1}{2s}}}\right)\right\vert^2\ dx \\
	= \frac{1}{(1+T)^{\frac ns}}\int_{\mathbb{R}^n}\left\vert f\left(\frac{x}{(1+T)^{\frac{1}{2s}}}\right)\right\vert^2\ dx>0.
\end{multline}
Therefore, the quantities $\Vert g_k(T,\cdot)\Vert_{L^2(\mathbb{R}^n)}$ are in fact independent of the parameter $k\geq1$. On the other hand, we deduce anew from the substitution rule that
\begin{multline*}
	\int_0^T\Vert g_k(t,\cdot)\Vert^2_{L^2(\omega)}\ dt 
	= \int_0^T\int_{\omega}\frac{1}{(1+t)^{\frac ns}}\left\vert f\left(\frac{x-x_k}{(1+t)^{\frac{1}{2s}}}\right)\right\vert^2\ dxdt \\[5pt]
	= \int_0^T\int_{\omega-x_k}\frac{1}{(1+t)^{\frac ns}}\left\vert f\left(\frac{x}{(1+t)^{\frac{1}{2s}}}\right)\right\vert^2\ dxdt.
\end{multline*}
By splitting the previous integral in two parts, we derive the following estimate :
\begin{multline}\label{23082018E6}
	\int_0^T\Vert g_k(t,\cdot)\Vert^2_{L^2(\omega)}\ dt 
	\le \int_0^T\int_{(\omega-x_k)\cap\mathcal B(0,k)}\frac{1}{(1+t)^{\frac ns}}\left\vert f\left(\frac{x}{(1+t)^{\frac{1}{2s}}}\right)\right\vert^2\ dxdt \\[5pt]
	+ \int_0^T\int_{\vert x\vert>k}\frac{1}{(1+t)^{\frac ns}}\left\vert f\left(\frac{x}{(1+t)^{\frac{1}{2s}}}\right)\right\vert^2\ dxdt.
\end{multline}
Now, we study one by one the two integrals appearing in the right-hand-side of \eqref{23082018E6} : \\[5pt]
\textbf{1.} First, it follows from the invariance by translation of the Lebesgue measure that
\begin{multline*}\label{23082018E7}
	\int_0^T\int_{(\omega-x_k)\cap\mathcal B(0,k)}\frac{1}{(1+t)^{\frac ns}}\left\vert f\left(\frac{x}{(1+t)^{\frac{1}{2s}}}\right)\right\vert^2\ dxdt  \\[5pt]
	\le T\ \Vert f\Vert^2_{L^{\infty}(\mathbb{R}^n)}\ \vert(\omega-x_k)\cap\mathcal B(0,k)\vert 
	= T\ \Vert f\Vert^2_{L^{\infty}(\mathbb{R}^n)}\ \vert \omega\cap\mathcal B(x_k,k)\vert,
\end{multline*}
and \eqref{23082018E4} implies the following convergence :
\begin{equation}\label{23082018E7}
	\int_0^T\int_{(\omega-x_k)\cap\mathcal B(0,k)}\frac{1}{(1+t)^{\frac ns}}\left\vert f\left(\frac{x}{(1+t)^{\frac{1}{2s}}}\right)\right\vert^2\ dxdt
	\le \frac{T}{k}\ \Vert f\Vert^2_{L^{\infty}(\mathbb{R}^n)}
	\underset{k\rightarrow+\infty}{\rightarrow}0.
\end{equation}
\textbf{2.} To control the second integral, we begin by checking that
\begin{equation}\label{29082018E4}
	\frac{1}{(1+t)^{\frac n{2s}}}\ f\left(\frac{x}{(1+t)^{\frac{1}{2s}}}\right)\in L^2([0,T]\times\mathbb R^n).
\end{equation}
It follows from the definition of the function $f$ and the substitution rule that for all $t\in[0,T]$ and $x\in\mathbb{R}^n$,
\begin{multline*}
	\frac{1}{(1+t)^{\frac n{2s}}}\ f\left(\frac{x}{(1+t)^{\frac{1}{2s}}}\right) 
	= \frac{1}{(2\pi)^n}\frac{1}{(1+t)^{\frac n{2s}}}\ \int_{\mathbb{R}^n}e^{i\langle x,\xi\rangle/(1+t)^{\frac1{2s}}}e^{-\vert\xi\vert^{2s}}\ d\xi \\[5pt]
	= \frac{1}{(2\pi)^n}\int_{\mathbb R^n} e^{i\langle x,\xi\rangle} e^{-(1+t)\vert\xi\vert^{2s}}\ d\xi
	= \mathscr{F}^{-1}_x(e^{-(1+t)\vert\xi\vert^{2s}})(t,x),
\end{multline*}
where $\mathscr{F}^{-1}_x$ denotes the inverse partial Fourier transform in the $x$ variable. Since the function $(t,\xi)\mapsto e^{-(1+t)\vert\xi\vert^{2s}}$ belongs to the space $L^2([0,T]\times\mathbb R^n)$, \eqref{29082018E4} is implied by the Plancherel theorem. Then, we deduce from the dominated convergence theorem that
\begin{equation}\label{23082018E8}
	\int_0^T\int_{\vert x\vert>k}\frac{1}{(1+t)^{\frac ns}}\left\vert f\left(\frac{x}{(1+t)^{\frac{1}{2s}}}\right)\right\vert^2\ dxdt \underset{k\rightarrow+\infty}{\rightarrow}0.
\end{equation}
As a consequence of \eqref{23082018E6}, \eqref{23082018E7} and \eqref{23082018E8}, the following convergence holds
\begin{equation}\label{29082018E6}
	\int_0^T\Vert g_k(t,\cdot)\Vert^2_{L^2(\omega)}\ dt\underset{k\rightarrow+\infty}{\rightarrow}0.
\end{equation}
We deduce from \eqref{29082018E5} and \eqref{29082018E6} that the observability estimate \eqref{29082018E2} does not hold. This ends the proof of Theorem \ref{20082019T1}.

\section{Global subelliptic estimates for fractional Ornstein-Uhlenbeck operators}
\label{sec_GLseffOUo}

In this section, we investigate the $L^2$ subelliptic properties enjoyed by fractional Ornstein-Uhlenbeck operators. Let $\mathcal{P}$ be the fractional Ornstein-Uhlenbeck operator defined in \eqref{06022018E3} and equipped with the domain \eqref{19062018E1}.  We assume that the Kalman rank condition \eqref{10052018E4} holds and we denote by $0\le r\le n-1$ the smallest integer satisfying \eqref{05052018E4}. Moreover, for all $0\le k\le r$, we consider $\mathbb P_k$ the orthogonal projection onto the vector subspace $V_k$ defined in \eqref{01062018E2}.

\subsection{Proof of Theorem \ref{28092018T1}} By using some results of interpolation theory as in \cite{MR3710672} (Subsection 2.4), we establish Theorem \ref{28092018T1}. First of all, we will exploit the following result, stated as Proposition 2.7 in \cite{MR3710672} and whose proof is given in \cite{MR2523200} (Corollary 5.13), which allows to localize the domain of an operator from the smoothing properties of the associated semigroup.
\begin{prop}[Proposition 2.7 in \cite{MR3710672}]\label{14062020P1} Let $X$ be a Hilbert space and $A:D(A)\subset X\rightarrow X$ be a maximal accretive operator such that $(-A,D(A))$ is the generator of a strongly continuous semigroup $(T(t))_{t\geq0}$. Assume that there exists a Banach space $E\subset X$, $\rho>1$ and $C>0$ such that
$$\forall t>0,\quad \Vert T(t)\Vert_{\mathcal L(X,E)}\le\frac C{t^{\rho}},$$
and that $t\mapsto T(t)u$ is measurable with values in $E$ for each $u\in X$. Then, the following continuous inclusion holds
$$D(A)\subset(X,E)_{\frac1\rho,2},$$
where $(X,E)_{\frac1\rho,2}$ denotes the space obtained by real interpolation.
\end{prop}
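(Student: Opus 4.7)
The plan is to establish the continuous embedding $D(A) \hookrightarrow (X, E)_{1/\rho, 2}$ via the K-functional characterization of the real interpolation space:
$$u \in (X, E)_{1/\rho, 2} \ \Longleftrightarrow \ \int_0^\infty \bigl(t^{-1/\rho} K(t, u)\bigr)^2 \frac{dt}{t} < +\infty,$$
where $K(t, u) = \inf\{\Vert x\Vert_X + t \Vert e\Vert_E : u = x + e,\ x \in X,\ e \in E\}$ denotes the Peetre K-functional. Equivalently, one may argue through the J-method characterization, but the K-method is the most direct route.

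For $u \in D(A)$ and each parameter $\lambda > 0$, the semigroup furnishes a canonical splitting
$$u = \bigl(u - T(\lambda) u\bigr) + T(\lambda) u \in X + E.$$
On the $X$-part, the Duhamel identity $u - T(\lambda) u = \int_0^\lambda T(s) Au \, ds$ (valid for $u \in D(A)$), combined with the local boundedness of the $C_0$-semigroup $\Vert T(s)\Vert_{\mathcal{L}(X)} \le M$ on bounded intervals (which follows from the Hille-Yosida theorem applied to the maximal accretive $A$), yields $\Vert u - T(\lambda) u\Vert_X \le M \lambda \Vert Au\Vert_X$. On the $E$-part, the smoothing hypothesis gives directly $\Vert T(\lambda) u\Vert_E \le C \lambda^{-\rho} \Vert u\Vert_X$. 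Together these produce the family of K-functional bounds
$$K(t, u) \le M \lambda \Vert Au\Vert_X + C\, t\, \lambda^{-\rho} \Vert u\Vert_X, \quad \lambda > 0,$$
which must then be integrated against the weight $t^{-2/\rho}\, dt/t$ on $(0,\infty)$ to conclude membership in $(X,E)_{1/\rho,2}$.

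The delicate point, and the main obstacle, is obtaining the sharp interpolation index $1/\rho$ rather than only $1/(\rho+1)$, which is what a naive pointwise $\lambda$-optimization in the above bound would produce. Closing this gap requires using the whole family of semigroup decompositions simultaneously, calibrating $\lambda$ dyadically in $t$ and exploiting the measurability of $t \mapsto T(t)u$ with values in $E$ (explicitly assumed in the statement) to integrate the smoothing estimate against $d\lambda/\lambda$; the hypothesis $\rho > 1$ is crucial to ensure the absolute integrability of the relevant tails. Equivalently, one can work through the discrete J-method by setting $v_k = T(2^{-k-1}) u - T(2^{-k}) u = \int_{2^{-k-1}}^{2^{-k}} T(r) Au \, dr$, which satisfies $\Vert v_k\Vert_X \lesssim 2^{-k} \Vert Au\Vert_X$ and $\Vert v_k\Vert_E \lesssim 2^{k(\rho-1)} \Vert Au\Vert_X$ by Duhamel and the smoothing hypothesis, and then verifying the $\ell^2$-summability witnessing $u \in (X, E)_{1/\rho, 2}$. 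Since this refined interpolation-theoretic argument is carried out in full in \cite{MR2523200} (Corollary 5.13), the natural strategy in this paper is to invoke that abstract result as a black box rather than re-derive it from scratch.
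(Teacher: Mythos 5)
The paper offers no proof of this proposition at all: it is imported verbatim as Proposition 2.7 of \cite{MR3710672}, whose proof is given in \cite{MR2523200} (Corollary 5.13), so your closing recommendation to invoke that result as a black box coincides exactly with what the paper does, and on that level there is nothing to compare.

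The sketched direct argument, however, contains a genuine gap, and it is not quite the one you point to. The exponent $1/\rho$ itself is reachable elementarily: summing your own dyadic pieces gives, for $u\in D(A)$, the improved smoothing bound $\Vert T(\lambda)u\Vert_E\lesssim \Vert u\Vert_X+\lambda^{1-\rho}\Vert Au\Vert_X$, and then the choice $\lambda=t^{1/\rho}$ in the splitting $u=(u-T(\lambda)u)+T(\lambda)u$ yields $K(t,u)\lesssim t^{1/\rho}\big(\Vert u\Vert_X+\Vert Au\Vert_X\big)$ for $0<t\le1$. But this — and equally your $J$-method estimates $\Vert v_k\Vert_X\lesssim 2^{-k}\Vert Au\Vert_X$, $\Vert v_k\Vert_E\lesssim 2^{k(\rho-1)}\Vert Au\Vert_X$, which give weighted $J$-functionals $2^{k}J(2^{-k\rho},v_k)\lesssim\Vert Au\Vert_X$ that are bounded but not square-summable — only proves $D(A)\subset(X,E)_{1/\rho,\infty}$. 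The upgrade from second index $\infty$ to the asserted index $2$ cannot come from dyadic calibration, from the measurability hypothesis, or from $\rho>1$: none of these ingredients uses the Hilbert structure of $X$ or the maximal accretivity of $A$, and without those the statement with index $2$ is simply false. For instance, take $X=L^p(\mathbb{R}^n)$ with $p>2$, $A=1-\Delta$ (a contraction semigroup, $D(A)=W^{2,p}$), $E=W^{m,p}$ with $m\ge 3$, $\rho=m/2$: every estimate appearing in your outline is available, yet $(X,E)_{1/\rho,2}=B^{2}_{p,2}$ does not contain $W^{2,p}=F^{2}_{p,2}$ when $p>2$. So the Hilbertian hypotheses are precisely where the exponent $2$ comes from, they are used essentially in \cite{MR2523200} (Corollary 5.13), and as written your outline would establish at best the $(X,E)_{1/\rho,\infty}$ version; the honest proof here is indeed the citation.
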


Let $k\in\{0,\ldots,r-1\}$. We consider the Fourier multiplier $\Lambda_k = \langle \mathbb P_kD_x\rangle$ and $\mathscr{H}_k$ the Hilbert space defined by
$$\mathscr{H}_k = \{u\in L^2(\mathbb{R}^n),\quad \Lambda_k^{\lfloor 2s\rfloor + 1} u\in L^2(\mathbb{R}^n)\},$$
equipped with the scalar product
$$\langle u,v\rangle_{\mathscr{H}_k} = \langle\Lambda_k^{\lfloor 2s\rfloor + 1} u,\Lambda_k^{\lfloor 2s\rfloor + 1} v\rangle_{L^2(\mathbb{R}^n)}.$$
It follows from Theorem \ref{30082018T1} that there exist some positive constants $C_1>1$ and $0<t_0<1$ such that for all $0<t<t_0$ and $u\in L^2(\mathbb{R}^n)$,
\begin{gather}\label{09022018E3}
	\Vert\Lambda_k^{\lfloor 2s\rfloor + 1} e^{-t\mathcal{P}}u\Vert_{L^2(\mathbb{R}^n)}\le \frac{C_1}{t^{1/\theta}}\ e^{\frac{1}{2}\Tr(B)t}\ \Vert u\Vert_{L^2(\mathbb{R}^n)},
\end{gather}
where
\begin{equation}\label{31072018E3}
	\theta = \left[\big(\lfloor2s\rfloor + 1\big)\left(\frac{1}{2s} + k\right)\right]^{-1}\in(0,1).
\end{equation}
Let $0<t_1<t_0$. It follows from Theorem \ref{06022018T1}, \eqref{09022018E3} and the semigroup property of the family of operators $(e^{-t\mathcal{P}})_{t\geq0}$ that for all $t\geq t_0$ and $u\in L^2(\mathbb{R}^n)$,
\begin{align}\label{23082018E2}
	\Vert\Lambda_k^{\lfloor 2s\rfloor + 1} e^{-t\mathcal{P}}u\Vert_{L^2(\mathbb{R}^n)}
	& = \Vert\Lambda_k^{\lfloor 2s\rfloor + 1} e^{-t_1\mathcal{P}}e^{-(t-t_1)\mathcal P}u\Vert_{L^2(\mathbb{R}^n)} \\[5pt]
	& \le \frac{C_1}{t_1^{1/\theta}}\ e^{\frac{1}{2}\Tr(B)t_1}\ \Vert e^{-(t-t_1)\mathcal P}u\Vert_{L^2(\mathbb{R}^n)} \nonumber \\[5pt]
	& \le \frac{C_1}{t_1^{1/\theta}}\ e^{\frac{1}{2}\Tr(B)t}\ \Vert u\Vert_{L^2(\mathbb{R}^n)}. \nonumber
\end{align}
We deduce from \eqref{09022018E3} and \eqref{23082018E2} that there exist some positive constants $C_2>0$ and $\mu>0$ such that for all $t>0$ and $u\in L^2(\mathbb{R}^n)$,
\begin{equation}\label{23082018E3}
	\Vert\Lambda_k^{\lfloor 2s\rfloor + 1} e^{-t\mathcal{P}}u\Vert_{L^2(\mathbb{R}^n)}\le \frac{C_2e^{\mu t}}{t^{1/\theta}}\ e^{\frac{1}{2}\Tr(B)t}\ \Vert u\Vert_{L^2(\mathbb{R}^n)}.
\end{equation}
Considering the operator 
\begin{equation}\label{30082018E5}
	\tilde{\mathcal P} = \mathcal{P} + \frac{1}{2}\Tr(B) + \mu,
\end{equation}
the inequality \eqref{23082018E3} can be written as
\begin{equation}\label{27082018E1}
	\forall t>0, \forall u\in L^2(\mathbb{R}^n),\quad \Vert e^{-t\tilde{\mathcal P}}u\Vert_{\mathscr{H}_k}\le\frac{C_2}{t^{1/\theta}}\ \Vert u\Vert_{L^2(\mathbb{R}^n)}.
\end{equation}
It follows from \eqref{27082018E1} and the strong continuity of the semigroup $(e^{-t\tilde{\mathcal P}})_{t\geq0}$ given by \eqref{30082018E5} and Theorem \ref{06022018T1} that for all $u\in L^2(\mathbb R^n)$, $t_0>0$ and $t>0$, we have 
$$\Vert e^{-(t+t_0)\tilde{\mathcal P}}u - e^{-t_0\tilde{\mathcal P}}u\Vert_{\mathcal H_k}
= \Vert e^{-t_0\tilde{\mathcal P}}\big(e^{-t\tilde{\mathcal P}}u - u\big)\Vert_{\mathcal H_k} \\
\le \frac{C}{t_0^{1/\theta}}\Vert e^{-t\tilde{\mathcal P}}u - u\Vert_{L^2(\mathbb R^n)}
\underset{t\rightarrow0}{\rightarrow}0.$$
This proves that for all $u\in L^2(\mathbb R^n)$, the function $t\in(0,+\infty)\mapsto e^{-t\tilde{\mathcal P}}u\in\mathcal H_k$ is continuous, and therefore measurable.
Moreover, we deduce from \eqref{19062018E1} and Corollary \ref{16032018C1} that the operator $\tilde{\mathcal P}$ equipped with the domain $D(\mathcal{P})$ is maximal accretive. According to \eqref{31072018E3}, Proposition \ref{14062020P1} shows that the following continuous inclusion holds
\begin{align}\label{26072017E2}
	D(\mathcal{P})\subset (L^2(\mathbb{R}^n),\mathscr{H}_k)_{\theta,2},
\end{align}
where $(L^2(\mathbb{R}^n),\mathscr{H}_k)_{\theta,2}$ denotes the space obtained by real interpolation. We notice that the Hilbert space $\mathscr{H}_k$ is dense in $L^2(\mathbb{R}^n)$. It follows from Corollary 4.37 in \cite{MR2523200} the following correspondence between real and complex interpolation spaces
\begin{align}\label{26072017E3}
	(L^2(\mathbb{R}^n),\mathscr{H}_k)_{\theta,2} = [L^2(\mathbb{R}^n),\mathscr{H}_k]_{\theta},
\end{align}
where $[L^2(\mathbb{R}^n),\mathscr{H}_k]_{\theta}$ stands for the space obtained by complex interpolation. In order to determine this space, we need the following result:

\begin{thm}[Theorem 4.36 in \cite{MR2523200}]\label{15062020T1} Let $(X,\langle\cdot,\cdot\rangle)$ be a Hilbert space and $A:D(A)\subset X\rightarrow X$ be a selfadjoint operator such that
$$\exists\delta>0, \forall x\in D(A),\quad\langle Ax,x\rangle\geq\delta\Vert x\Vert^2,$$
with $\Vert\cdot\Vert$ the norm associated with $\langle\cdot,\cdot\rangle$. We also consider $\alpha,\beta\in\mathbb C$ with $\Reelle\alpha\geq0$, $\Reelle\beta\geq0$. Then for every $\theta\in(0,1)$,
$$[D(A^{\alpha}),D(A^{\beta})]_{\theta} = D(A^{(1-\theta)\alpha+\theta\beta}).$$
\end{thm}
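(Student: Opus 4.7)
The idea is to reduce the interpolation identity to a scalar multiplier problem via the spectral theorem, then verify both inclusions by constructing admissible holomorphic functions in the Calder\'on sense and applying the Hadamard three-lines theorem.

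First I would set up the functional calculus. Since $A$ is selfadjoint with $A \geq \delta I > 0$, the spectral theorem provides a resolution of the identity $(E_\lambda)_{\lambda \geq \delta}$ such that $A = \int_\delta^\infty \lambda\, dE_\lambda$ and, for every $\gamma \in \mathbb{C}$ with $\Reelle \gamma \geq 0$,
$$A^\gamma = \int_\delta^\infty \lambda^\gamma\, dE_\lambda, \qquad D(A^\gamma) = \Big\{u \in X : \int_\delta^\infty \lambda^{2\Reelle\gamma}\, d\Vert E_\lambda u\Vert^2 < \infty \Big\}.$$
In particular, $D(A^\gamma)$ depends only on $\Reelle\gamma$, the operators $A^{iv}$ are unitary for $v \in \mathbb{R}$, and $\Vert A^\gamma u\Vert = \Vert A^{\Reelle\gamma} u\Vert$ on $D(A^{\Reelle\gamma})$. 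Set $\gamma_\theta = (1-\theta)\alpha + \theta\beta$, $a = \Reelle\alpha$, $b = \Reelle\beta$, so that $\Reelle\gamma_\theta = (1-\theta)a + \theta b$.

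For the inclusion $D(A^{\gamma_\theta}) \subset [D(A^\alpha), D(A^\beta)]_\theta$, given $u$ on the left, I would realize $u = F(\theta)$ with the admissible function
$$F(z) = e^{\varepsilon(z-\theta)^2}\, A^{(a-b)(z-\theta)} u, \qquad z \in \overline{S}, \quad S = \{0 < \Reelle z < 1\}, \quad \varepsilon > 0.$$
The exponent of $A$ appearing in $A^\alpha F(it)$ is $\alpha + (a-b)(it - \theta)$, whose real part equals $\Reelle\gamma_\theta$; by the spectral theorem
$$\Vert A^\alpha F(it)\Vert_X = |e^{\varepsilon(it-\theta)^2}|\, \Vert A^{\gamma_\theta} u\Vert_X,$$
and the Gaussian supplies the needed decay in $t$. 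The analogous computation holds on $\Reelle z = 1$ with $A^\beta$, while holomorphy on $S$ and continuity on $\overline{S}$ follow by dominated convergence in the spectral integral.

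For the reverse inclusion, given $u \in [D(A^\alpha), D(A^\beta)]_\theta$ and an admissible $F$ with $F(\theta) = u$, I would truncate by the spectral projection $P_n = E_{[\delta, n]}$ and apply Hadamard's three-lines theorem to the scalar function
$$\varphi_n(z) = e^{\varepsilon(z-\theta)^2}\, \langle A^{(1-z)\alpha + z\beta} P_n F(z), v\rangle, \qquad v \in X,\ \Vert v\Vert \leq 1,$$
which is bounded and holomorphic on $\overline{S}$ since $A^\gamma P_n$ is bounded for every $\gamma$. The boundary bounds $|\varphi_n(it)| \lesssim \Vert A^\alpha F(it)\Vert$ and $|\varphi_n(1+it)| \lesssim \Vert A^\beta F(1+it)\Vert$ combined with the three-lines theorem yield $|\varphi_n(\theta)| \lesssim \Vert F\Vert_{\mathcal{F}}$ uniformly in $n$; taking the supremum over $v$, the infimum over $F$, and passing to the limit $n \to \infty$ by monotone convergence concludes that $u \in D(A^{\gamma_\theta})$ with the expected norm. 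The main obstacle is the bookkeeping of Calder\'on admissibility, and in particular the careful insertion of the Gaussian $e^{\varepsilon(z-\theta)^2}$: without it, the modulus $\lambda^{-t\Imag(\beta-\alpha)}$ would prevent $F(it)$ from lying in $D(A^\alpha)$ with uniformly bounded norm whenever $\Imag(\beta-\alpha) \neq 0$, and similarly $\varphi_n$ would fail to be bounded across the strip. Once this regularization is in place, the spectral theorem reduces everything to elementary scalar estimates.
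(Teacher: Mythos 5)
The paper does not actually prove this statement: it is imported verbatim as Theorem 4.36 of Lunardi's book \cite{MR2523200} and used as a black box (moreover only with the real exponents $\alpha=0$, $\beta=1$). Your spectral-theorem/Calder\'on argument is the natural direct proof, and the forward inclusion $D(A^{(1-\theta)\alpha+\theta\beta})\subset[D(A^{\alpha}),D(A^{\beta})]_{\theta}$ is correct as sketched.

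In the reverse inclusion, however, there is a concrete gap when $\Imag\alpha\neq\Imag\beta$. On the line $\Reelle z=0$ you have $A^{(1-it)\alpha+it\beta}P_nF(it)=A^{it(\beta-\alpha)}P_nA^{\alpha}F(it)$, and $A^{it(\beta-\alpha)}P_n$ is not unitary: its norm is $\sup_{\lambda\in[\delta,n]}\lambda^{-t\Imag(\beta-\alpha)}$, which for the unfavourable sign of $t$ equals $n^{\vert t\,\Imag(\beta-\alpha)\vert}$. Hence the true boundary bound is $\vert\varphi_n(it)\vert\le e^{\varepsilon(\theta^2-t^2)}\,n^{\vert t\,\Imag(\beta-\alpha)\vert}\,\Vert A^{\alpha}F(it)\Vert\,\Vert v\Vert$: the Gaussian tames the growth in $t$ for each fixed $n$, but taking the supremum in $t$ leaves a factor of order $\exp\big(\Imag(\beta-\alpha)^2(\ln n)^2/(4\varepsilon)\big)$, so the three-lines estimate for $\varphi_n(\theta)$ is not uniform in $n$ and the final monotone-convergence step collapses. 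The repair is cheap and should be stated: since the imaginary powers $A^{iv}$ are unitary, $\Vert A^{\gamma}u\Vert=\Vert A^{\Reelle\gamma}u\Vert$ and $D(A^{\gamma})=D(A^{\Reelle\gamma})$ with equal graph norms, so the two endpoint spaces and the target space depend only on $\Reelle\alpha$ and $\Reelle\beta$; you may therefore assume $\alpha,\beta$ real from the outset (equivalently, run $\varphi_n$ with the exponent $(1-z)\Reelle\alpha+z\Reelle\beta$). Then the powers appearing on the two boundary lines are unitary, the bounds are uniform in $n$, and your argument closes. With that reduction the proof is complete; note that for the paper's application ($\alpha=0$, $\beta=1$) your sketch is already correct as written.
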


With $\mathscr{H}_k$ as the domain of the operator $\Lambda_k^{\lfloor 2s\rfloor + 1}$, we have that $\Lambda_k^{\lfloor 2s\rfloor + 1}$ is a positive selfadjoint operator satisfying
$$\forall u\in\mathscr{H}_k,\quad \langle\Lambda_k^{\lfloor 2s\rfloor + 1}u,u\rangle_{L^2(\mathbb{R}^n)}\geq\Vert u\Vert^2_{L^2(\mathbb{R}^n)}.$$ 
Thus, we deduce from Theorem \ref{15062020T1} that
\begin{equation}\label{26072017E4}
	[L^2(\mathbb{R}^n),\mathscr{H}_k]_{\theta} 
	= [D((\Lambda_k^{\lfloor2s\rfloor+1})^0),D(\Lambda_k^{\lfloor2s\rfloor+1})^1)]_{\theta}
	= D(\Lambda_k^{[\lfloor2s\rfloor+1]\theta}) 
	= D(\Lambda_k^{\frac{2s}{1+2sk}}).
\end{equation}
We therefore obtain from (\ref{26072017E2}), (\ref{26072017E3}) and (\ref{26072017E4}) that the following continuous inclusion holds
$$D(\mathcal{P})\subset D(\Lambda_k^{\frac{2s}{1+2ks}}).$$
This implies that there exists a positive constant $c_k>0$ such that
$$\forall u\in D(\mathcal{P}),\quad \Vert\Lambda_k^{\frac{2s}{1+2ks}}u\Vert_{L^2(\mathbb{R}^n)}\le c_k\left[\Vert\tilde{\mathcal P}u\Vert_{L^2(\mathbb{R}^n)} + \Vert u\Vert_{L^2(\mathbb{R}^n)}\right],$$
and we deduce from the definition of $\Lambda_k$ and \eqref{30082018E5} that
$$\forall u\in D(\mathcal{P}),\quad \Vert\langle\mathbb P_kD_x\rangle^{\frac{2s}{1+2ks}}u\Vert_{L^2(\mathbb{R}^n)}\le c_k\left[\Vert\mathcal Pu\Vert_{L^2(\mathbb{R}^n)} + \Vert u\Vert_{L^2(\mathbb{R}^n)}\right].$$
This ends the proof of Theorem \ref{28092018T1}.

\subsection{Proof of Corollary \ref{03102018C1}} By using Corollary \ref{28092018C1}, the proof of Corollary \ref{03102018C1} follows the very same arguments of interpolation theory as the ones used in the proof of Theorem \ref{28092018T1} for the spaces $L^2(\mathbb R^n)$ and
$$\mathscr{H}_k = \{u\in L^2(\mathbb{R}^n),\quad \langle Q^{\frac 12}(B^T)^kD_x\rangle^{\lfloor 2s\rfloor + 1} u\in L^2(\mathbb{R}^n)\},\quad 0\le k\le r-1.$$

\subsection{Proof of Corollary \ref{24052018C1}} Corollary \ref{24052018C1} is an immediate consequence of Theorem \ref{28092018T1}. By definition, $\mathbb P_0$ is the orthogonal projection onto the vector subspace $V_0 = \Ran Q^{\frac12}$. Moreover, the orthogonal complement of $V_0$ is given by $V_0^{\perp} = \Ker Q^{\frac12}$ since $Q^{\frac12}$ is a real symmetric semidefinite matrix. It follows that for all $\xi\in\mathbb R^n$,
\begin{equation}\label{04092018E1}
	\vert Q^{\frac12}\xi\vert^{2s} = \vert Q^{\frac12}\mathbb P_0\xi\vert^{2s}\le \Vert Q^{\frac12}\Vert^{2s}\langle\mathbb P_0\xi\rangle^{2s}.
\end{equation}
It follows from \eqref{04092018E1}, Theorem \ref{28092018T1} and the Plancherel theorem that there exists a positive constant $c>0$ such that for all $u\in D(\mathcal P)$,
\begin{multline}\label{04092018E2}
	\Vert \Tr^s(-Q\nabla^2_x)u\Vert_{L^2(\mathbb{R}^n)}
	\le \Vert Q^{\frac12}\Vert^{2s}\Vert\langle\mathbb P_0D_x\rangle^{2s}u\Vert_{L^2(\mathbb{R}^n)} \\
	\le c\Vert Q^{\frac12}\Vert^{2s}\left[\Vert \mathcal{P}u\Vert_{L^2(\mathbb{R}^n)} + \Vert u\Vert_{L^2(\mathbb{R}^n)}\right],
\end{multline}
since $\Tr^s(-Q\nabla^2_x) = \vert Q^{\frac{1}{2}}D_x\vert^{2s}$. Then, we deduce from \eqref{06022018E3} and \eqref{04092018E2} that for all $u\in D(\mathcal{P})$,
\begin{multline*}
	\Vert\langle Bx,\nabla_x\rangle u\Vert_{L^2(\mathbb{R}^n)} = \Vert \mathcal{P}u\Vert_{L^2(\mathbb{R}^n)} + \frac{1}{2}\Vert \Tr^s(-Q\nabla^2_x)u\Vert_{L^2(\mathbb{R}^n)} \\
	\le \big(1+\frac12c\Vert Q^{\frac12}\Vert^{2s}\big)\left[\Vert \mathcal{P}u\Vert_{L^2(\mathbb{R}^n)} + \Vert u\Vert_{L^2(\mathbb{R}^n)}\right].
\end{multline*}
This ends the proof of Corollary \ref{24052018C1}.

\section{Appendix}
\label{sec_appendix}

\subsection{About the Kalman rank condition} To begin this appendix, we prove the characterization of the Kalman rank condition we have used all over this work.

\begin{lem}\label{29082018E1} Let $B$ and $Q$ be real $n\times n$ matrices, with $Q$ symmetric positive semidefinite. The following assertions are equivalent : \\[5pt]
\textbf{1.} The Kalman rank condition \eqref{10052018E4} holds. \\[5pt]
\textbf{2.} There exists a non-negative integer $0\le r\le n-1$ satisfying
$$\Ker(Q^{\frac{1}{2}})\cap\Ker(Q^{\frac{1}{2}}B^T)\cap\ldots\cap\Ker(Q^{\frac{1}{2}}(B^T)^r) = \{0\}.$$
\end{lem}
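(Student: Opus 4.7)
The proof relies entirely on the duality between range and kernel under transposition, combined with the fact that $Q^{\frac{1}{2}}$ is symmetric. My plan is to first establish an orthogonality identity translating the range of the Kalman matrix into the kernel intersection, and then read off both implications.

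The starting point is the observation that for any real $n\times m$ matrix $M$, we have $\Ran(M)^{\perp}=\Ker(M^T)$. I apply this to the block matrix
$$M_k=[Q^{\frac{1}{2}},BQ^{\frac{1}{2}},\ldots,B^kQ^{\frac{1}{2}}],$$
whose range is exactly the subspace $V_k=\Ran(Q^{\frac{1}{2}})+\Ran(BQ^{\frac{1}{2}})+\ldots+\Ran(B^kQ^{\frac{1}{2}})$. Since $Q^{\frac{1}{2}}$ is symmetric, the transpose of $B^jQ^{\frac{1}{2}}$ is $Q^{\frac{1}{2}}(B^T)^j$, so an element $\xi\in\mathbb{R}^n$ lies in $\Ker(M_k^T)$ if and only if $Q^{\frac{1}{2}}(B^T)^j\xi=0$ for every $0\le j\le k$. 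This yields the key identity
$$V_k^{\perp}=\bigcap_{j=0}^{k}\Ker\big(Q^{\frac{1}{2}}(B^T)^j\big),$$
valid for every $k\geq0$, from which equivalently $V_k=\mathbb{R}^n$ if and only if the intersection above reduces to $\{0\}$.

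For the implication \textbf{1}$\Rightarrow$\textbf{2}, the Kalman rank condition \eqref{10052018E4} means precisely that $V_{n-1}=\mathbb{R}^n$, so taking $r=n-1$ in the identity above gives the desired trivial kernel intersection. For the converse \textbf{2}$\Rightarrow$\textbf{1}, I simply note that intersecting more subspaces can only shrink the intersection: if
$$\bigcap_{j=0}^{r}\Ker\big(Q^{\frac{1}{2}}(B^T)^j\big)=\{0\}$$
for some $r\le n-1$, then a fortiori
$$\bigcap_{j=0}^{n-1}\Ker\big(Q^{\frac{1}{2}}(B^T)^j\big)=\{0\},$$
since the latter is contained in the former. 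Applying the identity once more with $k=n-1$ gives $V_{n-1}=\mathbb{R}^n$, which is exactly the Kalman rank condition.

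There is no real obstacle here: the whole argument is a one-line transposition identity followed by a monotonicity observation, and in particular no appeal to Cayley-Hamilton is needed because the statement restricts $r$ to be at most $n-1$ from the outset. The only subtle point worth stating carefully is that the symmetry of $Q^{\frac{1}{2}}$ is what allows the kernels appearing in \eqref{05052018E4} to be the kernels of the transposes of the building blocks $B^jQ^{\frac{1}{2}}$ of the Kalman matrix.
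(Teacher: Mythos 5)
Your proof is correct and follows essentially the same route as the paper: both rest on the identity $\Ran(M)^{\perp}=\Ker(M^{T})$ applied to the Kalman matrix (using the symmetry of $Q^{\frac{1}{2}}$ to identify the transposed blocks as $Q^{\frac{1}{2}}(B^{T})^{j}$), which reduces the Kalman rank condition to the triviality of the kernel intersection at $r=n-1$. The only difference is that you spell out the monotonicity step handling a general $r\le n-1$, which the paper leaves implicit.
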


\begin{proof} Using the notations of \eqref{10052018E4}, we have the following equivalences :
\begin{align*}
	&\ \Rank[B\ \vert\ Q^{\frac{1}{2}}] = n \\[5pt]
	\Leftrightarrow &\ \Ran[B\ \vert\ Q^{\frac{1}{2}}] = \mathbb R^n \\[5pt]
	\Leftrightarrow &\ \Ker([B\ \vert\ Q^{\frac{1}{2}}]^T) = (\Ran[B\ \vert\ Q^{\frac{1}{2}}])^{\perp} = \{0\} \\[5pt]
	\Leftrightarrow &\ \Ker(Q^{\frac{1}{2}})\cap\Ker(Q^{\frac{1}{2}}B^T)\cap\ldots\cap\Ker(Q^{\frac{1}{2}}(B^T)^{n-1}) = \Ker([B\ \vert\ Q^{\frac{1}{2}}]^T) = \{0\},
\end{align*}
where $\perp$ denotes the orthogonality with respect to the canonical Euclidean structure. This ends the proof of Lemma \ref{29082018E1}.
\end{proof}

\subsection{Convergence in Lebesgue spaces} In a second part, we recall the following classical measure theory result concerning the convergence in $L^p(\mathbb{R}^n)$. Its proof is given here for the convenience of the reader and for the sake of completeness of this work.

\begin{lem}\label{12022018L1} Let $p\in[1,+\infty)$. We consider $(f_k)_k$ a sequence of $L^p(\mathbb{R}^n)$ and $f\in L^p(\mathbb{R}^n)$ such that $(f_k)_k$ converges to $f$ almost everywhere in $\mathbb{R}^n$. Then : 
$$\lim_{k\rightarrow+\infty}\Vert f_k-f\Vert_{L^p(\mathbb{R}^n)} \Leftrightarrow \lim_{k\rightarrow+\infty}\Vert f_k\Vert_{L^p(\mathbb{R}^n)} = \Vert f\Vert_{L^p(\mathbb{R}^n)}.$$
\end{lem}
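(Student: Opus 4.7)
The plan is to prove the two implications separately. The forward direction $(\Rightarrow)$ is immediate from the reverse triangle inequality in $L^p(\mathbb R^n)$: if $\|f_k-f\|_{L^p(\mathbb R^n)}\to 0$, then $\bigl|\|f_k\|_{L^p(\mathbb R^n)}-\|f\|_{L^p(\mathbb R^n)}\bigr|\le\|f_k-f\|_{L^p(\mathbb R^n)}\to 0$, so $\|f_k\|_{L^p(\mathbb R^n)}\to\|f\|_{L^p(\mathbb R^n)}$. No effort is required there.

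For the reverse direction $(\Leftarrow)$, which is the substantive content, I would use a Fatou-style argument applied to a carefully chosen non-negative sequence. The elementary convexity inequality $|a-b|^p\le 2^{p-1}(|a|^p+|b|^p)$, valid for all $a,b\in\mathbb C$ and $p\in[1,+\infty)$, shows that the functions
$$g_k:=2^{p-1}\bigl(|f_k|^p+|f|^p\bigr)-|f_k-f|^p$$
are non-negative on $\mathbb R^n$. Since $f_k\to f$ almost everywhere, one has $g_k\to 2^p|f|^p$ almost everywhere. Fatou's lemma then gives
$$\int_{\mathbb R^n}2^p|f|^p\,dx\le\liminf_{k\to+\infty}\int_{\mathbb R^n}g_k\,dx.$$
Expanding the right-hand side and using the hypothesis $\|f_k\|_{L^p(\mathbb R^n)}\to\|f\|_{L^p(\mathbb R^n)}$, which is equivalent to $\int|f_k|^p\to\int|f|^p$, the inequality becomes
$$2^p\int_{\mathbb R^n}|f|^p\,dx\le 2^{p-1}\cdot 2\int_{\mathbb R^n}|f|^p\,dx-\limsup_{k\to+\infty}\int_{\mathbb R^n}|f_k-f|^p\,dx,$$
so that $\limsup_{k\to+\infty}\|f_k-f\|_{L^p(\mathbb R^n)}^p\le 0$, which yields the desired convergence.

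The only delicate point is the correct choice of the dominating combination $g_k$: a naive attempt to apply the dominated convergence theorem directly to $|f_k-f|^p$ fails since no uniform domination is assumed, and a plain application of Fatou to $|f_k-f|^p$ only yields $0\le\liminf\|f_k-f\|_{L^p}^p$. The trick consists in absorbing $|f_k-f|^p$ into the non-negative combination so that the limit of the dominating terms exactly cancels the contribution of $|f|^p$; this is where the hypothesis on the convergence of norms enters in an essential way. Once this identification is made, the rest of the argument is routine.
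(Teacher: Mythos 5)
Your proof is correct and follows essentially the same route as the paper: the forward implication via the (reverse) triangle inequality, and the converse by applying Fatou's lemma to the non-negative functions $g_k = 2^{p-1}(|f_k|^p+|f|^p)-|f_k-f|^p$, whose integrals' liminf combines with the convergence of norms to force $\limsup_{k\to+\infty}\|f_k-f\|_{L^p(\mathbb{R}^n)}^p=0$. No discrepancies to report.
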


\begin{proof} We just need to prove the reciprocal implication. To that end, we consider the sequence $(g_k)_k$ of non-negative functions defined for all $k\geq0$ by
$$g_k = 2^{p-1}(\vert f_k\vert^p + \vert f\vert^p) - \vert f_k-f\vert^p\geq0.$$
Notice that the functions $g_k$ are non-negative since the following convexity inequality holds for all $x,y\in\mathbb{R}^n$,
$$(x+y)^p\le 2^{p-1}(x^p+y^p).$$
Since we have
$$\int_{\mathbb{R}^n}\liminf_{k\rightarrow\infty}g_k(x) dx = 2^p\int_{\mathbb{R}^n}\vert f(x)\vert^p dx,$$
and
$$\liminf_{k\rightarrow+\infty}\int_{\mathbb{R}^n}g_k(x) dx = 2^p\int_{\mathbb{R}^n}\vert f(x)\vert^pdx - \limsup_{k\rightarrow+\infty}\int_{\mathbb{R}^n}\vert f_k(x)-f(x)\vert^p dx,$$
it follows from the Fatou lemma that 
$$\limsup_{k\rightarrow+\infty}\int_{\mathbb{R}^n}\vert f_k(x)-f(x)\vert^p\ dx = 0.$$
This ends the proof of Lemma \ref{12022018L1}.
\end{proof}

\subsection{Some instrumental estimates} To end this appendix, we give the proof of four lemmas used in Section \ref{sec_frac_OUO} and Section \ref{sec_GreofOUs} : 

\begin{lem}\label{05022018L2} For all $r\geq1$, $q\in(0,+\infty)$ and $a_1,\ldots,a_r\in[0,+\infty)$, we have
\begin{gather}\label{17032018E1}
	(a_1+\ldots+a_r)^q\le r^{\left(q-1\right)_+}(a^q_1+\ldots+a^q_r),
\end{gather}
where
$$\left(q-1\right)_+ = \max(q-1,0).$$
\end{lem}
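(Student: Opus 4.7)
The plan is to split the proof according to whether $q\geq 1$ or $0<q\le 1$, since the exponent $(q-1)_+$ behaves differently in these two regimes.

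When $q\geq 1$, so that $(q-1)_+ = q-1$, the inequality reduces to $(a_1+\ldots+a_r)^q\le r^{q-1}(a_1^q+\ldots+a_r^q)$. This is a direct application of Jensen's inequality to the convex function $\varphi:x\in[0,+\infty)\mapsto x^q$. More precisely, writing
$$\varphi\bigg(\frac{a_1+\ldots+a_r}{r}\bigg)\le \frac{\varphi(a_1)+\ldots+\varphi(a_r)}{r},$$
and multiplying through by $r^q$ yields the desired estimate. Alternatively, this can be obtained from the power mean inequality or from H\"older's inequality applied to the vectors $(1,\ldots,1)$ and $(a_1,\ldots,a_r)$ with conjugate exponents $q$ and $q/(q-1)$.

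When $0<q\le 1$, so that $(q-1)_+ = 0$, the inequality reduces to the subadditivity relation $(a_1+\ldots+a_r)^q\le a_1^q+\ldots+a_r^q$. I would first establish the two-term version $(a+b)^q\le a^q+b^q$ for $a,b\geq 0$: assuming $a+b>0$ and setting $t=a/(a+b)\in[0,1]$, one has
$$\frac{a^q+b^q}{(a+b)^q}=t^q+(1-t)^q\geq t+(1-t)=1,$$
because $u^q\geq u$ for all $u\in[0,1]$ when $0<q\le 1$. A straightforward induction on $r$ then extends the inequality to an arbitrary number of terms.

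There is no substantial obstacle here: both cases are standard convexity/concavity arguments for $x\mapsto x^q$. The only subtlety to be careful about is to treat separately the trivial situation $a_1=\ldots=a_r=0$ (where both sides vanish) before normalizing in the two-term subadditivity step.
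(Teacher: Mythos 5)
Your proposal is correct and follows essentially the same route as the paper: for $q>1$ both use convexity (Jensen) of $x\mapsto x^q$, and for $0<q\le 1$ both rest on the fact that $u^q\geq u$ for $u\in[0,1]$ after normalizing by the sum — the paper applies this directly to all $r$ terms, while you do the two-term case and induct, which is an inessential difference. The degenerate case of all $a_j=0$ is handled in both arguments, so there is nothing to fix.
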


\begin{proof} If $a_j = 0$ for all $j\in\{1,\ldots,r\}$, the result is immediate. Therefore, we can assume that at least one of the $a_j$ is positive. \\[5pt]
\textbf{1. Case $0< q\le 1$ :} Since
$$\forall j\in\{1,\ldots,r\},\quad a_j\le a_1+\ldots+a_r,$$
we get that
$$\forall j\in\{1,\ldots,r\},\quad \frac{a_j}{a_1+\ldots+a_r}\le \left[\frac{a_j}{a_1+\ldots+a_r}\right]^q.$$
Then, \eqref{17032018E1} follows by summing up the previous inequalities for all $1\le j\le r$. \\[5pt]
\textbf{2. Case $q>1$ :} In this case, the convexity property of the function $t\mapsto t^q$ on $[0,+\infty)$ implies that
$$\frac{1}{r^q}(a_1+\ldots+a_r)^q\le\frac{1}{r}(a^q_1+\ldots+a^q_r).$$
This ends the proof of Lemma \ref{05022018L2}.
\end{proof}

\begin{lem}\label{2052018L1} For all $\xi,\eta\in\mathbb{R}^n$ and $q>0$, we have
$$2^{-(q-1)_+}\vert\xi\vert^q - \vert\eta\vert^q\le\vert\xi-\eta\vert^q,$$
where
$$\left(q-1\right)_+ = \max(q-1,0).$$
\end{lem}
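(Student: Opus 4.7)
The plan is to reduce the inequality to the triangle inequality and the previous lemma. First I would write $\xi = (\xi - \eta) + \eta$ and apply the standard triangle inequality to obtain $|\xi| \le |\xi - \eta| + |\eta|$. Raising both sides to the power $q > 0$ (which preserves the inequality since both sides are non-negative), I would get
$$|\xi|^q \le (|\xi - \eta| + |\eta|)^q.$$

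Next, the key step is to bound the right-hand side by applying Lemma \ref{05022018L2} with $r = 2$, $a_1 = |\xi - \eta|$ and $a_2 = |\eta|$. This yields
$$(|\xi - \eta| + |\eta|)^q \le 2^{(q-1)_+}\bigl(|\xi - \eta|^q + |\eta|^q\bigr).$$
Combining the two estimates gives $|\xi|^q \le 2^{(q-1)_+}(|\xi-\eta|^q + |\eta|^q)$. Multiplying through by $2^{-(q-1)_+} \in (0,1]$ and rearranging produces
$$2^{-(q-1)_+}|\xi|^q - |\eta|^q \le |\xi - \eta|^q,$$
which is exactly the claim. There is no real obstacle here; the entire argument is a one-line consequence of the triangle inequality combined with Lemma \ref{05022018L2} applied to the case of two summands.
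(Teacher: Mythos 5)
Your argument is correct and is essentially identical to the paper's proof, which likewise writes $\vert\xi\vert^q = \vert\xi-\eta+\eta\vert^q$ and invokes Lemma \ref{05022018L2} with two summands before rearranging; you merely make the intermediate triangle-inequality step explicit. Nothing to add.
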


\begin{proof} We deduce from Lemma \ref{05022018L2} that for all $\xi,\eta\in\mathbb R^n$ and $q>0$,
$$\vert\xi\vert^q = \vert\xi-\eta+\eta\vert^q\le 2^{(q-1)_+}(\vert\xi-\eta\vert^q+\vert\eta\vert^q).$$
This ends the proof of Lemma \ref{2052018L1}.
\end{proof}

\begin{lem}\label{16032018L1} For all $q>0$ and $\xi,\eta\in\mathbb{R}^n$,
$$\vert\vert\xi\vert^{q}-\vert\eta\vert^{q}\vert\le\left\{
\begin{array}{cl}
	q2^{(q-2)_+}\left(\vert\xi-\eta\vert^{q}+\min(\vert\xi\vert^{q-1},\vert\eta\vert^{q-1})\vert\xi-\eta\vert\right) & \text{when $q>1$}, \\[5pt]
	\vert\xi-\eta\vert^{q} & \text{when $0<q\le1$}.
\end{array}\right.$$
\end{lem}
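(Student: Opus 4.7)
The plan is to split on the sign of $q-1$ and handle the two cases by two different elementary devices: subadditivity of $t\mapsto t^q$ on $[0,+\infty)$ when $0<q\le1$, and the integral form of the mean value theorem when $q>1$.

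For the case $0<q\le 1$, I would assume without loss of generality that $|\xi|\ge|\eta|$. Since the map $t\mapsto t^q$ is subadditive on $[0,+\infty)$ when $0<q\le 1$ (a classical consequence of concavity together with $0^q=0$), applied to $a=|\eta|$ and $b=|\xi-\eta|$, it gives
$$|\xi|^q\le(|\eta|+|\xi-\eta|)^q\le|\eta|^q+|\xi-\eta|^q,$$
where the first inequality is the triangle inequality raised to the $q$-th power (with $q\le 1$, $t\mapsto t^q$ is nondecreasing). Symmetry then yields $\bigl||\xi|^q-|\eta|^q\bigr|\le|\xi-\eta|^q$.

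For the case $q>1$, I would introduce the auxiliary function $\phi:[0,1]\to\mathbb R$ defined by $\phi(t)=|(1-t)\eta+t\xi|^q$. A direct computation gives
$$\phi'(t)=q\,|(1-t)\eta+t\xi|^{q-2}\,\langle(1-t)\eta+t\xi,\xi-\eta\rangle,$$
so by Cauchy--Schwarz, $|\phi'(t)|\le q\,|(1-t)\eta+t\xi|^{q-1}\,|\xi-\eta|$. Writing $(1-t)\eta+t\xi=\eta+t(\xi-\eta)$ and applying the triangle inequality together with Lemma \ref{05022018L2} (with $r=2$ and exponent $q-1$), I obtain
$$|(1-t)\eta+t\xi|^{q-1}\le\bigl(|\eta|+t|\xi-\eta|\bigr)^{q-1}\le 2^{(q-2)_+}\bigl(|\eta|^{q-1}+t^{q-1}|\xi-\eta|^{q-1}\bigr).$$
Integrating $\phi'$ on $[0,1]$ and using $\int_0^1 t^{q-1}\,dt=1/q$, this yields
$$\bigl||\xi|^q-|\eta|^q\bigr|\le q\,2^{(q-2)_+}\bigl(|\eta|^{q-1}|\xi-\eta|+|\xi-\eta|^q/q\bigr)\le q\,2^{(q-2)_+}\bigl(|\xi-\eta|^q+|\eta|^{q-1}|\xi-\eta|\bigr),$$
using $q\ge 1$ to absorb the factor $1/q$. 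The same computation, carried out while writing $(1-t)\eta+t\xi=\xi+(1-t)(\eta-\xi)$ instead, produces the analogous bound with $|\xi|^{q-1}$ in place of $|\eta|^{q-1}$; taking the minimum concludes.

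The argument is essentially routine, so there is no serious obstacle; the only subtle point I anticipate is making the right choice between the bounds $|\eta|+t|\xi-\eta|$ and $|\xi|+(1-t)|\xi-\eta|$ for $|(1-t)\eta+t\xi|$, which is what produces the $\min(|\xi|^{q-1},|\eta|^{q-1})$ in the statement rather than only one of the two terms.
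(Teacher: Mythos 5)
Your proposal is correct and follows essentially the same route as the paper: subadditivity of $t\mapsto t^q$ combined with the triangle inequality for $0<q\le 1$, and for $q>1$ the integral form of the mean value theorem along the segment from $\eta$ to $\xi$, Cauchy--Schwarz, and Lemma \ref{05022018L2} with exponent $q-1$, the $\min$ coming from the symmetry of the roles of $\xi$ and $\eta$. Your explicit remark about absorbing the factor $1/q$ coming from $\int_0^1 t^{q-1}\,dt$ is a minor point the paper glosses over, but both arguments are the same in substance.
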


\begin{proof} Let $q>0$ and $\xi,\eta\in\mathbb{R}^n$. We first assume that $0<q\le 1$. It follows from Lemma \ref{05022018L2} that
$\vert\xi+\eta\vert^{q}\le\vert\xi\vert^{q} + \vert\eta\vert^{q}$ and then, by a natural change of coordinate, this implies $\vert\vert\xi\vert^{q}-\vert\eta\vert^{q}\vert\le\vert\xi-\eta\vert^{q}$.
When $q>1$, we deduce from the differentiability of the function $\vert\cdot\vert^q$ the following equality
$$\vert\xi\vert^{q}-\vert\eta\vert^{q} = \int_0^1\frac{d}{dt}\vert\eta+t(\xi-\eta)\vert^{q}\ dt
= q\int_0^1\vert\eta+t(\xi-\eta)\vert^{q-2}(\eta+t(\xi-\eta))\cdot(\xi-\eta)\ dt.$$
Then, the Cauchy-Schwarz inequality and Lemma \ref{05022018L2} imply that
\begin{align*}
	\left\vert\vert\xi\vert^{q}-\vert\eta\vert^{q}\right\vert & \le q \vert\xi-\eta\vert\int_0^1\vert\eta+t(\xi-\eta)\vert^{q-1}\ dt \\
	& \le q2^{(q-2)_+}\ \vert\xi-\eta\vert\int_0^1\left(\vert\eta\vert^{q-1}+t^{q-1}\vert\xi-\eta\vert^{q-1}\right)\ dt \\
	& = q2^{(q-2)_+}\left(\vert\xi-\eta\vert^{q} + \vert\eta\vert^{q-1}\vert\xi-\eta\vert\right).
\end{align*}
Since $\xi$ et $\eta$ play symmetric roles, the proof of Lemma \ref{16032018L1} is ended.
\end{proof}

\end{document}